\documentclass[reqno]{amsart}

\usepackage[english]{babel}

\usepackage{mathrsfs, mathtools, amssymb}

\usepackage{dsfont}

\usepackage[paper=a4paper, margin=3cm]{geometry}

\usepackage[breaklinks]{hyperref}       % hyperlinks
\usepackage{booktabs}       % professional-quality tables
\usepackage{amsfonts}       % blackboard math symbols
\usepackage{nicefrac}       % compact symbols for 1/2, etc.
\usepackage{microtype}      % microtypography
\usepackage{lipsum}

\usepackage{amsmath,amssymb,amsthm,amscd}
\usepackage{tikz,tikz-cd}
\usetikzlibrary{arrows,arrows.meta}

\usepackage{graphicx}
\graphicspath{}
\DeclareGraphicsExtensions{.pdf,.png,.jpg,.jpeg}

\usepackage{yhmath}
\usepackage{enumitem}
\usepackage{breakurl}
\usepackage{caption}
\usepackage{cleveref}
\usepackage{float}

\crefname{figure}{Figure.}{Figures.}
\Crefname{figure}{Figure.}{Figures.}

\newcounter{stmcounter}[section]

\newtheorem{formula}{}[section]
\newtheorem{proposition}[formula]{Proposition}

\newtheorem{lemma}[formula]{Lemma}
\newtheorem{theorem}[formula]{Theorem}

\theoremstyle{definition}

\newtheorem{definition}[formula]{Definition}

\theoremstyle{remark}

\newtheorem{remark}[formula]{Remark}

\renewcommand{\d}{\text{d}}

\tikzcdset{arrow style=tikz, diagrams={>=stealth}}

\catcode`,\active

\catcode`\,12

\begin{document}
	
	\title[Geometrize the nongeometric space]{Geometrize the nongeometric space}

	\author{Tran Chi Quy$^\dagger$}
	\thanks{$^\dagger$Email: 23000008@hus.edu.vn}

    \author{Sonnet Nguyen Quang Hung$^*$}
	\thanks{$^*$Email: sonnet3001@gmail.com}

	\dedicatory{Faculty of Physics, VNU University of Science, Hanoi 100000, Vietnam}
	
	\subjclass[2020]{Primary 18F20; Secondary 18N10, 16T05, 17B37, 81R60}\keywords{Stack, descent theory, nonassociative/noncommutative geometry, deformation quantization, nongeometric space}
	
\begin{abstract}
Noncommutative and nonassociative geometry has attracted much attention in both mathematics and physics communities. A prominent example is a stringy $R$-space over an underlying manifold, which exhibits both noncommutative and nonassociative structures and is known to be locally nongeometric. However, previous studies have mainly described $R$-space through algebraic relations of local string position coordinates, instead of a precise global geometric definition. In this paper, we propose a rigorous and systematic mathematical formulation of nongeometric $R$-space within the framework of descent theory. The central idea is to regard nongeometric $R$-space as carrying only local data of a quasitriangular quasi-Hopf algebra on each local coordinate domain of the underlying manifold, which are then glued coherently together to produce a global geometric object. We show that the appropriate global structure is not an ordinary space or a globally defined algebra, but a bicategory-valued stack. This provides a global and geometric definition of $R$-space while still preserving its locally nongeometric intrinsic character.
\end{abstract}

	\maketitle
    \tableofcontents
	\section*{Introduction}
    Understanding the geometry of spacetime at the quantum scale remains one of the central problems in modern theoretical physics. While general relativity describes gravity as the curvature of a pseudo-Riemannian manifold, quantum theory suggests that this geometric picture should break down at sufficiently small distances, where vacuum fluctuations are not compatible with Riemannian geometry. A consistent quantization of gravity therefore requires not only a quantum theory of the gravitational field, but also a deeper understanding of the spacetime geometry itself, which may no longer be described by the Riemannian geometry of classical gravity. Many theories have been introduced to solve the problem of quantum gravity, among which string theory stands out as the most promising candidate. Various studies in string theory have provided diverse perspectives on the geometry of spacetime at the quantum scale, revealing a range of novel mathematical structures that extend far beyond the traditional framework of Riemannian manifolds envisioned by most physicists. These alterations have, in turn, motivated deep and fruitful connections between string theory and several branches of pure mathematics. However, certain quantum geometric structures arising in string theory, such as \textit{noncommutative geometry} and \textit{nonassociative geometry}, remain conceptually and mathematically elusive, particularly concerning precise definitions. In this paper, we aim for a better understanding of the mathematical structures underlying these quantum geometries.
    
    After the success of noncommutative geometry in physics, nonassociative geometry has received attention in the physics community as a potential candidate for quantized spacetime \cite{PhysRevD.62.081501,nesterov2019nonassociative}. Within the framework of string theory, noncommutative spacetime is known to arise from modifications of the boundary conditions of \textit{open strings} on the boundary of the string worldsheet in the presence of a Neveu–Schwarz two-form field \cite{seiberg1999string,blumenhagen2014course}. Subsequent studies have further shown that analogous noncommutative geometric features can also emerge in the theory of \textit{closed string}, along with even more intricate structures, such as nonassociative geometry. The study of closed string theory is essential for stringy quantum gravity, since the excitation spectrum of closed strings contains massless bosonic fields, consisting of the spacetime metric $g$, the Neveu–Schwarz two-form field $B$, and the dilaton field $\Phi$. In other words, closed string theory encodes not only the data of spacetime itself but also the geometric structure of quantum spacetime. The noncommutativity and nonassociativity emerge from closed string flux compactifications with \textit{nongeometric fluxes}. More explicit, nongeometric fluxes can be realized from T-duality along the cycles of a higher-dimensional tori $\mathbb{T}^n$ (with $n>2$) endowed with a nonvanishing Neveu-Schwarz three-form flux $H=\d B$ by the following T-duality chain
   \[\begin{tikzcd}
       H_{ijk} \arrow{r}{T_i} & {f^i}_{jk} \arrow{r}{T_j} & {Q^{ij}}_k \arrow{r}{T_k} & {R^{ijk}}
    \end{tikzcd}\;,
    \]
    where $T_i$ (with $i=1,2,3$) denotes the T-duality transformations along the $i$-th cycle. Backgrounds that contain $Q$-flux or $R$-flux are nongeometric, respectively called $Q$-space and $R$-space (see \cite{plauschinn2019non,shelton2005nongeometric} for more detail). To be more specific, the nongeometry refers to the mixing of the original and dual coordinates, together with momentum and winding modes, due to the appearance of $Q$-flux or $R$-flux arising from T-duality transformations. Explicit calculations in string theory have shown that closed strings, which can wind and propagate in the cotangent bundle $\mathcal{M}:=T^*M$ of the spacetime $M$, effectively experience a noncommutative and nonassociative spacetime through the nonvanishing commutator and associator of space coordinate functions that can be determined via nongeometric fluxes. Analogous phenomena can also be found in condensed matter physics. Noncommutative geometry in condensed matter physics \cite{bellissard1994noncommutative,prodan2016bulk,du2024noncommutative} emerges when there are magnetic-like objects, such as external magnetic fields, Berry curvature, rotating frame, etc. On the other hand, nonassociative geometry in condensed matter physics represents the existence of objects similar to magnetic monopoles \cite{szabo2018magnetic}, which can be found in some materials, such as quantum spin ice \cite{morris09monopole}. For our interests, we focus on studying the $R$-space in this paper because of its fully nongeometric properties, including nonassociativity. However, we emphasize that our results can apply to other noncommutative and nonassociative spaces that are similar to $R$-space without necessarily being placed within the framework of string theory.

    The $R$-space is known to have no locally geometric description in terms of ordinary Riemannian geometry, since all the point-like objects on $R$-space are forbidden \cite{plauschinn2019non}. As we have just explained, the dynamics of closed strings in a spacetime $M$ on a background with $R$-flux are equivalent to an effectively noncommutative and nonassociative geometry. In an explicitly and systematically mathematical sense, these geometries are actually algebras over a coordinate domain of an underlying manifold, which can be encompassed in three-dimensional topological field theory, with the target space being a Courant algebroid \cite{roytenberg2007aksz,ikeda2003chern}. The perturbation expansion of such a membrane sigma model motivates a description of the noncommutative and nonassociative geometry within the framework of deformation quantization \cite{mylonas2014nonassociative} (also see \cite{moshayedi2022kontsevich} for Kontsevich's product). From these studies, explicit noncommutative and nonassociative products between two functions on a coordinate domain of $R$-space were obtained via Kontsevich’s formalism or, alternatively, via Drinfeld twist of the universal enveloping algebra into a quasitriangular quasi-Hopf algebra with a $\hbar$-adic topology. However, a conceptual gap remains in the existing literature. Most approaches to noncommutative and nonassociative geometry still begin with, or rely on, a globally defined quasitriangular quasi-Hopf algebra acting on a fixed space of functions or modules. In other words, previous studies of nongeometric $R$-space only considered the formalism of deformation quantization, including star products and twist, on a local coordinate domain $U \subset \mathcal{M}$ and assumed it to be true across the entire $\mathcal{M}$ via the global Hopf algebra $\mathscr{H}$, which is fixed across the entire manifold $\mathcal{M}$. Therefore, we realize that fixing a Lie algebra $\mathfrak{g}$ or fixing a Hopf algebra $\mathscr{H}$ as the universal enveloping algebra of Lie algebra $\mathfrak{g}$ over entire target space $\mathcal{M}$ is an overly strong and unnatural assumption for a nongeometric background with $R$-flux.

    Motivated by this gap, our paper introduces a global and geometric definition of the locally nongeometric $R$-space. Intuitively, we start from "small enough" regions of the target space such that within that region, a description of the $R$-space by a single Hopf algebra is acceptable. Such regions are called \textit{admissible}, in which each admissible region is assigned to a Hopf algebra. Consequently, a $R$-space over each admissible region contains the local data of the algebraic structures, including the noncommutativity and nonassociativity. Then, a $R$-space over a region that is too "large" or over the entire manifold $\mathcal{M}$ is understood as the suitable gluing of all local data. The ideas of gluing local data together and using higher structures to describe nongeometry, T-duality, and related topics have appeared in \cite{nikolaus2020higher,kim2022non}, where the authors studied nongeometric T-dualities (also see \cite{alfonsi2021puzzle} for the idea of gluing Double Field Theory patches). In mathematics, such processes are realized as \textit{descent theory}. 

    Descent—piecing together a global picture out of local pieces and gluing data—has appeared frequently in Grothendieck's philosophy and work, starting with his papers \cite{GrothendieckBourbakiDescent}. Descent theory plays an important role in many areas of pure mathematics, including algebraic geometry, algebraic topology, category theory, etc. The idea of descent theory consists of a category called a \textit{site}, a \textit{Grothendieck topology} to make the objects of a site act like open sets of a topological space, a \textit{fibered category} over a \textit{site}, local descent data, and descent conditions that ensure the local descent data can be glued together into a global object. A fibered category that satisfies descent conditions for all possible gluings is known as a \textit{stack}. In mathematics, a stack is a sheaf that takes values in categories rather than sets. Therefore, stacks can be realized as a generalization of fibered manifolds over a topological space. To achieve a stacky description, one can define the Grothendieck topology on a site, which consists of a covering family $\mathcal{U}=\{U_i\longrightarrow U\}_{i\in I}$, where $U_i$ are objects in a site. Then, one defines a pseudofunctor $\mathscr{P}$, called \textit{prestack}, that assigns to each object $U_i$ of a site, which plays the role of an open set, an object in $\mathscr{P}(U_i)$, and an isomorphism in $\mathscr{P}(U_i \times_\mathcal{U} U_j)$, even higher isomorphisms over fiber product of more than two objects of the site for higher stack. A prestack, together with its descent data, that satisfies the descent conditions becomes a stack. The process that makes a prestack become a stack is called \textit{stackification}. 

    In this research, we present a systematic construction of a global and geometric definition of the locally nongeometric $R$-space by using descent theory and stackification. Within the framework of deformation quantization, we show that the properties of $R$-space, including noncommutativity and nonassociativity, are encoded into quasitriangular quasi-Hopf algebras and their representation categories by Drinfeld's twist. We also show that the quantum geometry of $R$-space is compatible with the theory of a bicategory-valued prestack over a site of admissible coordinate domains of the target space $\mathcal{M}$, instead of a category-valued prestack due to $\hbar$-adic topology. Following descent theory, we stackify the bicategory-valued prestack into a well-defined bicategory-valued stack that is not just compatible with admissible coordinate domains, but with any open set of $\mathcal{M}$. This process is equivalent to gluing local descent data into a global object over an arbitrary union of admissible domains, which may be non-admissible. 

The paper is organized as follows. In Section \ref{sec:Preliminaries}, we review quantized universal enveloping algebras, quasitriangular quasi-Hopf algebras, and their behavior under twisting. Section \ref{sec:Representation category} studies the representation category of a quasitriangular quasi-Hopf algebra and its braided monoidal structure. Section \ref{sec:Stackification} contains the main results, where we construct the bicategory-valued prestack associated with $R$-space, define its 2-stackification, and prove the corresponding bicategorical descent theorem. Finally, Section \ref{sec:Admissible coordinate domains} provides physical arguments for the existence of admissible coordinate domains.

%%%%%%%%%%%%%%%%%%%%%%%%%%%%%%%%%%%%%%%%%%%%%%%%%%%%%%%%%%%%%%%%%%%%%%%%%%%%%%%%%%%
\section*{Summary}

In this work, we have developed a higher-categorical framework for the global geometric description of $R$-space over a manifold $\mathcal{M}$ without assuming the existence of a globally defined Hopf algebra. Our main result is the construction shown in Section \ref{sec:Stackification} and all the following proofs. Unlike conventional approaches, which typically start from a fixed global algebraic structure, our construction is based on local quasitriangular quasi-Hopf algebras assigned to admissible coordinate domains and their corresponding representation categories. We have shown that a category-valued prestack description is generally insufficient for capturing the global structure of $R$-space. The essential obstruction arises from the non-strictness of the transition data on triple overlaps. In the presence of the $\hbar$-adic topology, the compatibility of local Hopf-algebra data is governed by an infinite tower of equations, making a strict 1-categorical description unnatural. This observation motivates the passage from a category-valued prestack to a bicategory-valued prestack obtained by delooping the representation categories. Within this bicategorical framework, monoidal natural transformations appear as coherence 2-morphisms that encode the failure of strictness in local transition data. We further established that descent data associated with different admissible covering families are related by biequivalences induced from common refinements. These biequivalences define an equivalence relation among descent data and provide a covering-independent description of the global object. Based on these results, we constructed the 2-stackification of the bicategory-valued prestack and proved that it satisfies the bicategorical descent condition. Consequently, $R$-space admits a well-defined description as a bicategory-valued stack. Our result is, in some sense, analogous to the result of \cite{nikolaus2020higher}, in which the authors showed that T-backgrounds admit a description as a 2-stack. However, the two constructions differ in their underlying descent data: in \cite{nikolaus2020higher}, the 2-stack is glued from descent data consisting of a principal $\mathbb{T}^n$-bundle and a bundle gerbe, while in our work, the global stacky description of $R$-space is obtained by gluing descent data formed by representation categories of twisted universal enveloping algebras. In this formulation, the global geometry of $R$-space is not encoded by a single global Hopf algebra but rather by a coherent system of local representation categories together with higher transition data, which can be written down in a simple definition (see Theorem \ref{theorem:stackification}):
\begin{equation*}
\boxed{
R\text{-space over underlying manifold }\mathcal{M} =\dfrac{\coprod\limits_{\mathcal{U}\in\text{ACov}(\mathcal{M})} {\text{Des}_\mathscr{P}(\mathcal{U})}}{\sim_{\text{ref,bieq}}} = \mathop{2\text{-colim}}\limits_{\mathcal{U}\in\text{ACov}(\mathcal{M})}{\text{Des}_\mathscr{P}(\mathcal{U})},
}
\end{equation*}
where $\mathscr{P}$ is a pseudofunctor that assigns to each admissible coordinate domain a representation bicategory, $\text{ACov}(\mathcal{M})$ is the category of admissible covering families of manifold $\mathcal{M}$, and ${\text{Des}_\mathscr{P}(\mathcal{U})}$ is the category of $\mathscr{P}$-descent data relative to a covering family $\mathcal{U}$.

Thus, our construction provides a geometric realization of locally nongeometric $R$-space in the language of descent theory and higher geometry. It establishes a bridge between noncommutative and nonassociative structures arising from quasitriangular quasi-Hopf algebras and the theory of higher stacks. More broadly, our construction can provide a foundation for extending geometric tools, such as cohomological methods and Riemann-Roch type theorems, to a class of locally nongeometric quantum spacetimes motivated by theoretical physics. However, we leave it for future studies. In this perspective, the stack-theoretic formulation of $R$-space suggests a way to study quantum spacetime not as a classical manifold equipped with a single global algebra of functions, but as a coherent system of local quantum symmetries, higher categorical transition data, and fibered categories. This may offer a deeper geometric understanding of spacetime structures at the quantum level.

%%%%%%%%%%%%%%%%%%%%%%%%%%%%%%%%%%%%%%%%%%%%%%%%%%%%%%%%%%%%%%%%%%%%%%%%%%%%%%%%%%%
    \section{Preliminaries}\label{sec:Preliminaries}
    In this section, we revisit the theory of cocycle twist and its cochain extended version, within the framework of deformation quantization. We mainly follow \cite{Dr3,Drinfeld1990QuasiHopf,Drinfeld1991Quasitriangular,majid2000foundations} and references therein. As we mentioned before, the existence of a nonvanishing NS two-form field in string theory can lead to noncommutativity of the space coordinates, which can be realized as a finite-dimensional Lie algebra $\mathfrak{g}$ with the following commutator relation \cite{seiberg1999string,plauschinn2019non,blumenhagen2014course}
    \begin{equation}
        [x^\mu,x^\nu] = i\hbar \theta^{\mu\nu},
    \end{equation}
where $x^\mu, x^\nu$ are two different coordinates of a coordinate domain, $\theta$ is called noncommutative parameter. Consequently, the product of two differential functions is deformed into a noncommutative star product or even a nonassociative one, known by physicists as the Moyal star product and Kontsevich's product. In the framework of deformation quantization, we show that the algebra of functions in the noncommutative and nonassociative scenario can be described as a quasitriangular quasi-Hopf algebra, which is controlled by a 2-cochain twist and an associator that refers to the NS two-form field and its coboundary.

\subsection{Quantized universal enveloping algebra and formal $\hbar$-adic topology}

We start from the finite-dimensional Lie algebra $\mathfrak{g}$ over $\mathbb{C}$. The universal enveloping algebra $U(\mathfrak{g})$ is defined as the quotient of the tensor algebra $T(\mathfrak{g}) = \displaystyle\bigoplus\limits_{k \geq 0} \mathfrak{g}^{\otimes k}$ by the two-side ideal generated by $\mathcal{I}=x\otimes y - y\otimes x -[x,y]$ with $x,y \in \mathfrak{g}$. The algebra $U(\mathfrak{g})$ has a natural Hopf algebra structure with the following symmetric coalgebra structure
\begin{align}
    \Delta &: U(\mathfrak{g}) \longrightarrow U(\mathfrak{g}) \otimes U(\mathfrak{g}) \,, & \Delta(x) &= x \otimes 1 + 1 \otimes x \,, \nonumber \\[1ex]
\varepsilon &: U(\mathfrak{g}) \longrightarrow \mathbb{C} \,, & \varepsilon(x) &= 0 \,, \\[1ex]
S &: U(\mathfrak{g}) \longrightarrow U(\mathfrak{g}) \,, & S(x) &= -x,\nonumber
\end{align}
which is defined on the generators $x \in U(\mathfrak{g})$ and the unit element $1$. Here $\Delta$ is the coproduct, $S$ is the antipode, and $\varepsilon$ is the counit. This algebra is realized as an infinite-dimensional Hopf algebra and is denoted by $\mathscr{H}$. In fact, a general infinite-dimensional Hopf algebra $\mathscr{H}$ might not consist of a well-defined dual Hopf-algebra $\mathscr{H}^*$ because of the inequality $\mathscr{H}^* \otimes \mathscr{H}^*  \subseteq \left(\mathscr{H}\otimes \mathscr{H}\right)^*$. However, we can extend the notion of the universal enveloping algebra to formal power series in $\hbar$, which defines a Hopf algebra $U(\mathfrak{g})[[\hbar]]$ over $\mathbb{C}[[\hbar]]$ as an algebra complete in what is called $\hbar$-adic topology \cite{Dr3}.

\begin{definition}
    Let $\mathscr{H}$ be a topologically free $\mathbb{C}[[\hbar]]$-module and $\mathscr{H}/\hbar\mathscr{H} = U(\mathfrak{g})$ is an (infinite-dimensional) universal enveloping algebra of the Lie algebra $\mathfrak{g}$. Then $\mathscr{H}=U(\mathfrak{g})[[\hbar]]$ is an (infinite-dimensional) Hopf algebra over $\mathbb{C}[[\hbar]]$ with a symmetric coalgebra structure consisting of a $\hbar$-adic topologically complete tensor product.
\end{definition}
The universal enveloping algebra $U(\mathfrak{g})[[\hbar]]$ is an infinite-dimensional Hopf algebra with a natural \textit{invertible antipode}. 

\subsection{Quasitriangular Hopf algebra from cocycle twist}
The Hopf algebra $\left(\mathscr{H},\Delta,S,\varepsilon\right)$ can be embedded into a wider class of \textit{quasitriangular Hopf algebras} by equipping an additional quasitriangular structure $\mathcal{R}_0 \in Z^2\left(\mathscr{H}^*,\mathbb{C}[[\hbar]]\right)$.
\begin{definition}
    A quasitriangular Hopf algebra $(\mathscr{H},\mathcal{R})$ is a Hopf algebra with an invertible 2-cocycle $\mathcal{R} \in \mathscr{H} \otimes \mathscr{H}$ such that
    \begin{equation}\label{eq1.def2.1}
       \left( {\Delta  \otimes {\text{id}}} \right)\mathcal{R} = {\mathcal{R}_{13}}{\mathcal{R}_{23}}, \qquad \qquad \left( {{\text{id}} \otimes \Delta } \right)\mathcal{R} = {\mathcal{R}_{13}}{\mathcal{R}_{12}},
    \end{equation}
    \begin{equation}\label{eq2.def2.1}
        \left( {\tau  \circ \Delta } \right)a = \mathcal{R}\left( {\Delta a} \right){\mathcal{R}^{ - 1}} \qquad \qquad \forall a \in \mathscr{H},
    \end{equation}
    where $\mathcal{R} = \sum\limits_i {\mathcal{R}_i^{\left( 1 \right)} \otimes \mathcal{R}_i^{\left( 2 \right)}} $ is the Sweedler notation and 
    \begin{equation}
        {\mathcal{R}_{mn}} = \displaystyle\sum\limits_i {1 \otimes  \ldots  \otimes\underbrace{\mathcal{R}_i^{\left( 1 \right)} }_{m^{\textrm{th}} \textrm{ factor}}\otimes 1 \otimes  \ldots  \otimes\underbrace{ \mathcal{R}_i^{\left( 2 \right)}} _{n^{\textrm{th}} \textrm{ factor}}\otimes  \ldots  \otimes 1} .
    \end{equation}
    Here $\tau$ denotes the transposition map.
\end{definition}

\begin{lemma}
    If $(\mathscr{H}, \mathcal{R})$ is a quasitriangular Hopf algebra, then $\mathcal{R} \in \mathscr{H}\otimes \mathscr{H}$ obeys
    \begin{equation}\label{eq1.lemma1}
        (\varepsilon\otimes \text{id})\mathcal{R}= (\text{id}\otimes \varepsilon)\mathcal{R}=1,
    \end{equation}
    then also 
    \begin{equation}
        (S\otimes \text{id})\mathcal{R}=\mathcal{R}^{-1}, \qquad \qquad (\text{id}\otimes S)\mathcal{R}^{-1}=\mathcal{R},
    \end{equation}
    and hence $(S\otimes S)\mathcal{R}=\mathcal{R}$.
\end{lemma}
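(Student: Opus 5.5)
The plan is the standard argument from \cite{majid2000foundations}, using only the two axioms \eqref{eq1.def2.1}, the invertibility of $\mathcal{R}$, and the Hopf algebra identities for $\varepsilon$ and $S$. The second axiom \eqref{eq2.def2.1} is not needed.

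\textbf{Step 1: counit identities.} Apply $\varepsilon\otimes\text{id}\otimes\text{id}$ to the first identity $(\Delta\otimes\text{id})\mathcal{R}=\mathcal{R}_{13}\mathcal{R}_{23}$. Since $(\varepsilon\otimes\text{id})\circ\Delta=\text{id}$, the left side becomes $\mathcal{R}$, viewed in $\mathscr{H}\otimes\mathscr{H}$. On the right, $\varepsilon$ is an algebra map, so we get the product
\begin{equation*}
\bigl((\varepsilon\otimes\text{id})\mathcal{R}\bigr)_{2}\cdot\mathcal{R}=\bigl(1\otimes(\varepsilon\otimes\text{id})\mathcal{R}\bigr)\mathcal{R}.
\end{equation*}
Multiplying on the right by $\mathcal{R}^{-1}$ gives $1\otimes(\varepsilon\otimes\text{id})\mathcal{R}=1\otimes 1$, hence $(\varepsilon\otimes\text{id})\mathcal{R}=1$. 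Applying $\text{id}\otimes\text{id}\otimes\varepsilon$ to the second axiom gives $(\text{id}\otimes\varepsilon)\mathcal{R}=1$ in the same way.

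\textbf{Step 2: $(S\otimes\text{id})\mathcal{R}=\mathcal{R}^{-1}$.} Use the antipode axiom $m\circ(S\otimes\text{id})\circ\Delta=\eta\circ\varepsilon$ in the first tensor slot. Apply $(m\otimes\text{id})\circ(S\otimes\text{id}\otimes\text{id})$ to $(\Delta\otimes\text{id})\mathcal{R}=\mathcal{R}_{13}\mathcal{R}_{23}$. The left side becomes $(\varepsilon\otimes\text{id})\mathcal{R}$ placed as $1\otimes(\ldots)$, which equals $1\otimes 1$ by Step 1. For the right side, write $\mathcal{R}_{13}\mathcal{R}_{23}=\sum_{i,j}\mathcal{R}^{(1)}_i\otimes\mathcal{R}^{(1)}_j\otimes\mathcal{R}^{(2)}_i\mathcal{R}^{(2)}_j$. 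After the map it becomes
\begin{equation*}
\sum_{i,j}S(\mathcal{R}^{(1)}_i)\mathcal{R}^{(1)}_j\otimes\mathcal{R}^{(2)}_i\mathcal{R}^{(2)}_j=\bigl((S\otimes\text{id})\mathcal{R}\bigr)\mathcal{R}.
\end{equation*}
So $(S\otimes\text{id})\mathcal{R}$ is a left inverse of $\mathcal{R}$, and since $\mathcal{R}$ is invertible it equals $\mathcal{R}^{-1}$.

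The one point needing care is the order of factors. $S$ is applied on the left and $m$ concatenates in the given order, so the product lands as $(S\otimes\text{id})\mathcal{R}$ times $\mathcal{R}$ in $\mathscr{H}\otimes\mathscr{H}$, with no reversal from the anti-multiplicativity of $S$. I expect this bookkeeping to be the main (mild) obstacle.

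\textbf{Step 3: $(\text{id}\otimes S)\mathcal{R}^{-1}=\mathcal{R}$.} Use that $S$ is invertible, which the paper states for $U(\mathfrak{g})[[\hbar]]$. Then $(\text{id}\otimes S^{-1})$ behaves like an antipode for the opposite structure. A more direct route is to invert the second axiom: $(\text{id}\otimes\Delta)\mathcal{R}^{-1}=\mathcal{R}^{-1}_{12}\mathcal{R}^{-1}_{13}$. Apply $(\text{id}\otimes m)\circ(\text{id}\otimes\text{id}\otimes S)$ and the antipode axiom $m\circ(\text{id}\otimes S)\circ\Delta=\eta\circ\varepsilon$. The left side becomes $(\text{id}\otimes\varepsilon)\mathcal{R}^{-1}\otimes 1=1\otimes 1$, using Step 1 and the fact that $\text{id}\otimes\varepsilon$ is an algebra map. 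The right side becomes $\mathcal{R}^{-1}\cdot(\text{id}\otimes S)\mathcal{R}^{-1}$. Hence $(\text{id}\otimes S)\mathcal{R}^{-1}=\mathcal{R}$.

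\textbf{Step 4: $(S\otimes S)\mathcal{R}=\mathcal{R}$.} Combine the previous steps:
\begin{equation*}
(S\otimes S)\mathcal{R}=(\text{id}\otimes S)\bigl((S\otimes\text{id})\mathcal{R}\bigr)=(\text{id}\otimes S)\mathcal{R}^{-1}=\mathcal{R}.
\end{equation*}
All maps are $\hbar$-adically continuous, so these manipulations are valid on the completed tensor products.
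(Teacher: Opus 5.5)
Your proof is correct, and the factor ordering in Steps 2 and 3 is right. As you note, Step 3 needs only $(\text{id}\otimes\varepsilon)\mathcal{R}^{-1}=1$ and not the invertibility of $S$. The paper states this lemma without proof, relying on the standard references cited at the start of Section 1 (Majid's book in particular), and your derivation is essentially the standard argument given there, so there is no separate route to compare.
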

In the general case, $(\mathscr{H},\mathcal{R})$ is a noncocommutative algebra, or more precisely, a quasicocommutative algebra. The term \textit{quasicocommutative} means the cocommutativity is held only up to conjugacy by (\ref{eq2.def2.1}). The 2-cocycle $\mathcal{R}$ with transposition operator $\tau$ defines a braided monoidal category. In addition, the Hopf algebra $\mathscr{H}$ can be realized as a trivial quasitriangular Hopf algebra $\left(\mathscr{H},\mathcal{R}_0\right)$ with $\mathcal{R}_0 = 1\otimes 1$. Starting from a trivial quasitriangular Hopf algebra, one can obtain a nontrivial quasitriangular Hopf algebra by a twist $\mathcal{T}$, which is an invertible counital 2-cocycle.

\begin{theorem}\label{theorem for new quasitriangular Hopf alg}
    Let $(\mathscr{H},\mathcal{R})$ a quasitriangular Hopf algebra and $\mathcal{T}$ is a invertible counital 2-cocycle. Then exists a new quasitriangular Hopf algebra $(\mathscr{H}_\mathcal{T}, \mathcal{R}_\mathcal{T})$ defined by the same algebra and counit. The new quasitriangular Hopf algebra satisfies
\begin{equation}\label{theorem 2.4}
        \mathcal{R}_\mathcal{T}=\mathcal{T}_{21} \mathcal{R} \mathcal{T}^{-1},\qquad \Delta_\mathcal{T}(a) = \mathcal{T} \Delta(a) \mathcal{T}^{-1}, \qquad S_\mathcal{T} (a)= US(a)U^{-1}, \qquad \forall a \in \mathscr{H}_\mathcal{T},
    \end{equation}
    where $\mathcal{T}_{21} =\sum\limits_i \mathcal{T}_i^{(2)}\otimes {T}_i^{(1)}$ and $U=\sum\limits_i \mathcal{T}_i^{(1)}S\left(\mathcal{T}_i^{(2)}\right)$ is invertible.
\end{theorem}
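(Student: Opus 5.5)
This is Drinfeld's twisting theorem, and the plan is to verify the axioms directly. Write the cocycle condition as
\[
(\mathcal{T}\otimes 1)(\Delta\otimes\text{id})\mathcal{T}=(1\otimes\mathcal{T})(\text{id}\otimes\Delta)\mathcal{T},
\]
and counitality as $(\varepsilon\otimes\text{id})\mathcal{T}=(\text{id}\otimes\varepsilon)\mathcal{T}=1$.

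\textbf{Step 1 (bialgebra).} $\Delta_\mathcal{T}$ is an algebra map because it is a conjugate of the algebra map $\Delta$. For coassociativity, compute
\[
(\Delta_\mathcal{T}\otimes\text{id})\Delta_\mathcal{T}(a)=(\mathcal{T}\otimes 1)(\Delta\otimes\text{id})(\mathcal{T})\,(\Delta\otimes\text{id})\Delta(a)\,(\Delta\otimes\text{id})(\mathcal{T}^{-1})(\mathcal{T}^{-1}\otimes 1),
\]
and similarly on the other side. Since $\Delta$ is coassociative, the cocycle identity makes the two conjugating elements agree, so the two sides coincide. The counit axiom for $\Delta_\mathcal{T}$ with the unchanged $\varepsilon$ follows from counitality of $\mathcal{T}$ after applying $\varepsilon\otimes\text{id}$ or $\text{id}\otimes\varepsilon$.

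\textbf{Step 2 (antipode).} First show $U$ is invertible. The natural candidate is $U^{-1}=\sum_i S(\bar{\mathcal{T}}^{(1)}_i)\bar{\mathcal{T}}^{(2)}_i$, where $\mathcal{T}^{-1}=\sum_i\bar{\mathcal{T}}^{(1)}_i\otimes\bar{\mathcal{T}}^{(2)}_i$. The verification applies $\mu\circ(\text{id}\otimes S\otimes\text{id})$, or a variant, to both sides of the cocycle identity rewritten with $\mathcal{T}^{-1}$, and then uses the antipode axiom of $\mathscr{H}$ together with counitality. In the $\hbar$-adic setting there is a simpler route: $\mathcal{T}=1\otimes1+O(\hbar)$ gives $U=1+O(\hbar)$, so $U$ is invertible by a geometric series. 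Next, check $\mu(S_\mathcal{T}\otimes\text{id})\Delta_\mathcal{T}(a)=\varepsilon(a)1$. Expanding, the factors $S(\mathcal{T}^{(2)})\cdots\mathcal{T}^{(1)}$ rearrange, via the anti-multiplicativity of $S$, into $U^{-1}$ times the old antipode expression times $U$, which collapses by the old axiom. The mirror identity is handled analogously. I expect this step to be the main obstacle, because the bookkeeping with Sweedler indices and the cocycle identity contracted through $S$ is delicate.

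\textbf{Step 3 (quasitriangularity).} For (\ref{eq2.def2.1}),
\[
\tau\Delta_\mathcal{T}(a)=\mathcal{T}_{21}(\tau\Delta a)\mathcal{T}_{21}^{-1}=\mathcal{T}_{21}\mathcal{R}\,\Delta(a)\,\mathcal{R}^{-1}\mathcal{T}_{21}^{-1}=\mathcal{R}_\mathcal{T}\Delta_\mathcal{T}(a)\mathcal{R}_\mathcal{T}^{-1},
\]
using $\Delta(a)=\mathcal{T}^{-1}\Delta_\mathcal{T}(a)\mathcal{T}$. For the hexagon identities (\ref{eq1.def2.1}), compute $(\Delta_\mathcal{T}\otimes\text{id})\mathcal{R}_\mathcal{T}=\mathcal{T}_{12}(\Delta\otimes\text{id})(\mathcal{T}_{21}\mathcal{R}\mathcal{T}^{-1})\mathcal{T}_{12}^{-1}$. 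Expand $(\Delta\otimes\text{id})\mathcal{R}=\mathcal{R}_{13}\mathcal{R}_{23}$, and move $\mathcal{R}$ past the $\mathcal{T}$-factors using the original quasitriangular relation applied to the legs of $\mathcal{T}$; this is exactly where quasi-cocommutativity conjugates $(\Delta\otimes\text{id})\mathcal{T}_{21}$ into a permuted form. Then apply the cocycle identity, in its permuted versions, twice to regroup the result as $(\mathcal{R}_\mathcal{T})_{13}(\mathcal{R}_\mathcal{T})_{23}$. The second hexagon follows symmetrically. Finally, since $\mathcal{T}$ is counital, the Lemma's normalisation (\ref{eq1.lemma1}) transfers to $\mathcal{R}_\mathcal{T}$, which completes the check.
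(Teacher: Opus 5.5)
Your plan is the same direct verification the paper carries out: conjugation gives quasi-cocommutativity, the permuted cocycle identities together with the old quasitriangular relation give $(\Delta_\mathcal{T}\otimes\text{id})\mathcal{R}_\mathcal{T}=(\mathcal{R}_\mathcal{T})_{13}(\mathcal{R}_\mathcal{T})_{23}$, and the explicit inverse $U^{-1}=S(\bar{\mathcal{T}}^{(1)})\bar{\mathcal{T}}^{(2)}$ handles the antipode. It works, with two corrections when you write it out: in Step 3 the cocycle identity must be applied \emph{before} each quasi-cocommutativity move, because $(\Delta\otimes\text{id})\mathcal{T}_{21}$ and $(\Delta\otimes\text{id})\mathcal{T}^{-1}$ carry their coproduct in legs $1,2$, where neither $\mathcal{R}_{13}$ nor $\mathcal{R}_{23}$ acts; and in Step 2 the mechanism is not a conjugate $U^{-1}(\cdots)U$ but the inner collapse $S(\mathcal{T}^{(1)})U^{-1}\mathcal{T}^{(2)}=1$, after which the old antipode axiom leaves $\varepsilon(a)UU^{-1}$. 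Your Step 1 is more careful than the paper's, which deduces coassociativity of $\Delta_\mathcal{T}$ from that of $\Delta$ alone, whereas the cocycle identity is genuinely needed there, exactly as you use it.
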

\begin{proof}
    First, it is clear that $\Delta_\mathcal{T}$ is an algebra map because conjugation by $\mathcal{T}$ is an algebra automorphism. $\Delta$ is already coassociative. Therefore, $\Delta_\mathcal{T}$ is coassociative. Next, we show that $\mathcal{R}_\mathcal{T}$ defines a quasitriangular structure for $\mathscr{H}_\mathcal{T}$ by showing $\mathcal{R}_\mathcal{T}$ satisfies Definition 2.1. 
    \begin{equation}
        \begin{split}
            (\Delta_{\mathcal{T}}\otimes \textrm{id})\mathcal{R}_{\mathcal{T}}
&= \mathcal{T}_{12}\bigl((\Delta\otimes \textrm{id})(\tau(\mathcal{T})\mathcal{R}\mathcal{T}^{-1})\bigr)\mathcal{T}_{12}^{-1} \\
&= \mathcal{T}_{12}((\Delta\otimes \textrm{id})\tau(\mathcal{T}))((\Delta\otimes \textrm{id})\mathcal{R})((\Delta\otimes \textrm{id})\mathcal{T}^{-1})\mathcal{T}_{12}^{-1} \\
&= \mathcal{T}_{12}((\Delta\otimes \textrm{id})\tau(\mathcal{T}))\mathcal{R}_{13}\mathcal{R}_{23}((\textrm{id}\otimes \Delta)\mathcal{T}^{-1})\mathcal{T}_{23}^{-1} \\
&= \mathcal{T}_{12}((\Delta\otimes \textrm{id})\tau(\mathcal{T}))\mathcal{R}_{13}((\textrm{id}\otimes \tau\circ \Delta)\mathcal{T}^{-1})\mathcal{R}_{23}\mathcal{T}_{23}^{-1} \\
&= \mathcal{T}_{31}(\mathcal{T}^{(1)}_{(2)}\otimes \mathcal{T}^{(2)}\otimes \mathcal{T}^{(1)}_{(1)})\mathcal{R}_{13}((\textrm{id}\otimes \tau\circ \Delta)\mathcal{T}^{-1})\mathcal{R}_{23}\mathcal{T}_{23}^{-1} \\
&= \mathcal{T}_{31}\mathcal{R}_{13}(\mathcal{T}^{(1)}_{(1)}\otimes \mathcal{T}^{(2)}\otimes \mathcal{T}^{(1)}_{(2)})((\textrm{id}\otimes \tau\circ \Delta)\mathcal{T}^{-1})\mathcal{R}_{23}\mathcal{T}_{23}^{-1} \\
&= \mathcal{T}_{31}\mathcal{R}_{13}\mathcal{T}_{13}^{-1}\mathcal{T}_{32}((\textrm{id}\otimes \tau\circ \Delta)\mathcal{T})((\textrm{id}\otimes \tau\circ \Delta)\mathcal{T}^{-1})\mathcal{R}_{23}\mathcal{T}_{23}^{-1} \\
&= \mathcal{T}_{31}\mathcal{R}_{13}\mathcal{T}_{13}^{-1}\mathcal{T}_{32}\mathcal{R}_{23}\mathcal{T}_{23}^{-1} 
= (\mathcal{R}_{\mathcal{T}})_{13}(\mathcal{R}_{\mathcal{T}})_{23},
        \end{split}
    \end{equation}
    and also
    \begin{equation}
        \begin{split}
            \tau \circ \Delta_\mathcal{T} (a) = \mathcal{T}_{21} (\tau \circ\Delta(a))\mathcal{T}_{21}^{-1} = \mathcal{T}_{21}\mathcal{R}(\Delta(a))\mathcal{R}^{-1} \mathcal{T}_{21}^{-1} = \mathcal{R}_\mathcal{T}\mathcal{T}(\Delta(a))\mathcal{T}^{-1}\mathcal{R}_\mathcal{T}^{-1} = \mathcal{R}_\mathcal{T}(\Delta_\mathcal{T}(a))\mathcal{R}_\mathcal{T}^{-1}.
        \end{split}
    \end{equation}
    Similarly for all the rest. Finally, we define $U=\mathcal{T}^{(1)} S\left(\mathcal{T}^{(2)}\right)$ and $U^{-1}=S\left(\bar{\mathcal{T}}^{(1)}\right) \bar{\mathcal{T}}^{(2)}$. One can easily check that $UU^{-1}=U^{-1}U=1$. We compute
    \begin{equation}
\begin{split}
 (S_{\mathcal{T}}\otimes \textrm{id})\Delta_{\mathcal{T}}(a)
&= U\bigl(S(\mathcal{T}^{(1)}a_{(1)}\bar{\mathcal{T}}^{(1)})\bigr)U^{-1}
   \mathcal{T}^{(2)}a_{(2)}\bar{\mathcal{T}}^{(2)} \\
&= U(S \bar{\mathcal{T}}^{(1)} )(S a_{(1)})(S\mathcal{T}^{(1)})
   (S\bar{\mathcal{T}}^{\prime(1)})\bar{\mathcal{T}}^{\prime(2)}
   \mathcal{T}^{(2)}a_{(2)}\bar{\mathcal{T}}^{(2)} \\
&= U(S \bar{\mathcal{T}}^{(1)}) (S a_{(1)}) a_{(2)} \bar{\mathcal{T}}^{(2)}
   = \varepsilon(a) U U^{-1} = \varepsilon(a).
\end{split}
\end{equation}
Similarly, $(\textrm{id}\otimes S_{\mathcal{T}})\Delta_{\mathcal{T}}(a)
=\varepsilon(a)$. Thus, $S_\mathcal{T}$ is an antipode for $\mathscr{H}_\mathcal{T}$.
\end{proof}

We introduce a new class of (noncocommutative) quasitriangular Hopf algebras $\left(\mathscr{H}_\mathcal{T}, \mathcal{R}_\mathcal{T}\right)$ constructed from (cocommutative) Hopf algebras $\left(\mathscr{H},\mathcal{R}_0=1\otimes 1\right)$, where $\mathcal{R}_\mathcal{T} = \mathcal{T}_{21}\mathcal{T}^{-1}=\mathcal{T}^{-2}$. This algebra induces a twist from a commutative product on a left $\mathscr{H}$-module algebra to a noncommutative product on a left $\mathscr{H}_\mathcal{T}$-module algebra. For example, the universal enveloping algebra acts on the algebra of smooth functions over a coordinate domain $U$, denoted as untwisted $\left(\mathscr{A},\mu\right)=\left(C^\infty(U),\mu\right)$ and twisted $\left(\mathscr{A}_*,\mu_*\right)=\left(C^\infty(U),\mu_*\right)$ respectively refers to commutative product and noncommutative product between functions. The twisted algebra $\left(\mathscr{A}_*,\mu_*\right)$ is well-known as Moyal star product. 

\subsection{Quasitriangular quasi-Hopf algebra from cochain twist} 
To describe the $R$-space, we extend the noncocommutative structure of quasitriangular Hopf algebras into a class of noncocommutative noncoassociative quasitriangular quasi-Hopf algebras. By equipping an additional 3-cocycle $\phi \in Z^2\left(\mathscr{H}^*,\mathbb{C}[[\hbar]]\right)$, the 2-cocycle conditions of $\mathcal{R}$ is modified, thus, $\mathcal{R}$ is only required to be a 2-cochain.
\begin{definition}\label{def:quasibialgebra}
    A quasi-bialgebra $(\mathscr{H},\Delta,\varepsilon, \phi)$ is unital associative algebra $(\mathscr{H},\mu,\eta)$ with a dual algebraic structure of coalgebra $(\mathscr{H},\Delta,\varepsilon)$. The coalgebra is not necessarily a coassociative coalgebra with an invertible 3-cocycle $\phi \in \mathscr{H}\otimes\mathscr{H}\otimes\mathscr{H}$ and satisfies the following identities
    \begin{equation}
        \begin{split}
            \left( {\text{id}  \otimes \Delta} \right)\Delta \left( a \right) = \phi \left( {\left( {\Delta \otimes \text{id} } \right)\Delta \left( a \right)} \right){\phi ^{ - 1}}\qquad\qquad\forall a \in \mathscr{H},\\
             \left[ {\left( {\textrm{id} \otimes \textrm{id} \otimes \Delta } \right)\left( \phi  \right)} \right]\left[ {\left( {\Delta  \otimes \textrm{id} \otimes \textrm{id}} \right)\left( \phi  \right)} \right] = \left( {1 \otimes \phi } \right)\left[ {\left( {\textrm{id} \otimes \Delta  \otimes \textrm{id}} \right)\left( \phi  \right)} \right]\left( {\phi  \otimes 1} \right),\\
              \left( {\varepsilon  \otimes \text{id}} \right)\Delta \left( a \right) = a = \left( {\text{id} \otimes \varepsilon } \right)\Delta \left( a \right) \qquad\qquad \forall a \in \mathscr{H},\\
               \left( {\text{id} \otimes \varepsilon  \otimes \text{id}} \right)\left( \phi  \right) =\left( { \varepsilon  \otimes \text{id}\otimes \text{id}} \right)\left( \phi  \right) =\left( {\text{id} \otimes\text{id}\otimes \varepsilon } \right)\left( \phi  \right) = 1\otimes 1.
        \end{split}
    \end{equation}
\end{definition}
\begin{definition}
    A quasi-Hopf algebra $(\mathscr{H}, \Delta, \varepsilon, S, \phi, \alpha, \beta)$ is a quasi-bialgebra with an antipode be a triple $(S,\alpha,\beta)$, such that $\alpha,\beta \in \mathscr{H}$, $S$ is bijective and satisfies the following identities
    \begin{equation}
        \begin{split}
            &\sum\limits_i {S\left( {{a^{(1)}_i}} \right)\alpha {a^{(2)}_i}}  = \varepsilon \left( a \right)\alpha \qquad \sum\limits_i {{a^{(1)}_i}\beta S\left( {{a^{(2)}_i}} \right)}  = \varepsilon \left( a \right)\beta \qquad\quad \forall a \in \mathscr{H},\\
        &\sum\limits_i {{\phi^{{(1)}}_i}\beta S\left( {{\phi^{{(2)}}_i}} \right)\alpha {\phi^{{(3)}}_i}}  = 1 = \sum\limits_j {S\left( {{\bar{\phi}^{{(1)}}_j}} \right)\alpha {\bar{\phi}^{{(2)}}_j}\beta S\left( {{\bar{\phi}^{{(3)}}_j}} \right)},
        \end{split}
    \end{equation}
    where the Sweedler notations are $\Delta(a)=a^{(1)} \otimes a^{(2)}$, $\phi  = {{\phi^{(1)}} \otimes {\phi^{(2)}} \otimes {\phi^{(3)}}} $, and ${\phi ^{ - 1}} =  {{\bar{\phi}^{{(1)}}} \otimes {\bar{\phi}^{{(2)}}} \otimes {\bar{\phi}^{{(3)}}}} $.
\end{definition}
In this algebra, the coassociativity is
required to hold only up to a 3-cocycle $\phi$. The appearance of a 3-cocycle $\phi$ modifies the axioms (\ref{eq1.def2.1}), which now only constrains $\mathcal{R}$ to be a 2-cochain. 
\begin{definition}\label{def:quasitriangular quasi-Hopf}
    A quasitriangular quasi-Hopf algebra is a quasi-Hopf algebra with an invertible counital 2-cochain $\mathcal{R} \in \mathscr{H}\otimes \mathscr{H} $ obeying
    \begin{equation}
        (\Delta \otimes \textrm{id})\mathcal{R} = \phi_{312} \mathcal{R}_{13}\phi_{132}^{-1}\mathcal{R}_{23}\phi, \qquad (\textrm{id}\otimes \Delta )\mathcal{R}=\phi_{231}^{-1}\mathcal{R}_{13}\phi_{213}\mathcal{R}_{12}\phi^{-1}.
    \end{equation}
\end{definition}
In case we also relax the 2-cocycle conditions of the twist $\mathcal{T}$, a new class of quasitriangular quasi-Hopf algebra appears, similar to Theorem \ref{theorem for new quasitriangular Hopf alg}.

\begin{theorem}\label{theorem:twist of quasitri quasi-Hopf}
    Let $(\mathscr{H},\Delta,\varepsilon,S,\phi,\mathcal{R},\alpha,\beta)$ be a quasitriangular quasi-Hopf algebra and let $\mathcal{T}\in \mathscr{H}\otimes \mathscr{H}$ be an invertible counital 2-cochain that satisfies the normalization condition $(\varepsilon \otimes \textrm{id})\mathcal{T}=( \textrm{id}\otimes \varepsilon)\mathcal{T}=1$. Then $(\mathscr{H}_\mathcal{T},\Delta_\mathcal{T},\varepsilon,S_\mathcal{T},\phi_\mathcal{T},\mathcal{R}_\mathcal{T},\alpha_\mathcal{T},\beta_\mathcal{T})$ is a quasitriangular quasi-Hopf algebra where
    \begin{equation}
        \begin{split}
            \Delta_\mathcal{T}(a)=\mathcal{T} \Delta(a)\mathcal{T}^{-1} \qquad \qquad \forall a \in \mathscr{H}, \\
            \phi_\mathcal{T} = \mathcal{T}_{23}\left((\textrm{id}\otimes\Delta) \mathcal{T}\right)\phi \left((\Delta \otimes \textrm{id})\mathcal{T}^{-1}\right)\mathcal{T}_{12}^{-1}, \qquad \qquad S_\mathcal{T}=S, \\
            \alpha_\mathcal{T}= \sum\limits_i \left(S\bar{\mathcal{T}}^{(1)}\right)\alpha \bar{\mathcal{T}}^{(2)}, \qquad \qquad\beta_\mathcal{T}= \sum\limits_i \mathcal{T}^{(1)}\beta \left(S {\mathcal{T}}^{(2)}\right).
        \end{split}
    \end{equation}
\end{theorem}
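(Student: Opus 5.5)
The plan is to verify, one by one, the axioms of Definitions \ref{def:quasibialgebra}, the quasi-Hopf antipode axioms, and Definition \ref{def:quasitriangular quasi-Hopf} for the twisted data. As in Theorem \ref{theorem for new quasitriangular Hopf alg}, the quasitriangular structure is taken to be $\mathcal{R}_\mathcal{T}=\mathcal{T}_{21}\mathcal{R}\mathcal{T}^{-1}$, since this is the only natural choice and the statement leaves it implicit.

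\textbf{Quasi-bialgebra axioms.} Since conjugation by the invertible $\mathcal{T}$ is an algebra automorphism of $\mathscr{H}\otimes\mathscr{H}$, $\Delta_\mathcal{T}$ is an algebra map.

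For quasi-coassociativity, I compute
\begin{equation*}
(\textrm{id}\otimes\Delta_\mathcal{T})\Delta_\mathcal{T}(a)=\mathcal{T}_{23}\bigl((\textrm{id}\otimes\Delta)\mathcal{T}\bigr)\bigl((\textrm{id}\otimes\Delta)\Delta(a)\bigr)\bigl((\textrm{id}\otimes\Delta)\mathcal{T}^{-1}\bigr)\mathcal{T}_{23}^{-1}.
\end{equation*}
Next I substitute $(\textrm{id}\otimes\Delta)\Delta(a)=\phi\bigl((\Delta\otimes\textrm{id})\Delta(a)\bigr)\phi^{-1}$. Then I rewrite $(\Delta\otimes\textrm{id})\Delta(a)$ as $\bigl((\Delta\otimes\textrm{id})\mathcal{T}^{-1}\bigr)\mathcal{T}_{12}^{-1}\,\bigl((\Delta_\mathcal{T}\otimes\textrm{id})\Delta_\mathcal{T}(a)\bigr)\,\mathcal{T}_{12}\bigl((\Delta\otimes\textrm{id})\mathcal{T}\bigr)$. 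This reads off exactly $\phi_\mathcal{T}$ and $\phi_\mathcal{T}^{-1}$ on either side.

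The counit axioms follow from applying $\varepsilon$ in any leg and using the normalization $(\varepsilon\otimes\textrm{id})\mathcal{T}=(\textrm{id}\otimes\varepsilon)\mathcal{T}=1$. This gives $(\textrm{id}\otimes\varepsilon\otimes\textrm{id})\phi_\mathcal{T}=1\otimes1$, and similarly for the other two legs.

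The pentagon for $\phi_\mathcal{T}$ is the step I expect to be the main obstacle, purely from bookkeeping. I would expand both sides $(\textrm{id}^{\otimes2}\otimes\Delta_\mathcal{T})\phi_\mathcal{T}\,(\Delta_\mathcal{T}\otimes\textrm{id}^{\otimes2})\phi_\mathcal{T}$ and $(1\otimes\phi_\mathcal{T})(\textrm{id}\otimes\Delta_\mathcal{T}\otimes\textrm{id})\phi_\mathcal{T}(\phi_\mathcal{T}\otimes1)$. In each, every $\Delta_\mathcal{T}$ is replaced by $\mathcal{T}$-conjugation of $\Delta$, and each iterated coproduct of $\mathcal{T}$ is written as a 4-leg element. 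The strategy is to show that both sides equal
\begin{equation*}
\mathcal{F}_{\mathrm{right}}\,\Bigl[(\textrm{id}^{\otimes2}\otimes\Delta)\phi\,(\Delta\otimes\textrm{id}^{\otimes2})\phi\Bigr]\,\mathcal{F}_{\mathrm{left}}^{-1},
\end{equation*}
respectively with the right-hand pentagon expression for $\phi$ in the brackets. Here $\mathcal{F}_{\mathrm{right}}=\mathcal{T}_{34}\,\mathcal{T}_{2,34}\,\mathcal{T}_{1,234}$ is the ``right-bracketed'' 4-fold twist and $\mathcal{F}_{\mathrm{left}}=\mathcal{T}_{12}\,\mathcal{T}_{12,3}\,\mathcal{T}_{123,4}$ is the left-bracketed one.

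The middle factors $\mathcal{T}_{23}$, $(\textrm{id}\otimes\Delta\otimes\textrm{id})(\ldots)$ telescope. The point needing care is moving a twist factor past $\phi$. There one uses quasi-coassociativity, $\bigl((\textrm{id}\otimes\Delta)\Delta\otimes\textrm{id}\bigr)(x)\,\phi_{123}=\phi_{123}\bigl((\Delta\otimes\textrm{id})\Delta\otimes\textrm{id}\bigr)(x)$, applied to $x=\mathcal{T}$, and its analogues. After this the untwisted pentagon finishes the argument.

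\textbf{Antipode and quasitriangularity.} For the antipode with $S_\mathcal{T}=S$, I check
\begin{equation*}
S(a_{(1)}^\mathcal{T})\,\alpha_\mathcal{T}\,a_{(2)}^\mathcal{T}=S(\bar{\mathcal{T}}^{(1)})S(a_{(1)})S(\mathcal{T}^{(1)})S(\bar{\mathcal{T}}'^{(1)})\,\alpha\,\bar{\mathcal{T}}'^{(2)}\mathcal{T}^{(2)}a_{(2)}\bar{\mathcal{T}}^{(2)}.
\end{equation*}
The middle block $\mathcal{T}^{-1}\mathcal{T}$ collapses via the original antipode axiom applied to the element $\mathcal{T}^{-1}\mathcal{T}=1$; more precisely, one applies the axiom to $\mathcal{T}^{-1}$ and $\mathcal{T}$ jointly, using that $S$ is an antimorphism. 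The remaining expression reduces to $\varepsilon(a)\alpha_\mathcal{T}$, and symmetrically for $\beta_\mathcal{T}$.

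The two normalizations $\phi_\mathcal{T}^{(1)}\beta_\mathcal{T}S(\phi_\mathcal{T}^{(2)})\alpha_\mathcal{T}\phi_\mathcal{T}^{(3)}=1$ and its mirror follow by the same insertion and cancellation. The $\mathcal{T}$-factors of $\phi_\mathcal{T}$ pair against the ones in $\alpha_\mathcal{T},\beta_\mathcal{T}$ through the antipode axioms, leaving the original identity for $\phi$.

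Finally, quasitriangularity is checked as in Theorem \ref{theorem for new quasitriangular Hopf alg}. The relation $\tau\circ\Delta_\mathcal{T}=\mathcal{R}_\mathcal{T}\Delta_\mathcal{T}(\,\cdot\,)\mathcal{R}_\mathcal{T}^{-1}$ is the one-line computation already given there. For the hexagon $(\Delta_\mathcal{T}\otimes\textrm{id})\mathcal{R}_\mathcal{T}=(\phi_\mathcal{T})_{312}(\mathcal{R}_\mathcal{T})_{13}(\phi_\mathcal{T})_{132}^{-1}(\mathcal{R}_\mathcal{T})_{23}\phi_\mathcal{T}$, I would expand the right-hand side, substitute the twisted associators, and use the untwisted hexagon. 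The crossing relation $\bigl((\textrm{id}\otimes\tau\circ\Delta)x\bigr)\mathcal{R}_{23}=\mathcal{R}_{23}(\textrm{id}\otimes\Delta)x$, applied to $x=\mathcal{T}$, cancels the permuted $(\textrm{id}\otimes\Delta)\mathcal{T}$ factors against each other. The second hexagon is handled the same way.
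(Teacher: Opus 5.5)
Your proposal is correct and follows essentially the paper's route. The pentagon for $\phi_\mathcal{T}$ is the only step the paper's appendix actually works out, in Majid's coface notation $\phi_\mathcal{T}=(\text{d}_+\mathcal{T})\phi(\text{d}_-\mathcal{T}^{-1})$, and it is the same computation as yours: expand both sides, cancel the inner twist factors, move twist legs past $\phi$ by quasi-coassociativity, and finish with the untwisted pentagon. You go further than the paper by explicitly checking the normalizations for $\alpha_\mathcal{T},\beta_\mathcal{T}$ and the two hexagons, which the paper only calls similar to Theorem~\ref{theorem for new quasitriangular Hopf alg}. One small correction there: the middle block $S(\mathcal{T}^{(1)})S(\bar{\mathcal{T}}'^{(1)})\alpha\bar{\mathcal{T}}'^{(2)}\mathcal{T}^{(2)}$ collapses to $\alpha$ simply because $\mathcal{T}^{-1}\mathcal{T}=1\otimes 1$ and $S$ is antimultiplicative, and the antipode axiom is then applied to $a$, not to $\mathcal{T}^{-1}\mathcal{T}$.
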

\begin{proof}
    See Appendix \ref{appendix 1}.
\end{proof}

\begin{remark}\label{rmk:quasitri quasi-Hopf alg from twist}
    If we consider a Hopf algebra $\mathscr{H}$, which can be understood as a trivial quasitriangular quasi-Hopf algebra $\left(\mathscr{H},\phi_0,\mathcal{R}_0,\alpha_0,\beta_0\right)$ with $\mathcal{R}_0=1\otimes 1$ and $\phi = 1\otimes 1 \otimes 1$, a nontrivial quasitriangular quasi-Hopf algebra $\left(\mathscr{H}_\mathcal{T},\phi_\mathcal{T},\mathcal{R}_\mathcal{T},\alpha_\mathcal{T},\beta_\mathcal{T}\right)$ can be obtained from a 2-cochain twist $\mathcal{T}$, with the following structures
\begin{equation}
   \begin{split}
        \mathcal{R}_\mathcal{T} = \mathcal{T}_{21}\mathcal{T}^{-1} \in C^2\left(\mathscr{H}^*_\mathcal{T},\mathbb{C}[[\hbar]]\right), \qquad \qquad \phi_\mathcal{T} = \text{d}\mathcal{T} \in B^3 \left(\mathscr{H}^*_\mathcal{T},\mathbb{C}[[\hbar]]\right),\\
        S_\mathcal{T} = S, \qquad\qquad \alpha_\mathcal{T} = \mu \circ (S\otimes \text{id})(\mathcal{T}^{-1}), \qquad \beta_\mathcal{T} = \mu \circ (\text{id} \otimes S)(\mathcal{T}) = \alpha_\mathcal{T}^{-1} 
   \end{split}
\end{equation}
This is what we mentioned as \textit{cochain twist}. 
\end{remark}

If we consider that the starting point is a quasitriangular quasi-Hopf algebra $\left(\mathscr{H}_\mathcal{T},\text{d}\mathcal{T},\mathcal{T}_{21}\mathcal{T}^{-1},\alpha_\mathcal{T},\beta_\mathcal{T}\right)$, which is actually the universal enveloping algebra, the algebra of smooth functions on a coordinate domain is a left $\mathscr{H}_\mathcal{T}$-module algebra $\left(\mathscr{A}_\star,\star\right)$ with the product $\mu_\star: \mathscr{A}_\star \otimes \mathscr{A}_\star \longrightarrow \mathscr{A}_\star$ that is noncommutative and nonassociative in common sense. By taking the product of three elements and then rebracketing, we can prove $(a\star b)\star c \neq a\star (b\star c)$.

\begin{proposition}
    For all $a,b,c \in (\mathscr{A}_\star,\star)$,
    \begin{equation}\label{nonassociative product}
        (a\star b)\star c = \left(\phi_\mathcal{T}^{(1)} \triangleright a\right) \star \left[\left(\phi_\mathcal{T}^{(2)} \triangleright b\right) \star\left(\phi_\mathcal{T}^{(3)} \triangleright c\right) \right],
    \end{equation}
    where $\phi_\mathcal{T} = \phi_\mathcal{T}^{(1)}\otimes \phi_\mathcal{T}^{(2)}\otimes \phi_\mathcal{T}^{(3)}$.
\end{proposition}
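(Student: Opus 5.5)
The plan is to realize the star product as the twist of the pointwise product and push the rebracketing through the twist. Concretely, on the untwisted side $(\mathscr{A},\mu)$ is a left $\mathscr{H}$-module algebra for the cocommutative, coassociative Hopf algebra $\mathscr{H}=U(\mathfrak{g})[[\hbar]]$. I will define
\begin{equation*}
a\star b := \mu\circ\mathcal{T}^{-1}\triangleright(a\otimes b) = \left(\bar{\mathcal{T}}^{(1)}\triangleright a\right)\left(\bar{\mathcal{T}}^{(2)}\triangleright b\right).
\end{equation*}
The first step is the covariance of $\mu_\star$ under $\Delta_\mathcal{T}$:
\begin{equation*}
h\triangleright(a\star b)=\mu\circ\left(\Delta(h)\mathcal{T}^{-1}\right)\triangleright(a\otimes b)=\mu_\star\circ\Delta_\mathcal{T}(h)\triangleright(a\otimes b).
\end{equation*}
This uses only that $\mathscr{A}$ is an $\mathscr{H}$-module algebra together with $\Delta_\mathcal{T}=\mathcal{T}\Delta\mathcal{T}^{-1}$ from Theorem \ref{theorem:twist of quasitri quasi-Hopf}. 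It shows that $(\mathscr{A}_\star,\star)$ is a module algebra over $\mathscr{H}_\mathcal{T}$ in the quasi-Hopf sense, and this is the form of the statement I will prove.

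Next I compute both bracketings directly. For the left bracketing, applying covariance with $h=\bar{\mathcal{T}}^{(1)}$, or simply expanding, gives
\begin{equation*}
(a\star b)\star c=\mu\circ(\mu\otimes\textrm{id})\circ\Big[\left((\Delta\otimes\textrm{id})\mathcal{T}^{-1}\right)\mathcal{T}^{-1}_{12}\Big]\triangleright(a\otimes b\otimes c).
\end{equation*}
Similarly,
\begin{equation*}
a\star(b\star c)=\mu\circ(\textrm{id}\otimes\mu)\circ\Big[\left((\textrm{id}\otimes\Delta)\mathcal{T}^{-1}\right)\mathcal{T}^{-1}_{23}\Big]\triangleright(a\otimes b\otimes c).
\end{equation*}
Since $\mu$ is associative, the two triple multiplications $\mu\circ(\mu\otimes\textrm{id})$ and $\mu\circ(\textrm{id}\otimes\mu)$ coincide.

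The key step is to insert $\phi_\mathcal{T}$. By Remark \ref{rmk:quasitri quasi-Hopf alg from twist}, with $\phi=1$ we have
\begin{equation*}
\phi_\mathcal{T}=\mathcal{T}_{23}\left((\textrm{id}\otimes\Delta)\mathcal{T}\right)\left((\Delta\otimes\textrm{id})\mathcal{T}^{-1}\right)\mathcal{T}^{-1}_{12}.
\end{equation*}
Rearranging gives
\begin{equation*}
\left((\Delta\otimes\textrm{id})\mathcal{T}^{-1}\right)\mathcal{T}_{12}^{-1}=\left((\textrm{id}\otimes\Delta)\mathcal{T}^{-1}\right)\mathcal{T}_{23}^{-1}\,\phi_\mathcal{T}.
\end{equation*}
Substituting this into the expression for $(a\star b)\star c$ and comparing with the one for $a\star(b\star c)$ yields
\begin{equation*}
(a\star b)\star c=\mu_\star\circ(\textrm{id}\otimes\mu_\star)\circ\phi_\mathcal{T}\triangleright(a\otimes b\otimes c),
\end{equation*}
which is exactly (\ref{nonassociative product}).

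The main obstacle is bookkeeping rather than conceptual. The identity must be arranged so that $\phi_\mathcal{T}$ sits on the right, that is, it acts first on the inputs; this is consistent with the paper's convention for $\phi_\mathcal{T}$. I also have to check that the action of $(\Delta\otimes\textrm{id})\mathcal{T}^{-1}$ on $a\otimes b\otimes c$ followed by $\mu\otimes\textrm{id}$ matches acting by $\mathcal{T}^{-1}$ on $(ab)\otimes c$. This is precisely the module-algebra property $h\triangleright(ab)=(h_{(1)}\triangleright a)(h_{(2)}\triangleright b)$ for the untwisted structure. Convergence in the $\hbar$-adic topology is automatic, since all expressions are formal power series and the action extends $\mathbb{C}[[\hbar]]$-linearly.
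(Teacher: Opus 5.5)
Your proposal is correct and follows essentially the same route as the paper. Both arguments write $(a\star b)\star c$ and $a\star(b\star c)$ as the triple product $\mu^{(3)}$ applied to $X=((\Delta\otimes\textrm{id})\mathcal{T}^{-1})\mathcal{T}^{-1}_{12}$ and $Y=((\textrm{id}\otimes\Delta)\mathcal{T}^{-1})\mathcal{T}^{-1}_{23}$ acting on $a\otimes b\otimes c$, and then conclude from the rearranged definition of $\phi_\mathcal{T}$ that $X=Y\phi_\mathcal{T}$. Your extra covariance step $h\triangleright(a\star b)=\mu_\star\circ\Delta_\mathcal{T}(h)\triangleright(a\otimes b)$ is correct but is not needed for the identity itself.
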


\begin{proof}
    Let untwisted algebra $(\mathscr{A}, \mu)$ be a left $\mathscr{H}$-module algebra. The star product $\star$ is defined using the inverse twist $\mathcal{T}^{-1} = \bar{f}^1 \otimes \bar{f}^2 \in \mathscr{H} \otimes \mathscr{H}$ as:
\begin{equation}
a \star b = \mu \left( \mathcal{T}^{-1} \triangleright (a \otimes b) \right) = (\bar{f}^1 \triangleright a)(\bar{f}^2 \triangleright b)
\end{equation}
The associator $\phi_\mathcal{T} = \phi_{\mathcal{T}}^{(1)} \otimes \phi_{\mathcal{T}}^{(2)} \otimes \phi_{\mathcal{T}}^{(3)}$ is defined by:
\begin{equation}
\phi_\mathcal{T} = (1 \otimes \mathcal{T}) (\text{id} \otimes \Delta)(\mathcal{T}) (\Delta \otimes \text{id})(\mathcal{T}^{-1}) (\mathcal{T}^{-1} \otimes 1)
\end{equation}
which is equivalent to the operator identity:
\begin{equation}
(\text{id} \otimes \Delta)(\mathcal{T}^{-1}) (1 \otimes \mathcal{T}^{-1}) \cdot \phi_\mathcal{T} = (\Delta \otimes \text{id})(\mathcal{T}^{-1}) (\mathcal{T}^{-1} \otimes 1)
\end{equation}
We compute $(a \star b) \star c$ by applying the definition of $\star$ twice:
\begin{align*}
\text{LHS}=(a \star b) \star c = \bar{g}^1 \triangleright (a \star b) \cdot \bar{g}^2 \triangleright c = \bar{g}^1 \triangleright [(\bar{f}^1 \triangleright a)(\bar{f}^2 \triangleright b)] \cdot \bar{g}^2 \triangleright c
\end{align*}
Using the property of the $H$-module action $h \triangleright (xy) = (h_{(1)} \triangleright x)(h_{(2)} \triangleright y)$:
\begin{equation}
\text{LHS}=(a \star b) \star c = (\bar{g}^1_{(1)} \bar{f}^1 \triangleright a) (\bar{g}^1_{(2)} \bar{f}^2 \triangleright b) (\bar{g}^2 \triangleright c)
\end{equation}
This corresponds to the operator $X = (\Delta \otimes \text{id})(\mathcal{T}^{-1}) (\mathcal{T}^{-1} \otimes 1)$ acting on $a \otimes b \otimes c$. The RHS of (\ref{nonassociative product}) is $(\phi_{\mathcal{T}}^{(1)} \triangleright a) \star [(\phi_{\mathcal{T}}^{(2)} \triangleright b) \star (\phi_{\mathcal{T}}^{(3)} \triangleright c)]$. Let $x = \phi_{\mathcal{T}}^{(1)} \triangleright a$, etc. The term $x \star (y \star z)$ expands as:
\begin{equation}
\text{RHS}=x \star (y \star z) = (\bar{g}^1 \triangleright x) (\bar{g}^2_{(1)} \bar{f}^1 \triangleright y) (\bar{g}^2_{(2)} \bar{f}^2 \triangleright z)
\end{equation}
This corresponds to the operator $Y = (\text{id} \otimes \Delta)(\mathcal{T}^{-1}) (1 \otimes \mathcal{T}^{-1})$ acting on $x\otimes y \otimes z$.
Applying this to the elements shifted by $\phi_\mathcal{T}$, the total operator is $Y \phi_\mathcal{T}$ acting on $a\otimes b \otimes c$.
From the definition of $\phi_\mathcal{T}$, we have $X = Y  \phi_\mathcal{T}$. Therefore:
\begin{equation}
\text{LHS}=(a \star b) \star c = \mu^{(3)} (X \triangleright (a \otimes b \otimes c)) = \mu^{(3)} (Y  \phi_\mathcal{T} \triangleright (a \otimes b \otimes c)) = x \star (y \star z) = \text{RHS},
\end{equation}
where $\mu^{(3)} (a\otimes b\otimes c) = abc$ is the ordinary product of three elements in the untwisted algebra $(\mathscr{A},\mu)$. Under the cochain twist $\mathcal{T}$, the algebra $(\mathscr{A},\mu)$ becomes a twisted one $(\mathscr{A}_\star,\star)$.
\end{proof}

In summary, within the framework of an $R$-space, the existence of a noncommutative parameter and an $R$-flux corresponds to a 2-cocycle twist and a 3-coboundary, respectively.

%%%%%%%%%%%%%%%%%%%%%%%%%%%%%%%%%%%%%%%%%%%%%%%%%%%%%%%%%%%%%%%%%%%%%%%%%%%%%%%%%%%
\section{Representation category}\label{sec:Representation category}
In order to construct a geometric description of the $R$-space systematically, we are typically interested in the representations of the quasitriangular quasi-Hopf algebras that refer to the algebra of functions on a coordinate domain. The representation category of quasitriangular quasi-Hopf algebras has been studied in \cite{barnes2015nonassociative} and \cite[Chapter~9]{majid2000foundations} (also see \cite[Chapter~5]{etingof2015tensor} for category of finite representations). Thus, this section mainly repeats the results in \cite{barnes2015nonassociative}, which studied the category of representations of the quasi-Hopf algebra and its quasitriangular version. 

\subsection{Monoidal structure and braiding}
\begin{definition}
    A monoidal category is $\left(\mathcal{C},\otimes,\mathbb{I},\Phi,l,r\right)$, where $\mathcal{C}$ is a category with an unit object $\mathbb{I}$, $\otimes: \mathcal{C}\times \mathcal{C}\longrightarrow \mathcal{C}$ is an associative functor up to a natural equivalence $\Phi_{V,W,Z}:(V\otimes W)\otimes Z \mathop{\longrightarrow} \limits^{\cong}   V\otimes(W\otimes Z)$ obeying the pentagon condition (\cref{fig:pentagon condition}), and functorial isomorphisms $l_V:V \cong V\otimes\mathbb{I}$ and $r_V:V \cong \mathbb{I}\otimes V$ obeying the triangle condition (\cref{fig:triangle condition}) for all $V,W,Z \in \mathcal{C}$.
\end{definition}

\begin{figure}[ht]
\centering

\begin{minipage}{0.55\textwidth}
\centering
\begin{tikzpicture}[    
node distance=2.0cm and 2.4cm,     
obj/.style={font=\large},     
arrowlabel/.style={font=\small,inner sep=1pt},     
arrow/.style={-{To[length=7pt,width=9pt]},line width=0.6pt} ]

\node[obj](A) at (0,0) {$((U\otimes V)\otimes W)\otimes Z$};

\node[obj](B) at (2.4,1.6) {$(U\otimes V)\otimes (W\otimes Z)$};

\node[obj](C) at (4.8,0) {$U\otimes (V\otimes (W\otimes Z))$};

\node[obj](D) at (0,-1.8) {$(U\otimes (V\otimes W))\otimes Z$};

\node[obj](E) at (4.8,-1.8) {$U\otimes ((V\otimes W)\otimes Z)$};

\draw[arrow] (A) -- node[above left, arrowlabel] {$\Phi_{U\otimes V,W,Z}$} (B);

\draw[arrow] (B) -- node[above right, arrowlabel] {$\Phi_{U,V,W\otimes Z}$} (C);

\draw[arrow] (A) -- node[left, arrowlabel] {$\Phi_{U,V,W}\otimes \text{id}$} (D);

\draw[arrow] (D) -- node[below, arrowlabel] {$\Phi_{U,V\otimes W,Z}$} (E);

\draw[arrow] (E) -- node[right, arrowlabel] {$\text{id}\otimes \Phi_{V,W,Z}$} (C);

\end{tikzpicture}

\captionsetup{
    width=7.7cm,
    singlelinecheck=false
}
\caption{Pentagon condition for the associativity.}\label{fig:pentagon condition}
\end{minipage}
\hfill
\begin{minipage}{0.40\textwidth}
\centering
\begin{tikzpicture}[    
node distance=2.0cm and 2.4cm,     
obj/.style={font=\large},     
arrowlabel/.style={font=\small,inner sep=1pt},     
arrow/.style={-{To[length=7pt,width=9pt]},line width=0.6pt} ]

\node[obj](A) at (0,0) {$U\otimes V$};

\node[obj](B) at (-1.8,1.8) {$(U\otimes \mathbb{I})\otimes V$};

\node[obj](C) at (1.8,1.8) {$U\otimes (\mathbb{I}\otimes V)$};

\draw[arrow] (B) -- node[below left, arrowlabel] {$l_U\otimes \text{id}$} (A);

\draw[arrow] (C) -- node[below right, arrowlabel] {$\text{id}\otimes r_V$} (A);

\draw[arrow] (B) -- node[above, arrowlabel] {$\Phi_{U,\mathbb{I},V}$} (C);

\end{tikzpicture}
\captionsetup{
    width=4.5cm,
    singlelinecheck=false
}
\caption{Triangle condition for the compatibility with the unit object.}\label{fig:triangle condition}
\end{minipage}

\end{figure}

Let $\mathscr{H}$ be a quasitriangular quasi-Hopf algebra and $\text{Mod}_k$ be the category of $k$-modules, including the infinite-dimensional $k$-modules. We define the representation category $\text{Rep}(\mathscr{H})$ of $\mathscr{H}$ as the category of left $\mathscr{H}$-modules, which is also denoted as $^{\mathscr{H}}\text{Mod}$. The objects in $^{\mathscr{H}}\text{Mod}$ are pairs $V=\left(\underline{V}, { \triangleright _V}\right)$ of a $k$-module $\underline{V}$ and a $\mathscr{H}$-action ${ \triangleright _V}:\mathscr{H}\otimes \underline{V}\longrightarrow \underline{V}, h \otimes v \longmapsto h{ \triangleright _V} v$ satisfying the homomorphic algebra $h_1 { \triangleright _V}\left(h_2{ \triangleright _V}v\right) = (h_1h_2){ \triangleright _V}v$, $1{ \triangleright _V}v = v$ for all $v \in V,h\in\mathscr{H}$.

\begin{proposition}
    For any quasitriangular quasi-Hopf algebra, the category of representations of $\mathscr{H}$ is a braided monoidal category with a braiding structure
\begin{equation}\label{braiding structure}
\begin{array}{rccc}
\tau_{V,W}: & V\otimes W & \longrightarrow & W\otimes V \\
   & v\otimes w & \longmapsto  & \left(R^{(2)}\triangleright_W w\right)\otimes \left(R^{(1)}\triangleright_V v\right),
\end{array}
\end{equation}
for any objects $V,W$ in $^{\mathscr{H}}\text{Mod}$.
\end{proposition}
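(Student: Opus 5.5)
The plan has two parts. First we equip ${}^{\mathscr{H}}\text{Mod}$ with a monoidal structure coming from the quasi-bialgebra data of Definition \ref{def:quasibialgebra}. Then we check that $\tau_{V,W}$ is a natural isomorphism satisfying the two hexagon identities, using the axioms of Definition \ref{def:quasitriangular quasi-Hopf}.

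\textbf{Monoidal structure.} For $V,W\in{}^{\mathscr{H}}\text{Mod}$, let $V\otimes W$ be the $k$-module tensor product (completed $\hbar$-adically). The action is $h\triangleright(v\otimes w)=(h^{(1)}\triangleright_V v)\otimes(h^{(2)}\triangleright_W w)$, which is an action because $\Delta$ is an algebra map. The unit object is $\mathbb{I}=k$ with action through $\varepsilon$. The associator is
\[
\Phi_{V,W,Z}\bigl((v\otimes w)\otimes z\bigr)=\phi^{(1)}\triangleright v\otimes\phi^{(2)}\triangleright w\otimes\phi^{(3)}\triangleright z,
\]
and the unitors are the canonical identifications. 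The steps are:
\begin{itemize}
\item $\Phi$ is $\mathscr{H}$-linear. This is exactly the quasi-coassociativity identity $(\text{id}\otimes\Delta)\Delta(a)=\phi\,(\Delta\otimes\text{id})\Delta(a)\,\phi^{-1}$.
\item $\Phi$ is natural, since module maps commute with the action of $\phi$.
\item $\Phi$ is invertible, with inverse given by $\phi^{-1}$.
\item The pentagon of \cref{fig:pentagon condition} holds. Acting on $((u\otimes v)\otimes w)\otimes z$, the two composites act by $(\text{id}\otimes\text{id}\otimes\Delta)(\phi)(\Delta\otimes\text{id}\otimes\text{id})(\phi)$ and by $(1\otimes\phi)(\text{id}\otimes\Delta\otimes\text{id})(\phi)(\phi\otimes1)$. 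These agree by the 3-cocycle identity. One must keep track of the order: an operator applied later sits further to the left.
\item The triangle of \cref{fig:triangle condition} follows from the counit condition $(\text{id}\otimes\varepsilon\otimes\text{id})\phi=1\otimes1$.
\item The unitors are $\mathscr{H}$-linear by $(\varepsilon\otimes\text{id})\Delta=\text{id}=(\text{id}\otimes\varepsilon)\Delta$.
\end{itemize}
The antipode triple $(S,\alpha,\beta)$ is not needed for the braided monoidal structure; it only matters for duals.

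\textbf{Braiding.} Write $\tau_{V,W}=\text{flip}\circ(\mathcal{R}\triangleright-)$.
\begin{itemize}
\item $\tau_{V,W}$ is $\mathscr{H}$-linear. We need $\text{flip}(\mathcal{R}\Delta(h)(v\otimes w))=\Delta(h)\,\text{flip}(\mathcal{R}(v\otimes w))$. This is $\tau\circ\Delta(h)=\mathcal{R}\Delta(h)\mathcal{R}^{-1}$, which I take as part of the quasitriangular quasi-Hopf structure, the analogue of (\ref{eq2.def2.1}). Definition \ref{def:quasitriangular quasi-Hopf} states it only implicitly, so I would make this explicit.
\item $\tau_{V,W}$ is natural, again because module maps commute with the action.
\item $\tau_{V,W}$ is invertible, with inverse $(\mathcal{R}^{-1}\triangleright-)\circ\text{flip}$.
\end{itemize}

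\textbf{Hexagons.} The main obstacle is the two hexagon identities:
\[
\tau_{U\otimes V,W}=\Phi_{W,U,V}^{-1}\circ(\text{id}\otimes\tau_{U,W})^{\pm}\circ\Phi\cdots,
\]
together with its mirror for $\tau_{U,V\otimes W}$. The strategy is to push every flip to the far left, so that each composite becomes a single permutation of tensor factors precomposed with one element of $\mathscr{H}^{\otimes3}$.
\begin{itemize}
\item The left side $\tau_{U\otimes V,W}$ acts by $(\Delta\otimes\text{id})\mathcal{R}$ followed by the cyclic permutation $(u\otimes v)\otimes w\mapsto w\otimes(u\otimes v)$.
\item On the right side, moving the flips past the associators relabels indices. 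Each $\phi$ becomes some $\phi_{\sigma(1)\sigma(2)\sigma(3)}$ and each $\mathcal{R}$ becomes $\mathcal{R}_{13}$ or $\mathcal{R}_{23}$.
\item The resulting operator is $\phi_{312}\mathcal{R}_{13}\phi_{132}^{-1}\mathcal{R}_{23}\phi$, so the first hexagon is precisely the first identity in Definition \ref{def:quasitriangular quasi-Hopf}.
\item The second hexagon reduces in the same way to $(\text{id}\otimes\Delta)\mathcal{R}=\phi_{231}^{-1}\mathcal{R}_{13}\phi_{213}\mathcal{R}_{12}\phi^{-1}$.
\end{itemize}
The delicate point is the bookkeeping of the permuted legs $\phi_{\sigma}$ and the direction conventions, meaning whether $\Phi$ or $\Phi^{-1}$ appears and whether the composite is read left-to-right. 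I would fix conventions as in \cite{majid2000foundations} or \cite{barnes2015nonassociative}, verify one hexagon in full, and obtain the other by the symmetric argument (or by passing to $\mathcal{R}_{21}^{-1}$).
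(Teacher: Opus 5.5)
Your proposal is correct and follows essentially the same route as the paper: the associator is the action of $\phi$, the pentagon comes from the 3-cocycle identity, the triangle from $(\mathrm{id}\otimes\varepsilon\otimes\mathrm{id})\phi=1\otimes1$, and the two hexagons reduce to the two identities of Definition~\ref{def:quasitriangular quasi-Hopf}. You are in fact more careful than the paper in noting that $\mathscr{H}$-linearity of $\tau_{V,W}$ needs the analogue of (\ref{eq2.def2.1}), $(\tau\circ\Delta)(h)=\mathcal{R}\Delta(h)\mathcal{R}^{-1}$, which that definition omits and the paper's proof never checks.

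Your schematic hexagon display should read $\tau_{U\otimes V,W}=\Phi_{W,U,V}\circ(\tau_{U,W}\otimes\mathrm{id})\circ\Phi_{U,W,V}^{-1}\circ(\mathrm{id}\otimes\tau_{V,W})\circ\Phi_{U,V,W}$, not $\Phi^{-1}_{W,U,V}$ and $\mathrm{id}\otimes\tau_{U,W}$. With leg-numbering conventions, so that $\phi_{312}=\phi^{(2)}\otimes\phi^{(3)}\otimes\phi^{(1)}$, this composite is exactly the action of $\phi_{312}\mathcal{R}_{13}\phi_{132}^{-1}\mathcal{R}_{23}\phi$ followed by the cyclic permutation $x\otimes y\otimes z\mapsto z\otimes x\otimes y$, as you claim.
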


\begin{proof}
    For any objects $U,V,W$ in $^{\mathscr{H}}\text{Mod}$, let $\Phi_{U,V,W}:(U\otimes V)\otimes W \longrightarrow U\otimes (V\otimes W)$ be defined by the action of 3-cocycle $\phi= \phi^{(1)}\otimes \phi^{(2)}\otimes \phi^{(3)} \in Z^3\left(\mathscr{H},k\right) $
    \begin{equation}
        \Phi_{U,V,W} \left((u\otimes v)\otimes w \right) = \phi^{(1)}\triangleright u \otimes \left(\phi^{(2)}\triangleright v \otimes \phi^{(3)}\triangleright w\right),
    \end{equation}
    for all $u\in U$, $v\in V$, $w\in W$. According to the Definition \ref{def:quasibialgebra}, $\phi$ is invertible; thus, $\Phi_{U,V,W}$ is an isomorphism with an inverse given by the action of $\phi^{-1}$. Moreover, since the actions of $h \in \mathscr{H}$ on $(U\otimes V)\otimes W$ and $U\otimes (V\otimes W)$ are given by $(\Delta \otimes \text{id})\Delta(h)$ and $(\text{id}\otimes \Delta)\Delta(h)$, respectively; the first identity of the Definition \ref{def:quasibialgebra} also shows that $\Phi_{U,V,W}$ is $\mathscr{H}$-linear. 
    
    Firstly, we need to prove $\Phi$ satisfies the pentagon relation (\ref{fig:pentagon condition})
    \begin{equation}
        \Phi_{U,V,W\otimes Z} \circ \Phi_{U\otimes V,W, Z} = \left(\text{id}\otimes \Phi_{V,W,Z}\right)\circ \Phi_{U,V\otimes W,Z} \circ \left(\Phi_{U,V,W}\otimes \text{id}\right), 
    \end{equation}
    for any objects $U,V,W,Z$ in $^{\mathscr{H}}\text{Mod}$. The left-hand side is equal to the action 
    \begin{equation}
        \left[ {\left( {\textrm{id} \otimes \textrm{id} \otimes \Delta } \right)\left( \phi  \right)} \right]\left[ {\left( {\Delta  \otimes \textrm{id} \otimes \textrm{id}} \right)\left( \phi  \right)} \right] \triangleright ((U\otimes V)\otimes W)\otimes Z,
    \end{equation}
    while the right-hand side equals
    \begin{equation}
        \left( {1 \otimes \phi } \right)\left[ {\left( {\textrm{id} \otimes \Delta  \otimes \textrm{id}} \right)\left( \phi  \right)} \right]\left( {\phi  \otimes 1} \right) \triangleright ((U\otimes V)\otimes W)\otimes Z.
    \end{equation}
    From the 3-cocycle condition of $\phi$, both sides are equal. Thus, the functorial isomorphism $\Phi$ obeys the pentagon condition, which actually arises naturally from the 3-cocycle condition.

    Secondly, let us define the unit object as the trivial representation $\underline{1}=k$ and action $h\triangleright k = \varepsilon(h)k$. The triangle condition for the isomorphisms $l_V:V\cong V\otimes \underline{1}$ and $r_V:V\cong \underline{1}\otimes V$ is given by
    \begin{equation}
        (\text{id} \otimes r_V) \circ
\Phi_{U,k,V} = l_U \otimes \text{id}.   \end{equation}
The right-hand side is
\begin{equation}
    \text{RHS}= \left(l_U\otimes \text{id}\right)\left((u\otimes c)\otimes v\right) = cu\otimes v,
\end{equation}
and the left-hand side is
\begin{equation}
    \begin{split}
        \text{LHS}&=\phi^{(1)} \triangleright u \otimes \left(\varepsilon\left(\phi^{(2)}\right)c \otimes \phi^{(3)}\triangleright v\right)= \left(\text{id}\otimes \varepsilon \otimes \text{id}\right)(\phi)\triangleright(cu\otimes v) \\
        &=(1\otimes 1) (cu\otimes v)=cu\otimes v.
    \end{split}
\end{equation}
Here we use the normalization identity $\left(\text{id}\otimes \varepsilon \otimes \text{id}\right)(\phi)=1\otimes 1$. 

Finally, for the braiding structure (\ref{braiding structure}), which can be rewritten as $\tau_{V,W}=\sigma \circ R \triangleright_{V\otimes W}$, we can show that the two following hexagon diagrams commute for all objects in $^{\mathscr{H}}\text{Mod}$ by using Definition \ref{def:quasitriangular quasi-Hopf} and showing that both sides are equal, as we did with the previous coherence conditions. 

\begin{figure}[ht]
    \centering
    \begin{tikzpicture}[    
node distance=2.0cm and 2.4cm,     
obj/.style={font=\large},     
arrowlabel/.style={font=\small,inner sep=1pt},     
arrow/.style={-{To[length=7pt,width=9pt]},line width=0.6pt} ]

\node[obj](A) at (0,0) {$(U\otimes V)\otimes W$};

\node[obj](B) at (2.2,1.8) {$U\otimes (V\otimes W)$};

\node[obj](C) at (6,1.8) {$(V\otimes W) \otimes U$};

\node[obj](D) at (8.2,0) {$V\otimes (W\otimes U)$};

\node[obj](E) at (6,-1.8) {$V\otimes (U\otimes W)$};

\node[obj](F) at (2.2,-1.8) {$(V\otimes U)\otimes W$};

\draw[arrow] (A) -- node[above left, arrowlabel] {$\Phi_{U, V,W}$} (B);

\draw[arrow] (B) -- node[above, arrowlabel] {$\tau_{U,V\otimes W}$} (C);

\draw[arrow] (C) -- node[above right, arrowlabel] {$\Phi_{V,W,U}$} (D);

\draw[arrow] (A) -- node[below left, arrowlabel] {$\tau_{U,V}\otimes \text{id}$} (F);

\draw[arrow] (F) -- node[below, arrowlabel] {$\Phi_{V,U,W}$} (E);

\draw[arrow] (E) -- node[below right, arrowlabel] {$\text{id}\otimes \tau_{U,W}$} (D);
\end{tikzpicture}

\begin{tikzpicture}[    
node distance=2.0cm and 2.4cm,     
obj/.style={font=\large},     
arrowlabel/.style={font=\small,inner sep=1pt},     
arrow/.style={-{To[length=7pt,width=9pt]},line width=0.6pt} ]

\node[obj](A) at (0,0) {$U\otimes (V\otimes W)$};

\node[obj](B) at (2.2,1.8) {$(U\otimes V)\otimes W$};

\node[obj](C) at (6,1.8) {$W\otimes (U \otimes V)$};

\node[obj](D) at (8.2,0) {$(W\otimes U)\otimes V$};

\node[obj](E) at (6,-1.8) {$(U\otimes W)\otimes V$};

\node[obj](F) at (2.2,-1.8) {$U\otimes (W\otimes V)$};

\draw[arrow] (A) -- node[above left, arrowlabel] {$\Phi^{-1}_{U, V,W}$} (B);

\draw[arrow] (B) -- node[above, arrowlabel] {$\tau_{U\otimes V,W}$} (C);

\draw[arrow] (C) -- node[above right, arrowlabel] {$\Phi^{-1}_{W,U,V}$} (D);

\draw[arrow] (A) -- node[below left, arrowlabel] {$ \text{id}\otimes \tau_{V,W}$} (F);

\draw[arrow] (F) -- node[below, arrowlabel] {$\Phi^{-1}_{U,W,V}$} (E);

\draw[arrow] (E) -- node[below right, arrowlabel] {$ \tau_{U,W}\otimes \text{id}$} (D);
\end{tikzpicture}
\caption{Hexagon condition for the braiding structure.}
\end{figure}
The first diagram corresponds to identity
\begin{equation}
    (\textrm{id}\otimes \Delta )\mathcal{R}=\phi_{231}^{-1}\mathcal{R}_{13}\phi_{213}\mathcal{R}_{12}\phi^{-1},
\end{equation}
and the other one corresponds to 
\begin{equation}
     (\Delta \otimes \textrm{id})\mathcal{R} = \phi_{312} \mathcal{R}_{13}\phi_{132}^{-1}\mathcal{R}_{23}\phi,
\end{equation}
which coincide the Definition \ref{def:quasitriangular quasi-Hopf} of quasitriangular quasi-Hopf algebra. We completed the proof that $^{\mathscr{H}}\text{Mod}$ is a braided monoidal category.
\end{proof}

\begin{remark}
    The representation category of a quasitriangular quasi-Hopf algebra $\left(\mathscr{H}_\mathcal{T},\phi_\mathcal{T},\mathcal{R}_\mathcal{T},\alpha_\mathcal{T},\beta_\mathcal{T}\right)$ obtained from the cochain twist $\mathcal{T}$ (see Remark \ref{rmk:quasitri quasi-Hopf alg from twist}) is a braided monoidal category.
\end{remark}

\subsection{Category equivalence}
We recall that in our setup, the quasitriangular quasi-Hopf algebra $\mathscr{H}_\mathcal{T}$ that we are interested in is a cochain-twisted version of a Hopf algebra $\mathscr{H}$. Therefore, it is natural to determine the relationship between $\text{Rep}(\mathscr{H}):=^{\mathscr{H}}\text{Mod}$ and $\text{Rep}(\mathscr{H}_\mathcal{T}):=^{\mathscr{H}_\mathcal{T}}\text{Mod}$, which is equivalent to understand how $\text{Rep}(\mathscr{H})$ changes under the cochain twist $\mathcal{T}$ to become $\text{Rep}(\mathscr{H}_\mathcal{T})$. 

For any 2-cochain twist $\mathcal{T} \in \mathscr{H}\otimes \mathscr{H}$, any left $\mathscr{H}$-module is also a left $\mathscr{H}_\mathcal{T}$-module. Thus, for any objects $V,W$ in $\text{Rep}(\mathscr{H})$ with $\text{Rep}(\mathscr{H})$-morphism $f:V\longrightarrow W$, we define an identity functor 
\begin{equation}\label{eq:identity functor}
    \begin{array}{rccc}
{{\mathscr{F}}_{\mathcal{T}}}: & \text{Rep}(\mathscr{H})& \longrightarrow & \text{Rep}(\mathscr{H}_\mathcal{T}) \\
   & V & \longmapsto  & {{\mathscr{F}}_{\mathcal{T}}}(V),
\end{array}
\end{equation}
which also canonically induces a $\text{Rep}(\mathscr{H}_\mathcal{T})$-morphism ${{\mathscr{F}}_{\mathcal{T}}}(f):{{\mathscr{F}}_{\mathcal{T}}}(V)\longrightarrow{{\mathscr{F}}_{\mathcal{T}}}(W)$. Here, any object ${{\mathscr{F}}_{\mathcal{T}}}(V)$ is just the object $V$ when considered as an object in the representation category of $\mathscr{H}_\mathcal{T}$. The functor ${{\mathscr{F}}_{\mathcal{T}}}$ between the representation category of $\mathscr{H}$ and $\mathscr{H}_\mathcal{T}$ is invertible due to the invertible cochain twist $\mathcal{T}$. Hence, we have a category equivalence between the representation categories of $\mathscr{H}$ and $\mathscr{H}_\mathcal{T}$, which can also be realized from the intuition of $\hbar$-adic topological equivalence $\mathscr{H}/\hbar \mathscr{H}$. Furthermore, the functor ${{\mathscr{F}}_{\mathcal{T}}}$ is a monoidal functor that is compatible with the tensor product, which is given by natural $\text{Rep}(\mathscr{H}_\mathcal{T})$-isomorphisms
\begin{equation}
    \begin{array}{rccc}
J_{V,W}: & {{\mathscr{F}}_{\mathcal{T}}}(V)\otimes_\mathcal{T} {{\mathscr{F}}_{\mathcal{T}}}(W)& \longrightarrow & \mathscr{F(V\otimes W)} \\
   & v\otimes_\mathcal{T} w & \longmapsto  & \left(\bar{\mathcal{T}}^{(1)}\triangleright_V v\right)\otimes\left(\bar{\mathcal{T}}^{(2)}\triangleright_W w\right),
\end{array}
\end{equation}
for any objects $V,W$ in $\text{Rep}(\mathscr{H})$ and a twist $\mathcal{T}^{-1} = \bar{\mathcal{T}}^{(1)}\otimes\bar{\mathcal{T}}^{(2)}$, and 
\begin{equation}
    J_0: \underline{1}_\mathcal{T} \longrightarrow {{\mathscr{F}}_{\mathcal{T}}}(\underline{1}),
\end{equation}
for the unit object $\underline{1} = k$.

\begin{theorem}
    $\text{Rep}(\mathscr{H})$ and $\text{Rep}(\mathscr{H}_\mathcal{T})$ are equivalent as braided monoidal categories for any Hopf algebra $\mathscr{H}$ and any cochain twist $\mathcal{T} \in \mathscr{H}\otimes \mathscr{H}$.
\end{theorem}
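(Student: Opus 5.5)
The plan is to show that the pair $(\mathscr{F}_\mathcal{T},J)$ defined above is a braided monoidal functor, and then use that its underlying functor is an isomorphism of categories. Here $\mathscr{H}$ carries the trivial data $(\phi_0,\mathcal{R}_0,\alpha_0,\beta_0)$ and $\mathscr{H}_\mathcal{T}$ the twisted data of Remark \ref{rmk:quasitri quasi-Hopf alg from twist}; more generally we may start from any quasitriangular quasi-Hopf structure and use Theorem \ref{theorem:twist of quasitri quasi-Hopf}. Since $\mathscr{H}_\mathcal{T}=\mathscr{H}$ as algebras, a left module and an $\mathscr{H}$-linear map are the same in both categories. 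Hence $\mathscr{F}_\mathcal{T}$ is bijective on objects and on hom-sets, so it is an isomorphism (in particular an equivalence) of underlying categories. The remaining work is to check the monoidal and braided structure.

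\textbf{Step 1: $J$ is a natural isomorphism of $\mathscr{H}_\mathcal{T}$-modules.} $J_{V,W}$ is the action of $\mathcal{T}^{-1}$, which is invertible with inverse the action of $\mathcal{T}$. For $\mathscr{H}_\mathcal{T}$-linearity, $h$ acts on the source via $\Delta_\mathcal{T}(h)=\mathcal{T}\Delta(h)\mathcal{T}^{-1}$ and on the target via $\Delta(h)$; the identity $\mathcal{T}^{-1}\Delta_\mathcal{T}(h)=\Delta(h)\mathcal{T}^{-1}$ gives linearity. Naturality in $V,W$ holds because $f\otimes g$ commutes with the action of any element of $\mathscr{H}\otimes\mathscr{H}$ when $f,g$ are module maps. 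For the unit, take $J_0=\text{id}_k$; both unit objects are $k$ with action through $\varepsilon$, which is unchanged by the twist.

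\textbf{Step 2: the hexagon for $J$ against the associators.} We must show
\[
\mathscr{F}_\mathcal{T}(\Phi_{U,V,W})\circ J_{U\otimes V,W}\circ(J_{U,V}\otimes\text{id})=J_{U,V\otimes W}\circ(\text{id}\otimes J_{V,W})\circ\Phi^{\mathcal{T}}_{U,V,W}.
\]
Both sides are actions of elements of $\mathscr{H}^{\otimes 3}$ on $U\otimes V\otimes W$. The left side is $\phi\,(\Delta\otimes\text{id})(\mathcal{T}^{-1})\,(\mathcal{T}^{-1}\otimes 1)$. The right side is $(\text{id}\otimes\Delta)(\mathcal{T}^{-1})(1\otimes\mathcal{T}^{-1})\phi_\mathcal{T}$. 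Substituting the formula for $\phi_\mathcal{T}$ from Theorem \ref{theorem:twist of quasitri quasi-Hopf}, the right side telescopes to the left side. This is the same operator identity $X=Y\phi_\mathcal{T}$ used in the proof of the nonassociativity proposition, now with general $\phi$ in place of $\phi_0$. The unit triangles follow from the normalization $(\varepsilon\otimes\text{id})\mathcal{T}=(\text{id}\otimes\varepsilon)\mathcal{T}=1$.

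\textbf{Step 3: compatibility with the braiding.} We must show $\mathscr{F}_\mathcal{T}(\tau_{V,W})\circ J_{V,W}=J_{W,V}\circ\tau^{\mathcal{T}}_{V,W}$. Write $\tau=\sigma\circ\mathcal{R}$, where $\sigma$ is the flip. The left side is $\sigma\circ\mathcal{R}\mathcal{T}^{-1}$. The right side is $\mathcal{T}^{-1}\sigma\mathcal{R}_\mathcal{T}=\sigma\,\mathcal{T}_{21}^{-1}\mathcal{R}_\mathcal{T}$. With $\mathcal{R}_\mathcal{T}=\mathcal{T}_{21}\mathcal{R}\mathcal{T}^{-1}$ the two sides agree immediately.

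\textbf{Step 4: conclude.} A braided monoidal functor whose underlying functor is an equivalence is a braided monoidal equivalence. A quasi-inverse is $\mathscr{F}_{\mathcal{T}^{-1}}$, using that twisting $\mathscr{H}_\mathcal{T}$ by $\mathcal{T}^{-1}$ recovers $\mathscr{H}$.

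The main obstacle is Step 2. It requires careful bookkeeping of the leg ordering and of which coproduct ($\Delta$ or $\Delta_\mathcal{T}$) appears in each factor, since the intermediate $J$'s act on composite modules. I would first fix conventions: $J_{U\otimes V,W}$ acts by $(\Delta\otimes\text{id})(\mathcal{T}^{-1})$ because $U\otimes V$ carries the untwisted coproduct after $J_{U,V}$. After that, the verification is the telescoping cancellation described above. A secondary subtlety is $\hbar$-adic completeness of the tensor products, but the twist is a finite-order-by-order operator, so all identities hold in $\mathscr{H}^{\hat\otimes 3}$.
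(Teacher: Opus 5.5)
Your proposal is correct and follows the paper's own proof. Both use the identity functor $\mathscr{F}_\mathcal{T}$ with $J$ given by the action of $\mathcal{T}^{-1}$, obtain the unit diagrams from the normalization of $\mathcal{T}$, obtain the associativity hexagon from the operator identity $\phi\,(\Delta\otimes\text{id})(\mathcal{T}^{-1})(\mathcal{T}^{-1}\otimes 1)=(\text{id}\otimes\Delta)(\mathcal{T}^{-1})(1\otimes\mathcal{T}^{-1})\phi_\mathcal{T}$, and obtain the braiding square from $\mathcal{R}_\mathcal{T}=\mathcal{T}_{21}\mathcal{R}\mathcal{T}^{-1}$. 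In addition, you check the $\mathscr{H}_\mathcal{T}$-linearity and naturality of $J$ and give the explicit inverse $\mathscr{F}_{\mathcal{T}^{-1}}$, which the paper leaves implicit.
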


\begin{proof}
    Using the normalization conditions in Definition \ref{def:quasitriangular quasi-Hopf}, it is easy to check that the following coherence diagrams commute
\begin{figure}[H]
\centering
\begin{minipage}{0.45\textwidth}
\centering
\begin{tikzpicture}[    
node distance=2.0cm and 2.4cm,     
obj/.style={font=\large},     
arrowlabel/.style={font=\small,inner sep=1pt},     
arrow/.style={-{To[length=7pt,width=9pt]},line width=0.6pt} ]

\node[obj](A) at (0,0) {${{\mathscr{F}}_{\mathcal{T}}}(V)\otimes_\mathcal{T}\underline{1}_\mathcal{T}$};

\node[obj](B) at (0,-1.6) {${{\mathscr{F}}_{\mathcal{T}}}(V)\otimes_\mathcal{T}{{\mathscr{F}}_{\mathcal{T}}}(\underline{1})$};

\node[obj](C) at (3.5,0) {${{\mathscr{F}}_{\mathcal{T}}}(V)$};

\node[obj](D) at (3.5,-1.6) {${{\mathscr{F}}_{\mathcal{T}}}(V\otimes \underline{1})$};

\draw[arrow] (A) -- node[left, arrowlabel] {$J_0\otimes_\mathcal{T}\text{id}$} (B);

\draw[arrow] (A) -- node[above, arrowlabel] {$l_{{{\mathscr{F}}_{\mathcal{T}}}(V)}$} (C);

\draw[arrow] (B) -- node[below, arrowlabel] {$J_{\underline{1},V}$} (D);

\draw[arrow] (D) -- node[right, arrowlabel] {${{\mathscr{F}}_{\mathcal{T}}}(l_V)$} (C);

\end{tikzpicture}
\end{minipage}
\hfill
\begin{minipage}{0.45\textwidth}
\centering
\begin{tikzpicture}[    
node distance=2.0cm and 2.4cm,     
obj/.style={font=\large},     
arrowlabel/.style={font=\small,inner sep=1pt},     
arrow/.style={-{To[length=7pt,width=9pt]},line width=0.6pt} ]

\node[obj](A) at (0,0) {$\underline{1}_\mathcal{T}\otimes_\mathcal{T}{{\mathscr{F}}_{\mathcal{T}}}(V)$};

\node[obj](B) at (0,-1.6) {${{\mathscr{F}}_{\mathcal{T}}}(\underline{1})\otimes_\mathcal{T}{{\mathscr{F}}_{\mathcal{T}}}(V)$};

\node[obj](C) at (3.5,0) {${{\mathscr{F}}_{\mathcal{T}}}(V)$};

\node[obj](D) at (3.5,-1.6) {${{\mathscr{F}}_{\mathcal{T}}}(\underline{1}\otimes V)$};

\draw[arrow] (A) -- node[left, arrowlabel] {$J_0\otimes_\mathcal{T}\text{id}$} (B);

\draw[arrow] (A) -- node[above, arrowlabel] {$r_{{{\mathscr{F}}_{\mathcal{T}}}(V)}$} (C);

\draw[arrow] (B) -- node[below, arrowlabel] {$J_{\underline{1},V}$} (D);

\draw[arrow] (D) -- node[right, arrowlabel] {${{\mathscr{F}}_{\mathcal{T}}}(r_V)$} (C);

\end{tikzpicture}
\end{minipage}
\end{figure}
for any object $V$ in $\text{Rep}(\mathscr{H})$. Next, from Theorem \ref{theorem:twist of quasitri quasi-Hopf}, we have
\begin{equation}
        \left((\textrm{id}\otimes\Delta) \mathcal{T}^{-1}\right)\left(1\otimes \mathcal{T}^{-1}\right)\phi_\mathcal{T} = \phi \left((\Delta \otimes \textrm{id})\mathcal{T}^{-1}\right)\left(\mathcal{T}^{-1}\otimes 1\right),
    \end{equation}
    which is equivalent to a commutative diagram of $\text{Rep}(\mathscr{H}_\mathcal{T})$-action consisting of the monoidal structure on $\text{Rep}(\mathscr{H}_\mathcal{T})$
    \begin{figure}[H]
    \centering
    \begin{tikzpicture}[    
node distance=2.0cm and 2.4cm,     
obj/.style={font=\large},     
arrowlabel/.style={font=\small,inner sep=1pt},     
arrow/.style={-{To[length=7pt,width=9pt]},line width=0.6pt} ]

\node[obj](A) at (0,0) {$\left({{\mathscr{F}}_{\mathcal{T}}}(U)\otimes_\mathcal{T} {{\mathscr{F}}_{\mathcal{T}}}(V)\right)\otimes_\mathcal{T}{{\mathscr{F}}_{\mathcal{T}}}(W)$};

\node[obj](B) at (9,0) {${{\mathscr{F}}_{\mathcal{T}}}(U)\otimes_\mathcal{T}\left({{\mathscr{F}}_{\mathcal{T}}}(V)\otimes_\mathcal{T} {{\mathscr{F}}_{\mathcal{T}}}(W)\right)$};

\node[obj](C) at (9,-2) {${{\mathscr{F}}_{\mathcal{T}}}(U)\otimes_\mathcal{T}{{\mathscr{F}}_{\mathcal{T}}}(V\otimes W)$};

\node[obj](D) at (9,-4) {${{\mathscr{F}}_{\mathcal{T}}}(U\otimes(V\otimes W))$};

\node[obj](E) at (0,-2) {${{\mathscr{F}}_{\mathcal{T}}}(U\otimes V)\otimes_\mathcal{T}{{\mathscr{F}}_{\mathcal{T}}}(W)$};

\node[obj](F) at (0,-4) {${{\mathscr{F}}_{\mathcal{T}}}((U\otimes V)\otimes W)$};

\draw[arrow] (A) -- node[above, arrowlabel] {$\Phi_{{{\mathscr{F}}_{\mathcal{T}}}(U),{{\mathscr{F}}_{\mathcal{T}}}(V),{{\mathscr{F}}_{\mathcal{T}}}(W)}$} (B);

\draw[arrow] (B) -- node[right, arrowlabel] {$\text{id}\otimes_\mathcal{T}J_{V,W}$} (C);

\draw[arrow] (C) -- node[right, arrowlabel] {$J_{U,V\otimes W}$} (D);

\draw[arrow] (A) -- node[left, arrowlabel] {$J_{U,V}\otimes_\mathcal{T} \text{id}$} (E);

\draw[arrow] (E) -- node[left, arrowlabel] {$J_{U\otimes V,W}$} (F);

\draw[arrow] (F) -- node[below, arrowlabel] {${{\mathscr{F}}_{\mathcal{T}}}(\Phi_{U,V,W})$} (D);
\end{tikzpicture}
\end{figure}
for any objects $U,V,W$ in $\text{Rep}(\mathscr{H})$. Finally, from the definition of the twisted $\mathcal{R}$-matrix in Theorem \ref{theorem for new quasitriangular Hopf alg}, we have the braiding
\begin{equation}
    \begin{split}
        J_{U,V} \left(\tau_{{{\mathscr{F}}_{\mathcal{T}}}(V),{{\mathscr{F}}_{\mathcal{T}}}(U)}(v\otimes_\mathcal{T}u)\right) = \tau_{V,U} \left(J_{V,U}(v\otimes_\mathcal{T}u)\right)\qquad \qquad \forall u\in U,v\in V,
    \end{split}
\end{equation}
which allows us to prove the following diagram of braiding commute.
\begin{figure}[H]
    \centering
    \begin{tikzpicture}[    
node distance=2.0cm and 2.4cm,     
obj/.style={font=\large},     
arrowlabel/.style={font=\small,inner sep=1pt},     
arrow/.style={-{To[length=7pt,width=9pt]},line width=0.6pt} ]

\node[obj](A) at (0,0) {${{\mathscr{F}}_{\mathcal{T}}}(V)\otimes_\mathcal{T} {{\mathscr{F}}_{\mathcal{T}}}(U)$};

\node[obj](B) at (6,0) {${{\mathscr{F}}_{\mathcal{T}}}(U)\otimes_\mathcal{T} {{\mathscr{F}}_{\mathcal{T}}}(V)$};

\node[obj](C) at (0,-2) {${{\mathscr{F}}_{\mathcal{T}}}(V\otimes U)$};

\node[obj](D) at (6,-2) {${{\mathscr{F}}_{\mathcal{T}}}(U\otimes V)$};

\draw[arrow] (A) -- node[above, arrowlabel] {$\tau_{{{\mathscr{F}}_{\mathcal{T}}}(V),{{\mathscr{F}}_{\mathcal{T}}}(U)}$} (B);

\draw[arrow] (B) -- node[right, arrowlabel] {$J_{U,V}$} (D);

\draw[arrow] (A) -- node[left, arrowlabel] {$J_{V,U}$} (C);

\draw[arrow] (C) -- node[below, arrowlabel] {${{\mathscr{F}}_{\mathcal{T}}}(\tau_{V,U})$} (D);
\end{tikzpicture}
\end{figure}
Therefore, the representation categories of $\mathscr{H}$ and $\mathscr{H}_\mathcal{T}$ are equivalent as braided monoidal categories.
\end{proof}

%%%%%%%%%%%%%%%%%%%%%%%%%%%%%%%%%%%%%%%%%%%%%%%%%%%%%%%%%%%%%%%%%%%%%%%%%%%%%%%%%%%
\section{Stackification}\label{sec:Stackification}
Our paper, so far, presents a category of representations of a quasitriangular quasi-Hopf algebra on a coordinate domain of the original manifold $\mathcal{M}$. In other words, each coordinate domain of $\mathcal{M}$ is assigned to a representation category of a quasitriangular quasi-Hopf algebra. Our goal is to provide a globally canonical description for locally nongeometric $R$-space. Therefore, we follow the idea of gluing in descent theory by constructing the global definition of an $R$-space from its local data, which is mentioned as \textit{sheafification} or \textit{stackification} process.

\subsection{Site and prestack}
First, from the observation that each coordinate domain of $\mathcal{M}$ is assigned to a representation category, we define the category of coordinate domains.
\begin{definition}
    Let $\mathcal{M}$ be a topological manifold. The category of admissible coordinate domains $\text{Loc}\mathcal{M}$ is a category that contains:
    \begin{enumerate}
        \item Objects are admissible open coordinate domains $U\subset \mathcal{M}$.
        \item Morphisms are inclusion maps
        \begin{equation}
            \begin{split}
                \text{Hom}_{\text{Loc}\mathcal{M}}(V,U) =\left\{ 
                \begin{array}{c l}
                     \left\{\iota_{VU}:V\hookrightarrow U\right\}, &\quad V\subset U, \\
                     \varnothing,  &\quad V \not\subset U,
                \end{array}\right.
            \end{split}
        \end{equation}
        for any objects $U,V$ in $\text{Loc}\mathcal{M}$.
    \end{enumerate}
\end{definition}
We will explain why we only choose admissible coordinate domains after the construction of the prestack.

\begin{proposition}
    The underlying topology on the topological manifold $\mathcal{M}$ naturally induces a Grothendieck topology on the category $\text{Loc}\mathcal{M}$.
\end{proposition}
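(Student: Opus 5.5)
We define the Grothendieck topology on $\text{Loc}\mathcal{M}$ directly from the open-cover topology. For an admissible coordinate domain $U$, a family $\{\iota_{U_iU}:U_i\hookrightarrow U\}_{i\in I}$ of morphisms in $\text{Loc}\mathcal{M}$ is declared a covering family if $\bigcup_{i\in I}U_i=U$ as subsets of $\mathcal{M}$. This is the same notion of covering as for open subsets of $\mathcal{M}$, restricted to objects that are admissible.

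Before checking the axioms we must make sure fiber products exist. In $\text{Loc}\mathcal{M}$ the fiber product $U_i\times_U U_j$ should be the intersection $U_i\cap U_j$, because the category is a poset. So we first need that the class of admissible domains is closed under finite nonempty intersections. We adopt this as the standing property of admissibility: a single Hopf algebra description on $U$ restricts to any open subdomain, so open subsets of admissible domains are again admissible. Intersections of coordinate domains are coordinate domains, so $U_i\cap U_j$ is an object of $\text{Loc}\mathcal{M}$.

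With this in place, we verify the three axioms of a Grothendieck pretopology in order.

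\emph{Isomorphisms.} The only isomorphisms in a poset are identities, and $\{\text{id}_U\}$ covers $U$ trivially.

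\emph{Stability under base change.} Let $\{U_i\hookrightarrow U\}$ be a cover and let $V\hookrightarrow U$ be a morphism. The pulled-back family is $\{U_i\cap V\hookrightarrow V\}$, and its union is $V\cap U=V$. Empty intersections are simply dropped; alternatively one declares $\varnothing$ admissible, which is the usual convention.

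\emph{Transitivity.} Let $\{U_i\hookrightarrow U\}$ be a cover and, for each $i$, let $\{U_{ij}\hookrightarrow U_i\}_j$ cover $U_i$. Then the composites $U_{ij}\hookrightarrow U$ satisfy $\bigcup_{i,j}U_{ij}=\bigcup_iU_i=U$, so they cover $U$.

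A pretopology generates a Grothendieck topology, whose covering sieves are the sieves containing such a family. If we prefer to state the result in terms of sieves, we check the sieve axioms directly: the maximal sieve covers, covering sieves are stable under pullback, and the local character axiom holds. Each of these reduces to the set-theoretic union identities above.

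The main obstacle is conceptual rather than computational: $\text{Loc}\mathcal{M}$ must actually have the required fiber products, that is, admissibility must be hereditary under open subsets and intersections. We will state this explicitly as a requirement on the notion of admissible domain, justified by the restriction of a local Hopf-algebra description to smaller domains. If admissibility were not hereditary, we would fall back on working with sieves directly, where fiber products are not needed. The base-change axiom would then be phrased as follows: for a covering sieve $S$ on $U$ and a morphism $V\hookrightarrow U$, the sieve $\{W\hookrightarrow V : W\hookrightarrow U\in S\}$ covers $V$. Here covering means that the domains $W$ appearing in the sieve have union $V$. Verifying this requires every point of $V$ to lie in a small admissible neighbourhood inside some $U_i\cap V$, which we would secure by assuming that admissible domains form a basis of the topology of $\mathcal{M}$.
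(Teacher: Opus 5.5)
Your proposal is correct and follows the same route as the paper: you identify the fiber product $U_i\times_U U_j$ with $U_i\cap U_j$, then check the identity, base-change and transitivity axioms through the corresponding union identities. You are more careful than the paper, which uses $U_1\cap U_2$ as the fiber product without noting that it must itself be admissible; your heredity assumption holds automatically under Definition~\ref{def:admissible}, since an open subset of a coordinate domain contained in $B_{L/2}(p)$ is again such a domain.
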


\begin{proof}
    First, we identify the fibered product $U_1 \times_U U_2$ for any $U_1,U_2 \subset U$ with inclusion maps are the arrows $U_1 \longrightarrow U$ and $U_2 \longrightarrow U$. The following diagrams commute
    \begin{figure}[H]
\centering
\begin{minipage}{0.40\textwidth}
\centering
\begin{tikzpicture}[    
node distance=2.0cm and 2.4cm,     
obj/.style={font=\large},     
arrowlabel/.style={font=\small,inner sep=1pt},     
arrow/.style={-{To[length=7pt,width=9pt]},line width=0.6pt} ]

\node[obj] (A) at (0,0) {$U_1 \cap U_2$};

\node[obj] (B) at (0,-2) {$U_2$};

\node[obj] (C) at (2.4,0) {$U_1$};

\node[obj] (D) at (2.4,-2) {$U$};

\draw[arrow] (A) -- node[left, arrowlabel] {} (B);

\draw[arrow] (A) -- node[above, arrowlabel] {} (C);

\draw[arrow] (B) -- node[below, arrowlabel] {} (D);

\draw[arrow] (C) -- node[right, arrowlabel] {} (D);

\end{tikzpicture}
\end{minipage}
\hfill
\begin{minipage}{0.40\textwidth}
\centering
\begin{tikzpicture}[    
node distance=2.0cm and 2.4cm,     
obj/.style={font=\large},     
arrowlabel/.style={font=\small,inner sep=1pt},     
arrow/.style={-{To[length=7pt,width=9pt]},line width=0.6pt} ]

\node[obj] (A) at (0,0) {$W$};

\node[obj] (B) at (0,-2) {$U_2$};

\node[obj] (C) at (2.2,0) {$U_1$};

\node[obj] (D) at (2.2,-2) {$U$};

\draw[arrow] (A) -- node[left, arrowlabel] {} (B);

\draw[arrow] (A) -- node[above, arrowlabel] {} (C);

\draw[arrow] (B) -- node[below, arrowlabel] {} (D);

\draw[arrow] (C) -- node[right, arrowlabel] {} (D);

\end{tikzpicture}
\end{minipage}
\end{figure}
for any $W \longrightarrow U_1$, $W \longrightarrow U_2$; thus, $U_1 \cap U_2$ satisfies the universal property of the fibered product. Hence, 
\begin{equation}
      U_1 \times_U U_2\cong U_1 \cap U_2,
\end{equation}
for any objects $U_1 \subset U$ and $U_2 \subset U$. For any object $U$ in $\text{Loc}\mathcal{M}$, the family of arrows $\mathcal{U}=\left\{U_i\longrightarrow U\right\}_{i\in I}$ is a covering family and satisfies the axioms of Grothendieck topology. 
\begin{enumerate}
        \item Family of identity inclusion maps $\left\{\text{id}_U:U\longrightarrow U\right\}$ is a covering family.
        \item For any $V\subset U$, we have
        \begin{equation}
            \begin{split}
                \bigcup\limits_{i\in I}\left(U_i \cap V \right) = \left(\bigcup\limits_{i\in I} U_1\right) \cap V =U\cap V=V.
            \end{split}
        \end{equation}
        Thus, $\left\{U_i\cap V\longrightarrow V\right\}_{i\in I}$ is a covering family of $V \subset U$.
        \item For each $i \in I$, $\left\{V_{ij}\longrightarrow U_i\right\}_{j\in J_i}$ is a covering family of $U_i$. Because 
        \begin{equation}
            \bigcup\limits_{i\in I}\bigcup\limits_{j\in J_i} V_{ij} = \bigcup\limits_{i\in I} U_i =U,
        \end{equation}
        the family of composition arrows $\left\{V_{ij}\longrightarrow U_i \longrightarrow U\right\}_{i,j} = \left\{V_{ij}\longrightarrow U\right\}_{i,j}$ is a covering family.
\end{enumerate}
The set $J$ of the family of covers is a Grothendieck topology on category $\text{Loc}\mathcal{M}$.
\end{proof}
    
The category $\text{Loc}\mathcal{M}$, equipped with Grothendieck topology $J$, forms a \textit{site} $\mathscr{S}=\left(\text{Loc}\mathcal{M},J\right)$ covered by an open cover $\mathcal{U}=\left\{U_i\longrightarrow U\right\}_{i\in I}$. The fibered product between open domains can be realized as the intersection 
\begin{equation}
    U_{i_1} \times_\mathcal{U} U_{i_2}\times_\mathcal{U}\ldots \times_\mathcal{U}U_{i_n}  \cong  \bigcap\limits_{j=1}^n U_{i_j},   
\end{equation}
for any objects $\left\{U_{i_j}\right\}_{j=1}^n$ in $\mathscr{S}$.

Next, it is natural to introduce the 2-category $\textbf{CatRe}$, the 2-category of representation categories of quasitriangular quasi-Hopf algebras, where the objects are representation categories, 1-morphisms are functors between them, and 2-morphisms are natural transformations. We define the 1-prestack by the pseudofunctor
\begin{equation}
    \begin{split}
        \begin{array}{rccc}
\mathscr{P}_1: & \mathscr{S}^{\text{op}} & \longrightarrow & \textbf{CatRe} \\
   & U_i & \longmapsto  & \text{Rep}(\mathscr{H}_{U_i}),
\end{array}
    \end{split}
\end{equation}
where $\mathscr{H}_i$ is the quasitriangular quasi-Hopf algebra on an admissible coordinate domain $U_i$. The reason why we choose the site to consist only of admissible coordinate domains is that the local algebraic data of the theory are only well-defined on such domains. An admissible coordinate domain is an open set on which one can choose local coordinates and construct the corresponding local quasi-Hopf algebra, or equivalently, its representation category. In a more intuitive point of view, for a general open set $U\subset \mathcal{M}$, especially if $U$ is too large, there may be no single globally defined algebra $\mathscr{H}_U$. In the locally nongeometric situation, this failure is not an accident; it is part of the geometry. Therefore, assigning $U \longmapsto \mathscr{P}_1(U)=\text{BRep}(\mathscr{H}_U)$ for every open set $U$ would artificially assume the existence of global algebraic data, which we precisely want to avoid. Instead, we choose a site whose objects are admissible coordinate domains such that the local model exists on each such local domain. The key point is that a site does not need to contain all open sets. It is enough that admissible coordinate domains form a basis for the topology: every open set can be covered by admissible coordinate domains, and intersections can be refined by admissible coordinate domains. Thus, arbitrary open sets are not ignored; they are reconstructed by descent.

The morphisms, which are now the restriction map $\rho_{UV}:U \longrightarrow V$ adapting the composition $\rho_{UW}=\rho_{VW}\circ \rho_{UV}$, for all objects $W\subset V\subset U$ in $\mathscr{S}^{\text{op}}$. The restriction map $\rho_{UV}$ induces a restriction map $\mathscr{P}_1(\rho_{UV}):\text{Rep}(\mathscr{H}_{U})\longrightarrow \text{Rep}(\mathscr{H}_{V}), X\longmapsto X|_{V}$, which is compatible with the following commutative diagram.
\begin{figure}[H]
\centering
\begin{minipage}{0.50\textwidth}
\centering
\begin{tikzpicture}[    
node distance=2.0cm and 2.4cm,     
obj/.style={font=\large},     
arrowlabel/.style={font=\small,inner sep=1pt},     
arrow/.style={-{To[length=7pt,width=9pt]},line width=0.6pt} ]

\node[obj] (A) at (0,0) {$U$};

\node[obj] (B) at (0,-1.6) {$V$};

\node[obj] (C) at (3.5,0) {$\text{Rep}(\mathscr{H}_{U})$};

\node[obj] (D) at (3.5,-1.6) {$\text{Rep}(\mathscr{H}_{V})$};

\draw[arrow] (A) -- node[left, arrowlabel] {$\rho_{UV}$} (B);

\draw[arrow] (A) -- node[above, arrowlabel] {$\mathscr{P}_1$} (C);

\draw[arrow] (B) -- node[below, arrowlabel] {$\mathscr{P}_1$} (D);

\draw[arrow] (C) -- node[right, arrowlabel] {$\mathscr{P}_1(\rho_{UV})$} (D);

\end{tikzpicture}
\captionsetup{
    width=6cm,
    singlelinecheck=false
}
\caption{Morphisms on $\textbf{CatRe}$ as restriction maps.}
\end{minipage}
\hfill
\begin{minipage}{0.40\textwidth}
\centering
\begin{tikzpicture}[    
node distance=2.0cm and 2.4cm,     
obj/.style={font=\large},     
arrowlabel/.style={font=\small,inner sep=1pt},     
arrow/.style={-{To[length=7pt,width=9pt]},line width=0.6pt} ]

\node[obj] (A) at (0,0) {$\text{Rep}(\mathscr{H}_{U})$};

\node[obj] (B) at (3.6,0) {$\text{Rep}(\mathscr{H}_{V})$};

\node[obj] (C) at (1.8,-1.6) {$\text{Rep}(\mathscr{H}_{W})$};

\draw[arrow] (A) -- node[above, arrowlabel] {$\mathscr{P}_1(\rho_{UV})$} (B);

\draw[arrow] (A) -- node[below left, arrowlabel] {$\mathscr{P}_1(\rho_{UW})$} (C);

\draw[arrow] (B) -- node[below right, arrowlabel] {$\mathscr{P}_1(\rho_{VW})$} (C);

\end{tikzpicture}
\captionsetup{
    width=5cm,
    singlelinecheck=false
}
\caption{Composition of restriction maps on $\textbf{CatRe}$.}
\end{minipage}
\end{figure}

To construct a global geometric description, such as \textit{a stack}, one requires that the local objects be allowed to be patched together to become a global one. This step is called \textit{stackification}. For the 1-stackification, we follow the definitions in \cite{breen2009notes,vistoli2004notes,stacks-project}. The descent condition for a 1-stack is the gluing of the descent data on the double overlap $U_{ij}=U_i \cap U_j$ and triple overlap $U_{ijk}=U_i \cap U_j \cap U_k$. In this scenario, the \textit{descent datum} $\left(\mathcal{C}_i,g_{ij}\right)$ in category $\textbf{CatRe}$ relative to the family $\mathcal{U}=\left\{U_i\longrightarrow U\right\}_{i\in I}$ of open covers is given by an object $\mathcal{C}_i = \text{Rep}(\mathscr{H}_{U_i})$ and an isomorphism (category equivalence) $g_{ij}:\mathcal{C}_j|_{U_{ij}}\longrightarrow \mathcal{C}_i |_{U_{ij}}$ that preserves the braided monoidal structure of the representation categories of quasitriangular quasi-Hopf algebras. The descent condition on triple overlap $U_{ijk}$ requires $g_{ij} g_{jk} = g_{ik}$, which is called \textit{strict cocycle condition}, for every $(i,j,k)\in I^3$. A category-valued 1-stack is sufficient to glue local representation categories only when the equivalences satisfy a strict cocycle condition on triple overlaps. However, in our quasitriangular quasi-Hopf setting, the transitions of untwisted Hopf algebras $\left[\mathscr{H}\right]=\mathscr{H}|_{\mathcal{T}=1\otimes 1}$ may not adopt a strict cocycle condition on triple overlaps. For instance, the isomorphism $g_{ij}$ between representation categories induces a Hopf-algebra isomorphism $G_{ij}:\left[\mathscr{H}_j\right]\longrightarrow\left[\mathscr{H}_i\right]$ (see Proposition \ref{prop:gauge transformation} and Appendix \ref{appendix 2}). However, these underlying Hopf algebras actually are Hopf algebras over $\hbar$-adic topology, which should be manifested by $\mathscr{H}[[\hbar]]$ and $G_{ij}[[\hbar]]$. The algebra isomorphism $G_{ij}:=G_{ij}[[\hbar]]$ can be written in formal power series in $\hbar$
\begin{equation}
    G_{ij} = G_{ij}^{(0)}+\hbar G_{ij}^{(1)}+\mathcal{O}(\hbar^2),
\end{equation}
and emerges the composition
\begin{equation}
    G_{ij}G_{jk} = G_{ij}^{(0)}G_{jk}^{(0)}+\hbar \left(G_{ij}^{(0)}G_{jk}^{(1)}+G_{ij}^{(1)}G_{jk}^{(0)}\right) +\mathcal{O}(\hbar^2).
\end{equation}
The strict cocycle condition $G_{ij}G_{jk} = G_{ik}$ on any triple overlap $U_{ijk}$ is therefore an infinite tower of equations in powers of $\hbar$, which is not natural. Passing to representation categories, one requires a monoidal natural transformation 
\begin{equation}
    \alpha_{ijk}: g_{ij} g_{jk} \Longrightarrow g_{ik}
\end{equation}
to fix the $\hbar$-adic non-strictness of the cocycle condition on triple overlaps. The existence of $\alpha_{ijk}$ corresponds to a family of 2-arrows in 2-descent theory\cite{breen2009notes}. Therefore, we need to elevate our theory to a higher structure of 2-stackification, which describes fibered 2-categories over a site.

\subsection{Bicategory-valued prestack}
We reiterate that a local domain of a nongeometric $R$-space can be described by a representation category of a quasitriangular quasi-Hopf algebra fibered over the site $\mathscr{S}$, which is called a 1-prestack. In the previous subsection, we have shown that describing a $R$-space, in global sense, as a 1-stack is insufficient for gluing local objects into a global one due to the $\hbar$-adic non-strictness. Thus, we need a higher structure that consists of the monoidal natural transformations $\alpha_{ijk}: g_{ij} g_{jk} \Longrightarrow g_{ik}$. For this purpose, let us \textit{deloop} the category $\text{Rep}(\mathscr{H})$ into a one-object bicategory. 
\begin{definition}
    Let $\text{Rep}(\mathscr{H})$ be a braided monoidal category of representations of a quasitriangular quasi-Hopf algebra $\mathscr{H}$. The bicategory $\text{BRep}(\mathscr{H})$ is a one-object bicategory that consists of
    \begin{enumerate}
        \item There is only one object $*$,
        \item 1-morphisms are representations of $\mathscr{H}$,
        \item 2-morphisms are $\mathscr{H}$-linear maps between representations.
    \end{enumerate}
    Moreover, there is also a tricategory $\textbf{BCatRe}$ is a tricategory of bicategories of representations such that
    \begin{enumerate}
        \item Objects are bicategories $\text{BRep}(\mathscr{H})$,
        \item 1-morphisms are 2-functors,
        \item 2-morphisms are lax natural transformations,
        \item 3-morphisms are modifications,
    \end{enumerate}
\end{definition}
Functors between representation categories $\text{Rep}(\mathscr{H})$ automatically induced 2-functors between $\text{BRep}(\mathscr{H})$. Braided monoidal equivalence between categories of representations becomes braided monoidal equivalence between bicategories of representations. 

Following the previous step, we can also define a \textit{bicategory-valued prestack} (or \textit{weak 2-prestack}) by using the tricategory $\textbf{BCatRe}$ that is fibered over the site $\mathscr{S}$. The weak 2-prestack is given by the pseudofunctor
\begin{equation}
    \begin{split}
        \begin{array}{rccc}
\mathscr{P}: & \mathscr{S}^{\text{op}} & \longrightarrow & \textbf{BCatRe} \\
   & U_i & \longmapsto  & \text{BRep}(\mathscr{H}_{U_i}),
\end{array}
    \end{split}
\end{equation}
with the composition and restriction maps are the same as the previous construction of 1-prestack.

\subsection{2-descent conditions and transition on the same covering family}
In order to stackify the weak 2-prestack over the site $\mathscr{S}$, one requires the 2-descent data to satisfy the 2-descent conditions on all $n$-fold overlaps. Within the framework of weak 2-prestack (2-stack), the 2-descent datum becomes a 3-tuple instead of a 2-tuple in the 1-prestack (1-stack) cases.

\begin{definition}
    A 2-descent datum in category $\textbf{BCatRe}$ relative to the family $\mathcal{U}=\left\{U_i\longrightarrow U\right\}_{i\in I}$ of open covers is a 3-tuple $\left(\mathcal{C}_i,g_{ij},\alpha_{ijk}\right)$ that consists of
    \begin{enumerate}
        \item An object $\mathcal{C}_i = \text{BRep}(\mathscr{H}_{U_i})$ for each $i\in I$,
        \item A 1-arrow $g_{ij}:\mathcal{C}_j|_{U_{ij}}\longrightarrow\mathcal{C}_i|_{U_{ij}}$ for each pair $(i,j)\in I^2$,
        \item A 2-arrow $\alpha_{ijk}:g_{ij} g_{jk} \Longrightarrow g_{ik}$ for each $(i,j,k)\in I^3$,
    \end{enumerate}
    such that the family of 2-arrows forms a tetrahedral diagram of the restrictions to quadruple overlap of 2-arrows, whose each face is a triple overlap, commutes.
\end{definition}

\begin{figure}[H]
\centering
\begin{minipage}{0.35\textwidth}
\centering
\begin{tikzpicture}[    
node distance=2.0cm and 2.4cm,     
obj/.style={font=\large},     
arrowlabel/.style={font=\small,inner sep=1pt},     
arrow/.style={-{To[length=7pt,width=9pt]},line width=0.6pt},twoarrow/.style={-{Implies[length=7pt,width=9pt]},double,double distance=1.2pt,line width=0.4pt} ]

\node[obj] (A) at (0,0) {$\mathcal{C}_i$};

\node[obj] (B) at (2,2) {$\mathcal{C}_j$};

\node[obj] (C) at (4,0) {$\mathcal{C}_k$};

\node[obj] (D) at (2,0) {};

\node[obj] (E) at (2,1.8) {};

\draw[arrow] (A) -- node[above left, arrowlabel] {$g_{ij}$} (B);

\draw[arrow] (A) -- node[below, arrowlabel] {$g_{ik}$} (C);

\draw[arrow] (B) -- node[above right, arrowlabel] {$g_{jk}$} (C);

\draw[twoarrow] (E) -- node[right, arrowlabel] {$\alpha_{ijk}$} (D);

\end{tikzpicture}
\captionsetup{
    width=5cm,
    singlelinecheck=false
}
\caption{Transition on triple overlap.}
\end{minipage}
\hfill
\begin{minipage}{0.50\textwidth}
\centering
\begin{tikzpicture}[    
node distance=2.0cm and 2.4cm,     
obj/.style={font=\large},     
arrowlabel/.style={font=\small,inner sep=1pt},     
arrow/.style={-{To[length=7pt,width=9pt]},line width=0.6pt} ]

\node[obj] (A) at (0,0) {$\mathcal{C}_i$};

\node[obj] (B) at (3.2,2) {$\mathcal{C}_j$};

\node[obj] (C) at (1.8,-1.5) {$\mathcal{C}_k$};

\node[obj] (D) at (5,0) {$\mathcal{C}_m$};

\draw[arrow] (A) -- node[above left, arrowlabel] {$g_{ij}$} (B);

\draw[arrow] (B) -- node[above left, arrowlabel] {$g_{jk}$} (C);

\draw[arrow] (A) -- node[below left, arrowlabel] {$g_{ik}$} (C);

\draw[arrow] (C) -- node[below right, arrowlabel] {$g_{km}$} (D);

\draw[arrow] (B) -- node[above right, arrowlabel] {$g_{jm}$} (D);

\draw[arrow] (A) -- node[below right, arrowlabel] {$g_{im}$} (D);

\end{tikzpicture}
\captionsetup{
    width=5cm,
    singlelinecheck=false
}
\caption{Tetrahedral diagram on quadruple overlap.}
\end{minipage}
\end{figure}  

The commutative tetrahedral diagram is equivalent to the 2-descent condition on quadruple overlap, which is given by
\begin{equation}
    {\alpha _{ijk}} \circ \left( {{\text{id*}}{\alpha _{jmk}}} \right) = {\alpha _{imk}} \circ \left( {{\alpha _{ijm}}*{\text{id}}} \right),
\end{equation}
for any $(i,j,k) \in I^2$. In this setup, the 1-arrows are braided monoidal 2-equivalences between any pair of representation bicategories, and the 2-arrows are lax natural transformations. 

In fact, we do not need to assume the existence of the 1-arrow and 2-arrow of the 2-descent datum directly. It is more natural to construct the 2-descent conditions at the level of underlying Hopf algebras and twists, rather than at the categorical level. To study the transition of the underlying algebras on double overlaps, we consider that the Hopf-algebra isomorphism $G_{ij}:\left[\mathscr{H}_j\right]\longrightarrow\left[\mathscr{H}_i\right]$ exists for any $(i,j)\in I^2$ due to the physical constraint. We introduce 
\begin{definition}
    Let $\mathscr{H}_i := \mathscr{H}_{U_i}|_{U_{ij}}$ and $\mathscr{H}_j := \mathscr{H}_{U_j}|_{U_{ij}}$ be the quasitriangular quasi-Hopf algebras on the restriction $U_{ij}=U_i\cap U_j$, an isomorphism $G_{ij}$ between the untwisted Hopf algebras $\left[\mathscr{H}_i\right]$ and $\left[\mathscr{H}_j\right]$ is called a \textit{gauge transformation} if it is invertible, satisfies the homomorphism algebra, and preserves the untwisted coproduct (see \cref{fig:gauge trans preserve coproduct}).
\end{definition}
For the quasitriangular quasi-Hopf algebras, there are \textit{twist transformations} $T_{ij}=\mathcal{T}_i \left(G_{ij}\otimes G_{ij}\right)\mathcal{T}_j^{-1}$ that is also invertible because $\mathcal{T}$ and $G_{ij}$ is invertible.

\begin{figure}[H]
\centering
\begin{minipage}{0.35\textwidth}
\centering
\begin{tikzpicture}[    
node distance=2.0cm and 2.4cm,     
obj/.style={font=\large},     
arrowlabel/.style={font=\small,inner sep=1pt},     
arrow/.style={-{To[length=7pt,width=9pt]},line width=0.6pt} ]

\node[obj] (A) at (0,0) {$\left[\mathscr{H}_j\right]$};

\node[obj] (B) at (0,-1.8) {$\left[\mathscr{H}_i\right]$};

\node[obj] (C) at (3,0) {$\left[\mathscr{H}_j\right]\otimes \left[\mathscr{H}_j\right]$};

\node[obj] (D) at (3,-1.8) {$\left[\mathscr{H}_i\right]\otimes \left[\mathscr{H}_i\right]$};

\draw[arrow] (A) -- node[left, arrowlabel] {$G_{ij}$} (B);

\draw[arrow] (A) -- node[above, arrowlabel] {$\Delta$} (C);

\draw[arrow] (B) -- node[below, arrowlabel] {$\Delta$} (D);

\draw[arrow] (C) -- node[right, arrowlabel] {$G_{ij}\otimes G_{ij}$} (D);

\end{tikzpicture}
\captionsetup{
    width=5cm,
    singlelinecheck=false
}
\caption{Gauge transformation between $\left[\mathscr{H}_j\right]$ and $\left[\mathscr{H}_i\right]$ preserves the coproduct.}\label{fig:gauge trans preserve coproduct}
\end{minipage}
\hfill
\begin{minipage}{0.50\textwidth}
\centering
\begin{tikzpicture}[    
node distance=2.0cm and 2.4cm,     
obj/.style={font=\large},     
arrowlabel/.style={font=\small,inner sep=1pt},     
arrow/.style={-{To[length=7pt,width=9pt]},line width=0.6pt} ]

\node[obj] (A) at (0,0) {$\left[\mathscr{H}_j\right]$};

\node[obj] (B) at (2.5,0) {$\left[\mathscr{H}_i\right]$};

\node[obj] (C) at (5,0) {$\left[\mathscr{H}_i\right]\otimes \left[\mathscr{H}_i\right]$};

\node[obj] (D) at (0,-1.8) {$\left[\mathscr{H}_j\right]\otimes\left[\mathscr{H}_j\right]$};

\node[obj] (E) at (5,-1.8) {$\left[\mathscr{H}_i\right]\otimes \left[\mathscr{H}_i\right]$};

\draw[arrow] (A) -- node[above, arrowlabel] {$G_{ij}$} (B);

\draw[arrow] (B) -- node[above, arrowlabel] {$\Delta_i$} (C);

\draw[arrow] (A) -- node[left, arrowlabel] {$\Delta_j$} (D);

\draw[arrow] (D) -- node[below, arrowlabel] {$G_{ij}\otimes G_{ij}$} (E);

\draw[arrow] (E) -- node[right, arrowlabel] {$T_{ij}(\quad)T_{ij}^{-1}$} (C);

\end{tikzpicture}
\captionsetup{
    width=5cm,
    singlelinecheck=false
}
\caption{The transition of twisted coproduct on $U_{ij}$.}\label{fig:transition of twisted coproduct}
\end{minipage}
\end{figure}

\begin{proposition}\label{prop:gauge transformation}
    On $U_{ij}$, suppose that a gauge transformation and a twist transformation exist. The gauge transformation and the twist transformation induce a braided monoidal 2-equivalence, which plays the role of the 1-arrow in the 2-descent datum, between two representation bicategories of quasitriangular quasi-Hopf algebras corresponding to restrictions $U_i \longrightarrow U_{ij}$ and $U_j \longrightarrow U_{ij}$.
\end{proposition}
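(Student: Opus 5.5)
The plan is to build the 1-arrow $g_{ij}$ in two stages and compose them. First, the gauge transformation is realized as a pullback functor. Second, the twist transformation is realized as a Drinfeld-twist-type monoidal structure on that functor. The result is then delooped. Concretely, the gauge transformation $G_{ij}:[\mathscr{H}_j]\to[\mathscr{H}_i]$ is an algebra isomorphism compatible with $\Delta$ (\cref{fig:gauge trans preserve coproduct}). So restriction of scalars along $G_{ij}^{-1}$ gives a functor $G_{ij}^*:\text{Rep}(\mathscr{H}_j)\to\text{Rep}(\mathscr{H}_i)$. Here a module $(\underline V,\triangleright_V)$ is sent to $(\underline V, h\triangleright' v = G_{ij}^{-1}(h)\triangleright_V v)$, and morphisms are unchanged. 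Since the twisted and untwisted algebras share the same underlying algebra (Theorem \ref{theorem:twist of quasitri quasi-Hopf}), this is well defined on the twisted representation categories. It is an isomorphism of categories, with inverse given by pullback along $G_{ij}$.

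Next I would equip $G_{ij}^*$ with a monoidal structure. The coproducts are related by $\Delta_i\circ G_{ij}=T_{ij}\,(G_{ij}\otimes G_{ij})\Delta_j(\,\cdot\,)\,T_{ij}^{-1}$ (\cref{fig:transition of twisted coproduct}). I would therefore define $J^{ij}_{V,W}:G_{ij}^*V\otimes_i G_{ij}^*W\to G_{ij}^*(V\otimes_j W)$ as the action of $(G_{ij}^{-1}\otimes G_{ij}^{-1})(T_{ij}^{-1})$, mirroring the map $J_{V,W}$ of the preceding equivalence theorem. Checking $\mathscr{H}_i$-linearity is immediate from the displayed coproduct relation. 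The unit compatibility follows from normalization of $\mathcal{T}_i,\mathcal{T}_j$ and counit preservation by $G_{ij}$.

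For the hexagon-free monoidal coherence, I need the identity
\[
\phi_{\mathcal{T}_i}=(T_{ij})_{23}\,\bigl((\text{id}\otimes\Delta_i)T_{ij}\bigr)\,(G_{ij}^{\otimes 3})(\phi_{\mathcal{T}_j})\,\bigl((\Delta_i\otimes\text{id})T_{ij}^{-1}\bigr)(T_{ij})^{-1}_{12},
\]
together with $\mathcal{R}_{\mathcal{T}_i}=(T_{ij})_{21}(G_{ij}\otimes G_{ij})(\mathcal{R}_{\mathcal{T}_j})T_{ij}^{-1}$. Both will be derived by substituting $T_{ij}=\mathcal{T}_i(G_{ij}\otimes G_{ij})\mathcal{T}_j^{-1}$ and $\phi_{\mathcal{T}}=\text{d}\mathcal{T}$, $\mathcal{R}_\mathcal{T}=\mathcal{T}_{21}\mathcal{T}^{-1}$ from Remark \ref{rmk:quasitri quasi-Hopf alg from twist}, and using that $G_{ij}^{\otimes 2}$ intertwines the untwisted coproducts. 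This is essentially the statement that twisting is transitive: $\mathscr{H}_i$ is the $T_{ij}$-twist of the $G_{ij}$-transport of $\mathscr{H}_j$. The associativity square and the braiding square for $(G_{ij}^*,J^{ij})$ then follow exactly as in the proof of the braided equivalence $\text{Rep}(\mathscr{H})\simeq\text{Rep}(\mathscr{H}_\mathcal{T})$. Equivalently, one can factor $g_{ij}$ as $\mathscr{F}_{\mathcal{T}_i}\circ G_{ij}^*\circ\mathscr{F}_{\mathcal{T}_j}^{-1}$ on untwisted categories and invoke that theorem twice. In that version the middle functor is a strict braided monoidal isomorphism because $G_{ij}$ is a Hopf isomorphism with trivial $\mathcal{R}_0,\phi_0$. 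I expect this factorization to be the cleanest route. The main obstacle is the bookkeeping that the composite monoidal structure is exactly the one induced by $T_{ij}$ and is independent of the factorization. Everything must also be verified $\hbar$-adically, using completed tensor products and invertibility of $\mathcal{T}$ in $\mathscr{H}\hat\otimes\mathscr{H}$.

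Finally I would deloop. A braided monoidal equivalence $(g_{ij},J^{ij})$ induces a 2-functor $\text{B}g_{ij}:\text{BRep}(\mathscr{H}_j)\to\text{BRep}(\mathscr{H}_i)$. It is identity on the single object, acts by $g_{ij}$ on 1- and 2-morphisms, and has compositor $J^{ij}$; the pseudo-inverse monoidal functor gives the inverse 2-functor up to invertible pseudonatural transformations. The braiding-preservation square ensures that $\text{B}g_{ij}$ respects the braiding regarded as structure on the delooping. This identifies $\text{B}g_{ij}$ as the required 1-arrow from $\mathcal{C}_j|_{U_{ij}}$ to $\mathcal{C}_i|_{U_{ij}}$.
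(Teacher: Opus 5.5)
Your proposal is correct and follows essentially the paper's own proof. You transport along $G_{ij}$ and then twist by $T_{ij}$, with $J^{ij}$ the action of $T_{ij}^{-1}$ through the pulled-back action, and you recover $\phi_i$ and $\mathcal{R}_i$ as $T_{ij}$-twists of the transported data. Your factorization $\mathscr{F}_{\mathcal{T}_i}\circ G_{ij}^*\circ\mathscr{F}_{\mathcal{T}_j}^{-1}$ is just a tidier packaging of the same computation, and its composite monoidal structure is indeed $\mathcal{T}_j\,(G_{ij}^{-1}\otimes G_{ij}^{-1})(\mathcal{T}_i^{-1})=(G_{ij}^{-1}\otimes G_{ij}^{-1})(T_{ij}^{-1})$.

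One slip needs fixing. In your displayed identity for $\phi_{\mathcal{T}_i}$, the coproducts applied to $T_{ij}$ must be the transported ones $\Delta_j^{G_{ij}}=(G_{ij}\otimes G_{ij})\circ\Delta_j\circ G_{ij}^{-1}$, i.e.\ the coproduct being twisted, as in the paper. If you use $\Delta_i$ instead, the factors $(T_{ij})_{23}$ and $(T_{ij})_{12}^{-1}$ must stand next to $G_{ij}^{\otimes 3}(\phi_{\mathcal{T}_j})$ rather than on the outside.
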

\begin{proof}
    See Appendix \ref{appendix 2}.
\end{proof}

Proposition \ref{prop:gauge transformation} should be viewed as a bicategorical refinement of the classical Drinfeld twisting Theorem \ref{theorem:twist of quasitri quasi-Hopf}. While it is well known that a Drinfeld twist induces a braided monoidal equivalence between representation categories of quasi-Hopf algebras, we prove that, on double overlaps of admissible coordinate domains, 
the combined gauge and twist transformations naturally define the 1-morphisms of the 2-descent datum for the representation bicategory. Thus, the existence of gauge transformation $G_{ij}$ and twist transformation $T_{ij}$, along with compatible monoidal 2-equivalence $\alpha_{ijk}$, is sufficient for a family of 2-descent data satisfying the 2-descent condition to exist. However, we emphasize that the reverse might not be true. That is, if we initially assume the existence of compatible 2-descent datums $\left(\mathcal{C}_i,g_{ij},\alpha_{ijk}\right)$ at the categorical level, we cannot infer the existence of gauge transformation $G_{ij}$ and twist transformation $T_{ij}$ at the algebraic level.

\subsection{Transition on common refinement of different covering families}
\begin{definition}
    Let $\mathscr{P}:U\longmapsto \text{BRep}(\mathscr{H}_U)$ be a pseudofunctor. For any covering family $\mathcal{U}=\left\{U_i \longrightarrow U\right\}_{i\in I}$, the bicategory $\text{Des}_\mathscr{P}(\mathcal{U})$ of 2-descent datum relative to $\mathcal{U}$ consists of
    \begin{enumerate}
        \item Objects are 2-descent datum $x=\left(\mathcal{C}_i,g_{ij},\alpha_{ijk}\right)$, also called \textit{$\mathscr{P}$-descent datum} (or just descent datum for short),
        \item 1-morphisms that send $x=\left(\mathcal{C}_i,g_{ij},\alpha_{ijk}\right) \longrightarrow x'=\left(\mathcal{C}'_i,g'_{ij},\alpha'_{ijk}\right)$ is defined as
        \begin{equation}
            \begin{array}{rccc}
\psi_i: & \mathcal{C}_i & \longrightarrow & \mathcal{C}'_i \\
 \Lambda_{ij}:  &g'_{ij} \psi_j& \longrightarrow  & \psi_i g'_{ij},
\end{array}
        \end{equation}
        \item 2-morphisms $\Theta_i: \psi_i \longrightarrow \psi'_i$,
    \end{enumerate}
    such that the following diagrams commute.
\end{definition}

\begin{figure}[H]
\centering
\begin{minipage}{0.35\textwidth}
\centering
\begin{tikzpicture}[    
node distance=2.0cm and 2.4cm,     
obj/.style={font=\large},     
arrowlabel/.style={font=\small,inner sep=1pt},     
arrow/.style={-{To[length=7pt,width=9pt]},line width=0.6pt} ]

\node[obj] (A) at (0,0) {$\mathcal{C}_j$};

\node[obj] (B) at (0,-2) {$\mathcal{C}'_j$};

\node[obj] (C) at (2.2,0) {$\mathcal{C}_i$};

\node[obj] (D) at (2.2,-2) {$\mathcal{C}'_i$};

\draw[arrow] (A) -- node[left, arrowlabel] {$\psi_j$} (B);

\draw[arrow] (A) -- node[above, arrowlabel] {$g_{ij}$} (C);

\draw[arrow] (B) -- node[below, arrowlabel] {$g'_{ij}$} (D);

\draw[arrow] (C) -- node[right, arrowlabel] {$\psi_i$} (D);

\end{tikzpicture}
\end{minipage}
\hfill
\begin{minipage}{0.50\textwidth}
\centering
\begin{tikzpicture}[    
node distance=2.0cm and 2.4cm,     
obj/.style={font=\large},     
arrowlabel/.style={font=\small,inner sep=1pt},     
arrow/.style={-{To[length=7pt,width=9pt]},line width=0.6pt},twoarrow/.style={-{Implies[length=7pt,width=9pt]},double,double distance=1.2pt,line width=0.4pt} ]

\node[obj] (A) at (0,0) {$\mathcal{C}_j$};

\node[obj] (B) at (0,-2) {$\mathcal{C}'_j$};

\node[obj] (C) at (3,0) {$\mathcal{C}_i$};

\node[obj] (D) at (3,-2) {$\mathcal{C}'_i$};

\node[obj] (E) at (-0.4,-1) {};

\node[obj] (F) at (0.4,-1) {};

\node[obj] (G) at (2.6,-1) {};

\node[obj] (H) at (3.4,-1) {};

\draw[arrow] (A) to[out=-150,in=150] node[left,arrowlabel,pos=0.48] {$\psi_j$} (B);

\draw[arrow] (A) to[out=-30,in=30] node[right,arrowlabel,pos=0.48] {$\psi'_j$} (B);

\draw[arrow] (C) to[out=-150,in=150] node[left,arrowlabel,pos=0.48] {$\psi_i$} (D);

\draw[arrow] (C) to[out=-30,in=30] node[right,arrowlabel,pos=0.48] {$\psi'_i$} (D);

\draw[arrow] (A) -- node[above, arrowlabel] {$g_{ij}$} (C);

\draw[arrow] (B) -- node[below, arrowlabel] {$g'_{ij}$} (D);

\draw[twoarrow] (E) -- node[below, arrowlabel] {$\Theta_j$} (F);

\draw[twoarrow] (G) -- node[below, arrowlabel] {$\Theta_i$} (H);

\end{tikzpicture}
\end{minipage}
\end{figure}
\begin{definition}\label{def:biequivalence}
    Let $x$ and $y$ be objects in bicategory $\text{Des}_\mathscr{P}(\mathcal{U})$. A 1-morphism $P:x \longrightarrow y$ is called an object biequivalence between $x$ and $y$ if there exists a 1-morphism $Q:y\longrightarrow x$ such that $QP \cong \text{id}_x$ and $PQ \cong \text{id}_y$.
\end{definition}

For any covering family $\mathcal{U}=\left\{U_i \longrightarrow U\right\}_{i\in I}$ of $U$, we define the refinement of $\mathcal{U}$ is a covering family $\mathcal{V}=\left\{V_a \longrightarrow U\right\}_{a\in A}$ if and only if there is a $U_i \in \mathcal{U}$ such that $V_a \subseteq  U_i$ for any $a \in A$. The refinement of $\mathcal{U}$ by $\mathcal{V}$ is denoted by an arrow $r:\mathcal{V} \longrightarrow \mathcal{U},a\longmapsto i(a)$. A refinement $r:\mathcal{V} \longrightarrow \mathcal{U}$ induces a 2-functor 
\begin{equation}
   \begin{array}{rccc}
r^*: & \text{Des}_\mathscr{P}(\mathcal{U})& \longrightarrow  & \text{Des}_\mathscr{P}(\mathcal{V}), \\
 &\left(\mathcal{C}_i,g_{ij},\alpha_{ijk}\right) & \longmapsto  & \left({\mathcal{C}_{i(a)}}|_{V_a},g_{i(a)j(b)}|_{V_{ab}},\alpha_{i(a)j(b)k(c)}|_{V_{abc}}\right),
 \\
 & \left(\psi_i,\Lambda_{ij}\right) &\longmapsto & \left(\psi_{i(a)}|_{V_a},\Lambda_{i(a)j(b)}|_{V_{ab}}\right), \\
 & \Theta_i &\longmapsto & \Theta_{i(a)}|_{V_a}.
\end{array}
\end{equation}
For any two covers $\mathcal{U}=\left\{U_i \longrightarrow U\right\}_{i\in I}$ and $\mathcal{V}=\left\{V_j \longrightarrow U\right\}_{j\in J}$, the common refinement of $\mathcal{U}$ and $\mathcal{V}$ is given by
\begin{equation}
    \mathcal{U} \wedge \mathcal{V} = \left\{U_i \cap V_j \longrightarrow U\right\}_{i\in I,j\in J},\qquad U_i \cap V_j\neq \varnothing \quad \forall i\in I,j\in J.
\end{equation}
The refinement maps $r: \mathcal{U} \wedge \mathcal{V}\longrightarrow \mathcal{U}$ and $s: \mathcal{U} \wedge \mathcal{V} \longrightarrow \mathcal{V}$ allow us to compare the descent datum that lies in different covering families. By taking the pullbacks of object $x$ in $\text{Des}_\mathscr{P}(\mathcal{U})$ and $y$ in $\text{Des}_\mathscr{P}(\mathcal{V})$, we have the descent datum $r^*x$ and $s^* y$ in $\text{Des}_\mathscr{P}( \mathcal{U} \wedge \mathcal{V} )$, which can be compared, for example, using Definition \ref{def:biequivalence}.

Next, we define a class of descent representations over an arbitrary open coordinate domain $U$, including $U$ that is nonadmissible. Let $\text{Obj}\left(\text{Des}_\mathscr{P}(\mathcal{U})\right)$ be the collection of objects in bicategory $\text{Des}_\mathscr{P}(\mathcal{U})$ relative to covering family $\mathcal{U}$, a class of descent representations over $U$ is given by
\begin{equation}
    \text{DRep}_\mathscr{P}(U) := \coprod\limits_{\mathcal{U}\in\text{ACov}(U)} {\text{Obj}\left(\text{Des}_\mathscr{P}(\mathcal{U})\right)} ,
\end{equation}
where $\text{ACov}(U)$ is the category of admissible covers of $U$ that contains
\begin{equation}
    \text{Obj}\left(\text{ACov}(U)\right) = \left\{\mathcal{U}=\left\{U_i\longrightarrow U\right\}_{i\in I}\bigg|U_i \in \text{Loc}\mathcal{M},\bigcup\limits_{i\in I}U_i = U\right\},
\end{equation}
and morphisms are refinements. Thus, $\text{DRep}_\mathscr{P}(U)$ does not depend on the choice of covering $\mathcal{U}$. The elements of $\text{DRep}_\mathscr{P}(U)$, called \textit{descent representations}, are pairs of a covering family $\mathcal{U}$ and an object in the bicategory of descent datum relative to it.

\begin{theorem}\label{theorem:modulo equivalence relation}
    Suppose there exists a common refinement $\mathcal{U} \wedge \mathcal{V}$ of $\mathcal{U}$ and $\mathcal{V}$ with refinement maps $r:\mathcal{U} \wedge \mathcal{V} \longrightarrow \mathcal{U}$ and $s:\mathcal{U} \wedge \mathcal{V} \longrightarrow \mathcal{V}$. The biequivalence in $\text{Des}_\mathscr{P}(\mathcal{U} \wedge \mathcal{V})$ defines an equivalence relation on $\text{DRep}_\mathscr{P}(U)$.
\end{theorem}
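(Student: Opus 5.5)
We define the relation on $\text{DRep}_\mathscr{P}(U)$ by declaring $(\mathcal{U},x)\sim(\mathcal{V},y)$ whenever $r^*x$ and $s^*y$ are biequivalent objects of $\text{Des}_\mathscr{P}(\mathcal{U}\wedge\mathcal{V})$ in the sense of Definition \ref{def:biequivalence}. We then verify reflexivity, symmetry and transitivity in turn.

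Two preliminary facts will be used throughout. First, biequivalence inside a single bicategory $\text{Des}_\mathscr{P}(\mathcal{W})$ is itself an equivalence relation. Identities serve for reflexivity, and the pair $(P,Q)$ is symmetric in $P$ and $Q$. For transitivity we compose $P'P$ and $QQ'$ and use whiskering of the invertible 2-cells to get $QQ'P'P\cong Q\,\text{id}\,P\cong \text{id}$. Second, each refinement pullback $r^*$ is a 2-functor, so it sends biequivalences to biequivalences and preserves identities and composites up to coherent invertible 2-cells.

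\textbf{Reflexivity and symmetry.} For reflexivity we take $\mathcal{V}=\mathcal{U}$. Then $\mathcal{U}\wedge\mathcal{U}$ refines $\mathcal{U}$ through both projections $r,s:(i,j)\mapsto i$ and $(i,j)\mapsto j$. The data $r^*x$ and $s^*x$ on $U_i\cap U_j$ differ only by which chart index is used. The transition 1-arrows $g_{ij}$, together with the 2-arrows $\alpha$ (including the unit instances $\alpha_{iij}$), assemble into a 1-morphism $(\psi,\Lambda)$ from $r^*x$ to $s^*x$. The tetrahedral 2-descent condition makes this 1-morphism compatible and invertible up to 2-cells. Symmetry is immediate, since $\mathcal{U}\wedge\mathcal{V}\cong\mathcal{V}\wedge\mathcal{U}$ canonically and biequivalence is symmetric.

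\textbf{Transitivity.} This is the main obstacle. Suppose $(\mathcal{U},x)\sim(\mathcal{V},y)$ via $\mathcal{U}\wedge\mathcal{V}$ and $(\mathcal{V},y)\sim(\mathcal{W},z)$ via $\mathcal{V}\wedge\mathcal{W}$. We pass to the triple refinement $\mathcal{U}\wedge\mathcal{V}\wedge\mathcal{W}$, which refines both pairwise refinements. Pulling back the two biequivalences along these refinements gives two biequivalences there, by the second preliminary fact, and composing them gives $x|\simeq z|$ on the triple refinement.

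The delicate point is to descend this back to $\mathcal{U}\wedge\mathcal{W}$. The plan is to prove a lemma: if two descent data on $\mathcal{W}'$ become biequivalent after pulling back along a refinement $\mathcal{W}''\to\mathcal{W}'$, then they were already biequivalent on $\mathcal{W}'$. We would attempt this by using the descent/gluing property of the prestack for 1- and 2-morphisms, namely that $\psi_i$, $\Lambda_{ij}$ and $\Theta_i$ defined locally on a cover of each $U_i\cap W_k$ and agreeing on overlaps glue uniquely up to the coherence supplied by the $\alpha$'s.

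If that gluing of morphisms is not available at the prestack level, we instead define the relation as the equivalence relation generated by common-refinement biequivalence. Equivalently, we allow comparison on any common refinement, not only on $\mathcal{U}\wedge\mathcal{V}$. Transitivity then holds by construction via the triple refinement. We would also check independence of the chosen refinement maps, using the reflexivity argument: two refinement maps into the same cover induce biequivalent pullbacks.
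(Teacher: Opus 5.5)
Your proposal is correct once you commit to your fallback. It has the same skeleton as the paper's proof: reflexivity, symmetry, and transitivity obtained by pulling both biequivalences back to a further common refinement and composing. Where it differs is that you handle two points the paper passes over.

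First, the paper's transitivity step only produces a biequivalence on $\mathcal{Z}=(\mathcal{U}\wedge\mathcal{V})\wedge(\mathcal{V}\wedge\mathcal{W})$, not on $\mathcal{U}\wedge\mathcal{W}$. Its reflexivity step also uses the identity refinement of $\mathcal{U}$ rather than $\mathcal{U}\wedge\mathcal{U}$. So the paper is tacitly working with the relation ``biequivalent after pullback to some common refinement'', which is exactly your fallback.

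Second, the paper identifies $u^*s^*y$ with $t^*p^*y$ merely because both are pullbacks of $y$. In fact they are pullbacks along two different index maps $\mathcal{Z}\to\mathcal{V}$, namely $((i,j),(j',k))\mapsto j$ and $((i,j),(j',k))\mapsto j'$. Your triple refinement $\mathcal{U}\wedge\mathcal{V}\wedge\mathcal{W}$ makes the two composite maps to $\mathcal{V}$ coincide, so this issue disappears. Your $\mathcal{U}\wedge\mathcal{U}$ construction, built from the $g_{ij}$ and the (necessarily invertible) $\alpha_{ijk}$, is precisely the refinement-independence lemma the paper silently uses. You also need that lemma whenever the witnessing common refinements are arbitrary rather than the canonical ones.

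You should drop your first option for transitivity. Descending a biequivalence from a finer cover back to $\mathcal{U}\wedge\mathcal{W}$ would require gluing 1- and 2-morphisms for $\mathscr{P}$. Here $\mathscr{P}$ is only a pseudofunctor and is not assumed to have that property; supplying it is the job of the stackification.

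Also avoid phrasing the fallback as ``the equivalence relation generated by'' common-refinement biequivalence, since that makes the theorem tautological. The substantive content is that ``biequivalent on some common refinement'' is already transitive, and your triple-refinement argument, together with refinement independence, is what shows this.
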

\begin{proof}
    Let $(\mathcal{U},x)$ and $(\mathcal{V},y)$ be two descent representations in $\text{DRep}_\mathscr{P}(U)$. The corresponding pullback descent datum is given by objects $r^*x$ and $s^*y$ in $\text{Des}_\mathscr{P}(\mathcal{U} \wedge \mathcal{V})$ such that $P:r^*x\longrightarrow s^*y$ is a biequivalence. We have the equivalence relation $(\mathcal{U},x) \sim_{\text{ref,bieq}} (\mathcal{V},y)$ on $\text{DRep}_\mathscr{P}(U)$.
    \begin{enumerate}
        \item Reflexivity: Taking the common refinement to be $\mathcal{U}$ itself, $\text{id}_\mathcal{U}:\mathcal{U} \longrightarrow \mathcal{U}$. The biequivalence is given by $\text{id}^*_\mathcal{U}: x \longrightarrow x$. Hence, $(\mathcal{U},x) \sim_{\text{ref,bieq}} (\mathcal{U},x)$.
        \item Symmetry: Assume $(\mathcal{U},x) \sim_{\text{ref,bieq}} (\mathcal{V},y)$ with a biequivalence $P:r^* x \longrightarrow s^*y$. Thus, there exists a biequivalence $Q:s^*y \longrightarrow r^*x$ such that $QP \cong \text{id}_x$ and $PQ \cong \text{id}_y$, which yields $(\mathcal{V},y)\sim_{\text{ref,bieq}} (\mathcal{U},x)$.
        \item Transitivity: Assume $(\mathcal{U},x) \sim_{\text{ref,bieq}} (\mathcal{V},y)$ and $(\mathcal{V},y) \sim_{\text{ref,bieq}} (\mathcal{W},z)$ refer to common refinements $\mathcal{U} \wedge \mathcal{V}$ with refinement maps $r,s:\mathcal{U} \wedge \mathcal{V}\longrightarrow \mathcal{U},\mathcal{V}$ and $\mathcal{V}\wedge \mathcal{W}$ with refinement maps $p,q:\mathcal{V} \wedge \mathcal{W}\longrightarrow \mathcal{V},\mathcal{W}$, respectively. Biequivalence on $\text{Des}_\mathscr{P}(\mathcal{U} \wedge \mathcal{V})$ is $F$ while biequivalence on $\text{Des}_\mathscr{P}(\mathcal{V} \wedge \mathcal{W})$ is $G$. Choose the common refinement $\mathcal{Z}=(\mathcal{U} \wedge \mathcal{V})\wedge (\mathcal{V} \wedge \mathcal{W})$ with refinement maps $u:\mathcal{Z} \longrightarrow \mathcal{U} \wedge \mathcal{V}$ and $t:\mathcal{Z} \longrightarrow \mathcal{V} \wedge \mathcal{W}$, we have
        \begin{equation}
            u^* r^* x \mathop{\longrightarrow}^{u^*F} u^*s^*y, \quad \text{and}\quad  t^* p^*y\mathop{\longrightarrow}^{t^*G} t^*q^*z.
        \end{equation}
        Both $u^*s^*y$ and $t^* p^*y$ are the pullbacks of the same descent datum $y$ into the common refinement $\mathcal{Z}$, thus, there is a biequivalence $u^*s^*y \cong t^* p^*y$ in $\text{Des}_\mathscr{P}(\mathcal{Z})$, which implies the equivalence relation $(\mathcal{U},x) \sim_{\text{ref,bieq}} (\mathcal{W},z)$. 
    \end{enumerate}
    Therefore, $\sim_{\text{ref,bieq}}$ is an equivalence relation.
\end{proof}
Theorem \ref{theorem:modulo equivalence relation} allows us to classify descent data that consists of different covering families by comparing the descent data over common refinements. A refinement gives a more detailed atlas, not a new geometry. Therefore, two presentations represent the same stacky object precisely when they agree, up to equivalence, after both are pulled back to a common refinement. 

\subsection{Stacky description of $R$-space}
\begin{definition}
    A bicategory is called a \textit{bicategory-valued stack} (or \textit{weak 2-stack} over site $\mathscr{S}$ if it is a bicategory-valued prestack (weak 2-prestack) fibered over site $\mathscr{S}$ such that every 2-descent datum is effective and the 2-functor which associates to an object its canonical descent datum is an equivalence.
\end{definition}
However, there are some problems we might have to face. Firstly, requiring every descent datum to be effective is not natural. Secondly, one requires that all the descent data are isomorphic to the restriction of some global objects, which may include the non-admissible coordinate domains. Thus, we need to reconstruct such global descriptions of $R$-space that associate to non-admissible coordinate domains by gluing the local descent data instead of assigning any non-admissible coordinate domain to a representation bicategory. That is why we have to define the class of descent representations over an arbitrary open coordinate domain $U$ and the equivalence relation on it (see Theorem \ref{theorem:modulo equivalence relation}). 
\begin{definition}
    The 2-stackification of a bicategory-valued prestack $\mathscr{P}:U_i\mapsto \text{BRep}(\mathscr{H}_{U_i})$ over a arbitrary open coordinate domain $U$, is given by
    \begin{equation}
        \mathscr{P}^+(U) := \left( \coprod\limits_{\mathcal{U}\in\text{ACov}(U)} {\text{Des}_\mathscr{P}(\mathcal{U})} \right)\bigg/\sim_{\text{ref,bieq}} = \mathop{2\text{-colim}}\limits_{\mathcal{U}\in\text{ACov}(U)}{\text{Des}_\mathscr{P}(\mathcal{U})}.
    \end{equation}
\end{definition}
\begin{proposition}
    Let $\mathscr{S}_{\text{all}}=(\text{Open}\mathcal{M},J)$ be the site, where $\text{Open}\mathcal{M} \supset \text{Loc}\mathcal{M}$ is the category of all open coordinate domains $U \subset \mathcal{M}$. Then,
    \begin{equation}
        \begin{array}{rccl}
\mathscr{P}^+: & \mathscr{S}_{\text{all}}^{\text{op}}& \longrightarrow & \textbf{BCatRe}, \\
 & U & \longmapsto  & \left\{\begin{array}{cl}
      \text{BRep}(\mathscr{H}_U)&\text{if } U \in \text{Loc}\mathcal{M},  \\
      \mathop{2\text{-colim}}\limits_{\mathcal{U}\in\text{ACov}(U)}{\text{Des}_\mathscr{P}(\mathcal{U})}&\text{if } U \in \text{Open}\mathcal{M}\backslash\text{Loc}\mathcal{M}
 \end{array}\right.,
\end{array}
    \end{equation}
    is a pseudofunctor.
\end{proposition}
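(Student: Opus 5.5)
To prove that $\mathscr{P}^+$ is a pseudofunctor on $\mathscr{S}_{\text{all}}^{\text{op}}$, I would first define its action on morphisms, then supply coherence 2-cells for composition and identities, and finally check the pseudofunctor axioms. The only genuinely new ingredient is restriction along an inclusion $V\subset U$ where $U$ or $V$ is non-admissible. On $\text{Loc}\mathcal{M}$ the value $\mathscr{P}^+(U)=\text{BRep}(\mathscr{H}_U)$ and the restrictions $\mathscr{P}(\rho_{UV})$ are those of the weak 2-prestack $\mathscr{P}$, which is already a pseudofunctor.

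The first step treats an inclusion $\iota_{VU}:V\hookrightarrow U$ with $U$ non-admissible. Given a descent representation $(\mathcal{U},x)$ with $\mathcal{U}=\{U_i\to U\}$, I would form the pulled-back cover $\mathcal{U}|_V=\{U_i\cap V\to V\}$, which is a covering family by axiom (2) in the proof that $J$ is a Grothendieck topology. The sets $U_i\cap V$ need not be admissible, so I would refine $\mathcal{U}|_V$ by an admissible cover $\mathcal{V}$ of $V$. Such a refinement exists because admissible coordinate domains form a basis of the topology. Then I would set $\rho^+_{UV}(\mathcal{U},x):=(\mathcal{V},r^*x)$, using the refinement 2-functor $r^*$ of the previous subsection. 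When $V$ is admissible, I would compose further with the canonical comparison 2-functor $\text{BRep}(\mathscr{H}_V)\to\text{Des}_\mathscr{P}(\{V\to V\})$ and its inverse given by effectiveness on the trivial cover.

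The second step is well-definedness on the quotient $\sim_{\text{ref,bieq}}$. Different choices of $\mathcal{V}$, or of representatives related through a common refinement, must give equivalent outputs. This follows by pulling everything back to a further common refinement and arguing as in the transitivity part of Theorem \ref{theorem:modulo equivalence relation}. There I would use that biequivalences are preserved by $r^*$, since $r^*$ is a 2-functor and therefore preserves the invertibility of $QP\cong\text{id}$ and $PQ\cong\text{id}$. The 1- and 2-morphisms $(\psi_i,\Lambda_{ij})$ and $\Theta_i$ are restricted in the same way, which gives a 2-functor $\rho^+_{UV}$ on the 2-colimit. Equivalently, I would obtain $\rho^+_{UV}$ from the universal property of $\mathop{2\text{-colim}}$ by checking that the family $\mathcal{U}\mapsto \text{Des}_\mathscr{P}(\mathcal{U})\xrightarrow{r^*}\text{Des}_\mathscr{P}(\mathcal{V})\to\mathscr{P}^+(V)$ is pseudonatural in refinements.

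The third step provides the pseudofunctor structure. For $W\subset V\subset U$, I need an invertible 2-cell $\gamma_{UVW}:\rho^+_{VW}\circ\rho^+_{UV}\Rightarrow\rho^+_{UW}$. Both sides are pullbacks of $x$ to two admissible covers of $W$, which are related through their common refinement. The canonical equivalence $t^*u^*\cong (ut)^*$ coming from restricting $\alpha_{ijk}$ along nested intersections supplies $\gamma$. The unit 2-cell $\rho^+_{UU}\cong\text{id}$ comes from reflexivity in Theorem \ref{theorem:modulo equivalence relation}.

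The main obstacle I expect is verifying the associativity coherence, the modification condition for four nested opens $X\subset W\subset V\subset U$, together with compatibility at the boundary between admissible and non-admissible opens. My first attempt would reduce this to the corresponding coherence of the pseudofunctor $\mathscr{P}$ on $\text{Loc}\mathcal{M}$ and to strict functoriality of refinement maps. Since all restriction data are pulled back along inclusions of intersections, and $U_{i_1}\times_{\mathcal{U}}\cdots\cong\bigcap U_{i_j}$, both composite 2-cells should be restrictions of the same $\alpha_{ijk}$ to the finest common refinement. The remaining difficulty is the mixed case, which requires showing that $\text{BRep}(\mathscr{H}_U)\to\mathop{2\text{-colim}}\text{Des}_\mathscr{P}$ is an equivalence for admissible $U$. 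For this I would use the trivial cover $\{U\to U\}$, which is terminal among covers up to refinement, so that the 2-colimit is computed there.
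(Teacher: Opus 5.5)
Your plan follows the paper's proof. You restrict along $V\subset U$ by pulling the admissible cover back to $\{U_i\cap V\to V\}$, refining if necessary, and applying $r^*$. The unit and composition 2-cells come from the canonical equivalences $\text{id}^*\cong\text{id}$ and $t^*u^*\cong(ut)^*$; the paper's proof is sketchier still and does not check the pseudofunctor coherence axioms. The genuine gap is the one you flag yourself: a non-admissible $U$ with an admissible $V\subset U$, where the target is $\mathscr{P}^+(V)=\text{BRep}(\mathscr{H}_V)$ rather than a 2-colimit. Your proposed resolution does not work, for two reasons.

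\begin{itemize}
\item \emph{The trivial cover is the wrong cover.} The restricted datum lands in $\text{Des}_\mathscr{P}(\mathcal{V})$, where $\mathcal{V}$ is an admissible refinement of $\{U_i\cap V\to V\}$. The cover $\{V\to V\}$ refines this only if $V$ lies inside a single $U_i$. So the equivalence $\text{Des}_\mathscr{P}(\{V\to V\})\simeq\text{BRep}(\mathscr{H}_V)$ does not help. You would need a 2-functor $\text{Des}_\mathscr{P}(\mathcal{V})\to\text{Des}_\mathscr{P}(\{V\to V\})$, which means gluing along $\mathcal{V}$.
\item \emph{The 2-colimit is not computed at the trivial cover.} The transition 2-functors $r^*:\text{Des}_\mathscr{P}(\mathcal{U})\to\text{Des}_\mathscr{P}(\mathcal{V})$ run from coarser to finer covers. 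The coarsest cover $\{V\to V\}$ is terminal in $\text{ACov}(V)$ with its arrows $r:\mathcal{V}\to\mathcal{U}$. It is therefore the \emph{initial} stage of the diagram being colimited. A colimit over a shape with a terminal object is its value there, but an initial object gives nothing of the kind. Compare the plus construction for presheaves, where $F(V)\to F^+(V)$ is an isomorphism only when $F$ is already a sheaf.
\end{itemize}

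Essential surjectivity of $\text{BRep}(\mathscr{H}_V)\simeq\text{Des}_\mathscr{P}(\{V\to V\})\to\mathop{2\text{-colim}}_{\mathcal{W}\in\text{ACov}(V)}\text{Des}_\mathscr{P}(\mathcal{W})$ says exactly this: every $\mathscr{P}$-descent datum on an admissible cover of $V$ becomes, after refinement, biequivalent to one pulled back from $\{V\to V\}$. That is an effectiveness property of $\mathscr{P}$ on admissible domains, essentially that $\mathscr{P}$ is already a 2-stack on $\text{Loc}\mathcal{M}$. Nothing establishes this, and the paper explicitly calls requiring effectiveness unnatural.

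The paper's proof does not supply the missing argument either. It writes the final arrow $\mathop{2\text{-colim}}_{\mathcal{U}|_V\in\text{ACov}(V)}\text{Des}_\mathscr{P}(\mathcal{U}|_V)\xrightarrow{\cong}\mathscr{P}^+(V)$, which is unjustified precisely when $V$ is admissible.

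To get a provable statement you have to change it, in one of two ways:
\begin{itemize}
\item Define $\mathscr{P}^+(U)$ as the 2-colimit for \emph{every} open $U$, admissible or not. Then all restrictions are of the single pull-back-and-refine type, the mixed case disappears, and your coherence argument goes through. For admissible $U$ you keep only a canonical, not necessarily invertible, comparison $\text{BRep}(\mathscr{H}_U)\to\mathscr{P}^+(U)$.
\item Alternatively, add the hypothesis that $\mathscr{P}$ satisfies effective 2-descent for admissible covers of admissible domains, which makes the piecewise definition consistent.
\end{itemize}
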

\begin{proof}
    If $U \in \text{Loc}\mathcal{M}$, we have a natural embedding 
    \begin{equation}
        \iota_U: \mathscr{P}(U)  \longrightarrow \mathscr{P}^+(U).
    \end{equation}
    Therefore, for any $U \in \text{Loc}\mathcal{M}$, $U \longmapsto \mathscr{P}^+(U)=\mathscr{P}(U)$ is a pseudofunctor, which is compatible with the inclusion $\iota_{UV}:V\xhookrightarrow{} U$ by 
    \begin{equation}
        \text{BRep}(\mathscr{H}_U)=\mathscr{P}^+(U)\mathop{\longrightarrow}\limits^{\iota^*_{UV}} \mathscr{P}^+(U|_V)= \text{BRep}(\mathscr{H}_{U}|_V)\mathop{\longrightarrow}\limits^{\cong} \text{BRep}(\mathscr{H}_{V})= \mathscr{P}^+(V).
    \end{equation}
    for any $V\subset U \in \text{Loc}\mathcal{M}$.
If $U \in \text{Open}\mathcal{M}\backslash\text{Loc}\mathcal{M}$, an object of $\mathscr{P}^+(U)$ is represented by a descent datum over some admissible cover $\mathcal{U}=\left\{U_i\longrightarrow U\right\}_{i \in I}$. By pulling this cover back to $V$, and refining if necessary, one obtains an admissible coordinate cover $\mathcal{U}|_V=\left\{U_i \cap V \longrightarrow V\right\}$ because $\bigcup\limits_{i\in I}U_i \cap V = U \cap V=V$. Hence, $\mathcal{U}|_V \in \text{ACov}(V)$ and
    \begin{equation}
        \mathscr{P}^+(U)\mathop{\longrightarrow}\limits^{\iota^*_{UV}}\mathscr{P}^+(U|_V)=\mathop{2\text{-colim}}\limits_{\mathcal{U}|_V\in\text{ACov}(V)}{\text{Des}_\mathscr{P}(\mathcal{U}|_V)} \mathop{\longrightarrow}\limits^{\cong} \mathscr{P}^+(V).
    \end{equation}
We also have identity $\text{id}_U:U\xhookrightarrow{} U$ corresponding to canonical equivalence $\text{id}_U^* \cong \text{id}_{\mathscr{P}^+(U)}$ and the composition $W \mathop{\xhookrightarrow{}}\limits^{\iota_{UV}}V \mathop{\xhookrightarrow{}}\limits^{\iota_{VW}} U$ corresponding to $\iota^*_{VW}\iota^*_{UV} \cong (\iota_{UV}\circ \iota_{VW})^*$. Therefore, $\mathscr{P}^+$ is a pseudofunctor.
\end{proof}

\begin{theorem}\label{theorem:stackification}
    For any cover $\mathcal{U}=\left\{U_i\longrightarrow U\right\}_{i\in I}$, there exists a bicategory equivalence $\text{res}_\mathcal{U}:\mathscr{P}^+(U) \mathop{\longrightarrow}\limits^{\cong} \text{Des}_{\mathscr{P}^+}(\mathcal{U})$. Therefore, the pseudofunctor $\mathscr{P}^+:\mathscr{S}_{\text{all}}^{\text{op}}\longrightarrow \textbf{BCatRe}$ defines a bicategory-valued stack (weak 2-stack).
\end{theorem}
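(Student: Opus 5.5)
The strategy is the standard one for stackification: build $\text{res}_\mathcal{U}$ and a quasi-inverse $\text{glue}_\mathcal{U}$, and show both composites are equivalent to identities. Everything is reduced to admissible covers, where objects of $\mathscr{P}^+$ are honest $\mathscr{P}$-descent data.

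\emph{Construction of the restriction 2-functor.} An object of $\mathscr{P}^+(U)$ is a class $[(\mathcal{V},x)]$ with $\mathcal{V}\in\text{ACov}(U)$ and $x\in\text{Des}_\mathscr{P}(\mathcal{V})$. For each $i$, the restricted cover $\mathcal{V}|_{U_i}=\{V_a\cap U_i\}$ lies in $\text{ACov}(U_i)$, as in the proof that $\mathscr{P}^+$ is a pseudofunctor. We therefore set
\[
X_i:=[(\mathcal{V}|_{U_i},x|_{U_i})]\in\mathscr{P}^+(U_i).
\]
On $U_{ij}$ both $X_i|_{U_{ij}}$ and $X_j|_{U_{ij}}$ are represented by the same datum on $\mathcal{V}|_{U_{ij}}$. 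The transition $g_{ij}$ is the canonical biequivalence given by Theorem~\ref{theorem:modulo equivalence relation}, realized as the identity on the common refinement, and $\alpha_{ijk}$ is the canonical coherence 2-cell. The tetrahedral condition then holds because all these cells come from the pseudofunctoriality constraints $\iota^*_{VW}\iota^*_{UV}\cong(\iota_{UV}\iota_{VW})^*$. Independence of the representative follows since a biequivalence on a common refinement restricts to one on each $U_i$.

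\emph{Quasi-inverse by gluing.} Given $(X_i,g_{ij},\alpha_{ijk})\in\text{Des}_{\mathscr{P}^+}(\mathcal{U})$, first choose representatives $X_i=[(\mathcal{V}^i,x^i)]$ with $\mathcal{V}^i\in\text{ACov}(U_i)$. Next, refine the overlaps so that each $g_{ij}$ is realized by a biequivalence of $\mathscr{P}$-descent data on a common admissible refinement of $\mathcal{V}^i|_{U_{ij}}$ and $\mathcal{V}^j|_{U_{ij}}$, and each $\alpha_{ijk}$ by a modification on a triple refinement. The union $\mathcal{W}=\bigcup_i\mathcal{V}^i$, suitably refined, is an admissible cover of $U$. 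On it one assembles a single $\mathscr{P}$-descent datum:
\begin{enumerate}
\item local objects come from the $x^i$;
\item transitions between pieces with $a\in\mathcal{V}^i$ and $b\in\mathcal{V}^j$ are the composites of the internal transitions of $x^i$, $x^j$ with the realizing 1-morphism of $g_{ij}$;
\item the 2-cells are built from the internal $\alpha$'s of the $x^i$ together with $\alpha_{ijk}$.
\end{enumerate}
Define $\text{glue}_\mathcal{U}$ to be the class of this datum in the 2-colimit. This is essentially a "descent of descent data" argument, i.e.\ the transitivity of covers in the Grothendieck topology established for $\text{Loc}\mathcal{M}$.

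\emph{Equivalence and main obstacle.} $\text{res}_\mathcal{U}\circ\text{glue}_\mathcal{U}\simeq\text{id}$ holds because restricting the glued datum to $U_i$ returns $x^i$ up to the canonical refinement biequivalence. $\text{glue}_\mathcal{U}\circ\text{res}_\mathcal{U}\simeq\text{id}$ holds because gluing the restrictions of $(\mathcal{V},x)$ produces $x$ pulled back to $\mathcal{V}\wedge\mathcal{U}$, which is identified with $(\mathcal{V},x)$ by $\sim_{\text{ref,bieq}}$. The same reasoning applies on 1- and 2-morphisms, giving essential surjectivity on hom-categories and full faithfulness there; the latter uses that morphisms of descent data are themselves local and glue. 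When $U\in\text{Loc}\mathcal{M}$ one also needs $\mathscr{P}(U)\simeq\mathscr{P}^+(U)$, i.e.\ that $\iota_U$ is an equivalence. Here I would invoke Proposition~\ref{prop:gauge transformation}: on an admissible domain the global algebra $\mathscr{H}_U$ supplies the gauge and twist transformations trivializing any descent datum.

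The hardest step is the cocycle verification for the glued 2-cells, that is, showing the tetrahedral identity holds on $\mathcal{W}$. It mixes three layers of coherence: internal cells of the $x^i$, the $\alpha_{ijk}$, and the refinement biequivalences, which are only defined up to invertible modifications. I would first strictify, replacing $\mathcal{W}$ by a refinement on which all $g_{ij}$ are strict restrictions. Then I would check the identity face by face on quadruple overlaps using the pseudofunctor coherences.
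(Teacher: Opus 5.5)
Your skeleton matches the paper's. $\text{res}_{\mathcal{U}}$ pulls a representative back to each $U_i$ with identity transitions and coherence cells. The quasi-inverse (the paper's $\text{flat}_{\mathcal{U}}$, your $\text{glue}_{\mathcal{U}}$) represents the $X_i$, their transitions and their coherence cells by genuine $\mathscr{P}$-data on one common admissible refinement $\mathcal{W}$, and assembles a single $\mathscr{P}$-descent datum there. Both composites are then identified with identities via $\sim_{\text{ref,bieq}}$. Two of your steps, however, would fail as written.

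\textbf{The comparison $\iota_U$.} You claim that for admissible $U$ the map $\iota_U:\mathscr{P}(U)\to\mathscr{P}^+(U)$ is an equivalence, and justify it by Proposition~\ref{prop:gauge transformation}. That proposition only goes from given algebraic data $(G_{ij},T_{ij})$ to a 1-arrow $g_{ij}$. The paper explicitly notes that from a categorical descent datum $(\mathcal{C}_i,g_{ij},\alpha_{ijk})$ one cannot recover $G_{ij}$ and $T_{ij}$. So nothing lets $\mathscr{H}_U$ trivialize an arbitrary descent datum on an admissible cover of $U$. Moreover, if $\iota_U$ were an equivalence for every admissible $U$, then $\mathscr{P}$ would already be a stack on $\text{Loc}\mathcal{M}$, and stackification would be vacuous there. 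You are right that the piecewise formula for $\mathscr{P}^+$ seems to demand this comparison, but the paper's proof never uses it. In Appendix~\ref{appendix 3}, every object of $\mathscr{P}^+(U)$, admissible $U$ included, is treated as a $\sim_{\text{ref,bieq}}$-class of $\mathscr{P}$-descent data; the trivial cover $\{U\to U\}$ is just one admissible cover among others. $\iota_U$ serves only as the canonical map into that 2-colimit. Drop this step and work with the 2-colimit description uniformly.

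\textbf{The tetrahedral identity on $\mathcal{W}$.} Your proposed strictification does not work. Refinement can make each $g_{ij}$ and $\alpha_{ijk}$ \emph{representable} by genuine 1- and 2-morphisms of $\mathscr{P}$-descent data, which you already do. But refinement only restricts these data; it cannot turn a nontrivial equivalence, e.g.\ one induced by a nontrivial pair $(G_{ij},T_{ij})$, into a strict restriction. Nor is the mixed-case identity a formal consequence of pseudofunctor coherences.

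The missing input is the tetrahedral condition of the given $\alpha_{ijk}$ itself. It holds in $\mathscr{P}^+(U_{ijkl})$, hence only after passing to some admissible refinement. The paper chooses $\mathcal{W}$ fine enough that this equality is represented by an honest equality of 2-morphisms between $\mathscr{P}$-descent data. Then on a quadruple overlap $W_{mnlq}$ there are two cases:
\begin{enumerate}
\item if all four pieces lie in one $U_i$, the identity is the internal tetrahedral condition of the chosen representative $x^i$;
\item otherwise, the two composites are literally the restrictions to $W_{mnlq}$ of the two sides of that represented equality.
\end{enumerate}
With $\mathcal{W}$ chosen this way, no face-by-face bookkeeping of refinement biequivalences and no strictification is needed.
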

\begin{proof}
    See Appendix \ref{appendix 3}.
\end{proof} 

In conclusion, the stacky description developed above shows that the $R$-space should not be regarded as a conventional global geometry obtained by gluing coordinate domains through descent theory. Rather, its natural global object is a weak 2-stack whose local fibers are braided monoidal representation categories associated with quasitriangular quasi-Hopf algebras. In this sense, the noncommutative, nonassociative, and locally nongeometric features of the $R$-flux background are encoded into the categorical structure over coordinate domains of the underlying manifold $\mathcal{M}$. By gluing the local data within the framework of descent theory, our results provide a global and geometric-like definition of $R$-space without requiring the existence of a global algebra. The $R$-space over manifold $\mathcal{M}$ is given by
\begin{equation}
        \begin{array}{rccl}
\mathscr{P}^+: & \mathscr{S}_{\text{all}}^{\text{op}}& \longrightarrow & \textbf{BCatRe}, \\
 & U & \longmapsto  & \left\{\begin{array}{cl}
      \text{BRep}(\mathscr{H}_U)&\text{if } U \in \text{Loc}\mathcal{M},  \\
      \mathop{2\text{-colim}}\limits_{\mathcal{U}\in\text{ACov}(U)}{\text{Des}_\mathscr{P}(\mathcal{U})}&\text{if } U \in \text{Open}\mathcal{M}\backslash\text{Loc}\mathcal{M}
 \end{array}\right..
\end{array}
\end{equation}
Let $\mathcal{V}=\left\{V_i\longrightarrow U\right\}_{i\in I}$ be an admissible cover of $U$ (if $\mathcal{V}$ is not admissible, we continue to choose a family of covers for each $V_i$ until we obtain an admissible covering family). The restriction of $\mathop{2\text{-colim}}\limits_{\mathcal{U}\in\text{ACov}(U)}{\text{Des}_\mathscr{P}(\mathcal{U})}$ on $V_i$ is given by an equivalence class $\mathcal{A}=\left[\mathcal{W}, \mathcal{C}_{ia},g_{iab},\alpha_{iabc} \right]_{\text{ref},2\text{eq}}\in \mathscr{P}^+(V_i)$, where $\mathcal{W}$ is an admissible covering of $V_i$. A natural choice of $\mathcal{W}$ is $\{V_i \longrightarrow V_i\}$. Thus, this descent datum is biequivalent to a $\mathscr{P}$-descent datum relative to $\{V_i \longrightarrow V_i\}$, which encodes exactly the representation bicategory of a quasitriangular quasi-Hopf algebra, up to a biequivalence, on an admissible coordinate domain,
\begin{equation}
    \mathop{2\text{-colim}}\limits_{\mathcal{U}\in\text{ACov}(U)}{\text{Des}_\mathscr{P}(\mathcal{U})}\Big|_{V_i} \cong \left(\text{BRep}(\mathscr{H}_i),g_{ij},\alpha_{ijk}\right),
\end{equation}
without losing any information. For a $R$-space over an admissible coordinate domain $V\subset\mathcal{M}$, the $R$-space is simply a representation category of a quasitriangular quasi-Hopf algebra fibered over $V$. For a non-admissible open subset $U$, we cannot define a local $R$-space on $U$ by a single quasitriangular quasi-Hopf algebra $\mathscr{H}_U$. Instead, we define the $R$-space on $U$ by the quotient of the disjoint union of all categories of descent datum relative to an admissible covering family $\mathcal{U}$ of $U$ over a biequivalence relation. 

%%%%%%%%%%%%%%%%%%%%%%%%%%%%%%%%%%%%%%%%%%%%%%%%%%%%%%%%%%%%%%%%%%%%%%%%%%%%%%%%%%%
\section{Existence of admissible coordinate domains}\label{sec:Admissible coordinate domains}
In the previous section, we systematically derived a geometric and global definition for nongeometric $R$-space by using descent theory on the site of admissible coordinate domains. One may ask how to choose admissible coordinate domains, equivalently, how to choose an admissible covering family for any open set $U$. Suppose we can choose a collection of open subsets of $\mathcal{M}$, how to estimate whether an arbitrary subset is admissible or not? Technically, there is no way to do that if $\mathcal{M}$ is non-Hausdorff. However, if $\mathcal{M}$ is Hausdorff and metrizable, it admits a distance function $d:\mathcal{M}\times \mathcal{M} \longrightarrow \mathbb{R}$. The idea is to compare the distance between any two points with a value derived from physical constraints. In the physical aspect, all the phenomena of $R$-space, or string theory, are only observable at some scale. For example, we cannot experience in daily life the effects of quantum gravity, which are located at an extremely high energy scale, equivalently, an extraordinarily small length. Therefore, all the phenomena of $R$-space should be located at the scale that the noncommutative relation
\begin{equation}
    [x^\mu,x^\nu] = i\hbar \theta^{\mu\nu}
\end{equation}
works well. We note that $\hbar$ is just some formal constant to define the $\hbar$-adic topology of deformation quantization, which represents \textit{quantum} and leads to trivial commutative and associative theories at the classical limit $\hbar \to 0$. Suppose denoting $L$ to be the scale of length where the effects are observable. The physical scale length $L$ can adopt some candidates, for example, Planck length $l_P = \sqrt{\hbar G/c^3} \sim 10^{-28}\ \text{eV}^{-1}$, supersymmetry scale $l_s \sim10^{-25} - 10^{-27}\ \text{eV}^{-1}$, or just some undiscovered scale $\sqrt{\hbar \theta}$ refers to the noncommutative parameter $\theta$. 

To estimate the admissibility of any open domain $U$, we restrict our attention to coordinate domains that are sufficiently small relative to the characteristic length scale $L$. A possible definition of admissible open domains $U$ is given by
\begin{definition}\label{def:admissible}
    Let $L$ be the physical scale length, $\mathcal{M}$ be a Hausdorff and metrizable manifold. An open coordinate domain $U \subset \mathcal{M}$ is admissible if there exists $p \in \mathcal{M}$ such that $U  \subseteq B_{L/2}(p)$, where $B_{L/2}(p)=\left\{q\in \mathcal{M}:d(p,q)<L/2\right\}$ is an open ball centered at point $p$ with radius $L/2$.
\end{definition}
Any admissible coordinate domain $U$ can be described by a single quasitriangular quasi-Hopf algebra $\mathscr{H}_U$.

\begin{lemma}
Every open subset $O\subset \mathcal M$ admits an admissible covering family.
\end{lemma}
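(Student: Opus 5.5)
The plan is to build the cover pointwise, using the metric $d$ supplied by Definition \ref{def:admissible} together with the fact that $\mathcal{M}$ is a manifold, so every point has a chart neighbourhood. Fix an open subset $O\subset\mathcal{M}$. For each point $p\in O$ I will choose an open set $U_p$ with $p\in U_p\subseteq O$ that is simultaneously a coordinate domain and contained in $B_{L/2}(p)$. The family $\mathcal{U}=\{U_p\hookrightarrow O\}_{p\in O}$ will then be the desired admissible covering family.

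The construction of $U_p$ goes as follows. Since $\mathcal{M}$ is a topological manifold, there is a chart $(W_p,\varphi_p)$ with $p\in W_p$. Since $O$ is open and the metric $d$ induces the manifold topology, the set $O\cap W_p\cap B_{L/2}(p)$ is an open neighbourhood of $p$. I will set
\begin{equation*}
U_p := O\cap W_p\cap B_{L/2}(p).
\end{equation*}
This is an open subset of the chart domain $W_p$, so the restriction $\varphi_p|_{U_p}$ gives local coordinates on it. Hence $U_p$ is an open coordinate domain. By construction $U_p\subseteq B_{L/2}(p)$, so $U_p$ is admissible in the sense of Definition \ref{def:admissible}, and therefore $U_p\in\text{Loc}\mathcal{M}$. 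If one prefers connected domains, the same argument works after replacing $U_p$ by the connected component containing $p$, or by $\varphi_p^{-1}$ of a small Euclidean ball.

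It then remains to check that the family covers $O$. Each $U_p\subseteq O$, and each point $p\in O$ lies in its own $U_p$, so $\bigcup_{p\in O}U_p=O$. Hence $\mathcal{U}\in\text{Obj}(\text{ACov}(O))$. If a countable cover is desired, one can extract it using second countability (Lindelöf) of $\mathcal{M}$, although the definition of $\text{ACov}(O)$ does not require this.

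The only delicate point I expect is the compatibility between the metric topology and the manifold topology, that is, that each $B_{L/2}(p)$ is open in $\mathcal{M}$. This is exactly what ``metrizable'' provides, provided $d$ is chosen to be a metric inducing the given topology; I will state this assumption explicitly. The degenerate case $O=\varnothing$ is covered by the empty family.
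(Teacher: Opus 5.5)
Your proposal is correct and takes essentially the same approach as the paper. For each $p\in O$ you set $U_p = O\cap W_p\cap B_{L/2}(p)$ with $W_p$ a chart domain, observe that it is an admissible coordinate domain containing $p$, and conclude $\bigcup_{p} U_p = O$. Your explicit requirement that $d$ induce the manifold topology, so that $B_{L/2}(p)$ is open, is an assumption the paper's proof also relies on, though only implicitly.
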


\begin{proof}
Let $O\subset \mathcal M$ be an open subset. For each point $x\in O$, set $W_x:=O\cap B_{L/2}(x)$. Since $O$ is open and $B_{L/2}(x)$ is open in the topology induced by $d$, the set $W_x$ is an open neighborhood of $x$. Since $\mathcal M$ is a manifold, there exists a coordinate chart $(V_x,\varphi_x)$ with $x\in V_x$. Define
\begin{equation}
    U_x:=V_x\cap W_x=V_x\cap O\cap B_{L/2}(x).
\end{equation}
Then $U_x$ is open, $x\in U_x$, $U_x\subset O$, and $U_x\subset B_{L/2}(x)$. Moreover, $U_x$ is a coordinate domain, since the restriction $\varphi_x|_{U_x}:U_x\to \varphi_x(U_x)$ is again a coordinate chart. Thus $U_x$ is admissible by Definition \ref{def:admissible}. For any $y\in \bigcup\limits_{x\in O} U_x$, there exists $x_0 \in O$ such that $y\in U_{x_0}=V_{x_0}\cap O\cap B_{L/2}(x_0)$, then we have $y\in O$. Thus, $\bigcup\limits_{x\in O} U_x \subseteq  O$. On the other hand, for any $z\in O$, there exists an admissible coordinate domain $U_z$ in the family $\{U_x\}_{x\in O}$. Furthermore, for any $z \in O$, we have $z \in V_z$ because $V_z$ is a coordinate neighborhood of $z$. We also have $z \in B_{L/2}(z)$ because $d(z,z)=0<L/2$. Hence, $z \in V_z \cap O\cap B_{L/2}(z)=U_z$ for any $z\in O$, which implies $O \subseteq \bigcup\limits_{x\in O} U_x$. Therefore, $O=\bigcup\limits_{x\in O} U_x$, and $\left\{U_x \longrightarrow O\right\}_{x\in O}$ is an admissible covering family of open set $O$.

\end{proof}

%%%%%%%%%%%%%%%%%%%%%%%%%%%%%%%%%%%%%%%%%%%%%%%%%%%%%%%%%%%%%%%%%%%%%%%%%%%%%%%%%%%
\appendix
\section{Proof of Theorem \ref{theorem:twist of quasitri quasi-Hopf}}\label{appendix 1}
Firstly, the proof is similar to Theorem \ref{theorem for new quasitriangular Hopf alg}, but instead of using the definition for Hopf algebra, we use the definition for quasi-Hopf algebra. Secondly, it is important to show that $\phi_\mathcal{T}$ is a 3-cocycle. First, we define the coface maps $\text{d}_i=\textrm{id}_{\mathscr{H}^{\otimes (i-1)}}\otimes \Delta \otimes\textrm{id}_{\mathscr{H}^{\otimes (n-i)}}:\mathscr{H}^{\otimes n} \longrightarrow \mathscr{H}^{\otimes (n+1)}$ for $1\leq i \leq n$, $\text{d}_0=1\otimes\textrm{id}_{\mathscr{H}^{\otimes n}}$, and $\text{d}_{n+1}=\textrm{id}_{\mathscr{H}^{\otimes n}} \otimes 1$. The Majid operators are defined as
    \begin{equation}
        \begin{split}
            \text{d}_+ \mathcal{F}=\mathop \prod \limits_{i{\textrm{ even}}} {\text{d}_i}\mathcal{F}, \qquad \qquad \text{d}_- \mathcal{F}=\mathop \prod \limits_{i{\textrm{ odd}}} {\text{d}_i}\mathcal{F}, \qquad \qquad \forall\mathcal{F} \in \mathscr{H}^{\otimes n},
        \end{split}
    \end{equation}
    If $\mathcal{F}$ is a $n$-cochain, then its coboundary is
    \begin{equation}
        \text{d}\mathcal{F}=(\text{d}_+\mathcal{F})(\text{d}_-\mathcal{F})\qquad \qquad \forall\mathcal{F} \in \mathscr{H}^{\otimes n},
    \end{equation}
    Applying for 3-cocycle $\phi \in \mathscr{H}\otimes \mathscr{H} \otimes \mathscr{H}$, the 3-cocycle condition reads
    \begin{equation}
        (\text{d}_0\phi) (\text{d}_2  \phi)(\text{d}_4  \phi) = (\text{d}_3  \phi)(\text{d}_1  \phi).
    \end{equation}
    Now we define the twisted 3-cocycle $\phi_\mathcal{T} \in \mathscr{H}_\mathcal{T}\otimes \mathscr{H}_\mathcal{T}\otimes \mathscr{H}_\mathcal{T}$ as
    \begin{equation}
        \phi_\mathcal{T}= (\text{d}_+ \mathcal{T})\phi (\text{d}_-\mathcal{T}^{-1}).
    \end{equation}
    The 3-cocycle condition for $\phi_\mathcal{T}$ is given by $(\text{d}_0^{\mathcal{T}}\phi) (\text{d}_2^{\mathcal{T}}  \phi)(\text{d}_4^{\mathcal{T}}  \phi) = (\text{d}_3^{\mathcal{T}}  \phi)(\text{d}_1^{\mathcal{T}}  \phi)$, where $d^\mathcal{T}_i =\textrm{id}_{\mathscr{H}^{\otimes (i-1)}}\otimes \Delta_\mathcal{T} \otimes\textrm{id}_{\mathscr{H}^{\otimes (n-i)}}:\mathscr{H}_\mathcal{T}^{\otimes n} \longrightarrow \mathscr{H}_\mathcal{T}^{\otimes (n+1)} $ (replace $\Delta$ by $\Delta_\mathcal{T}$). We will show $\phi_\mathcal{T}$ satisfies this condition. We have
    \begin{equation}
        \begin{split}
            \left(\text{d}^\mathcal{T}_0 \phi_\mathcal{T}\right)\left(\text{d}^\mathcal{T}_2 \phi_\mathcal{T}\right)\left(\text{d}^\mathcal{T}_4 \phi_\mathcal{T}\right)&=\left[\left(\text{d}_0\text{d}_0 \mathcal{T}\right)\left(\text{d}_0\text{d}_2 \mathcal{T}\right)\left(\text{d}_0 \phi\right)\left(\text{d}_0\text{d}_1 \mathcal{T}^{-1}\right)\left(\text{d}_0\text{d}_3 \mathcal{T}^{-1}\right)\right]\\
            &\qquad \left[\mathcal{T}_{23}\left(\text{d}_2\text{d}_0 \mathcal{T}\right)\left(\text{d}_2\text{d}_2 \mathcal{T}\right)\left(\text{d}_2 \phi\right)\left(\text{d}_2\text{d}_1 \mathcal{T}^{-1}\right)\left(\text{d}_2\text{d}_3 \mathcal{T}^{-1}\right)\mathcal{T}_{23}^{-1}\right]\\
            &\qquad\qquad \left[ \left(\text{d}_4\text{d}_0 \mathcal{T}\right)\left(\text{d}_4\text{d}_2 \mathcal{T}\right)\left(\text{d}_4\phi\right)\left(\text{d}_2\text{d}_3 \mathcal{T}^{-1}\right)\left(\text{d}_4\text{d}_1 \mathcal{T}^{-1}\right)\left(\text{d}_4\text{d}_3 \mathcal{T}^{-1}\right) \right]\\
            &=\left(\text{d}_0\text{d}_0 \mathcal{T}\right)\left(\text{d}_0\text{d}_2 \mathcal{T}\right)\left(\text{d}_0 \phi\right)\left(\text{d}_2\text{d}_2 \mathcal{T}\right)\left(\text{d}_2 \phi\right)\left(\text{d}_2\text{d}_1 \mathcal{T}^{-1}\right)
            \left(\text{d}_4\phi\right)\\
            &\qquad\qquad\left(\text{d}_2\text{d}_3 \mathcal{T}^{-1}\right)\left(\text{d}_4\text{d}_1 \mathcal{T}^{-1}\right)\left(\text{d}_4\text{d}_3 \mathcal{T}^{-1}\right)\\
            &=\left(\text{d}_0\text{d}_0 \mathcal{T}\right)\left(\text{d}_0\text{d}_2 \mathcal{T}\right)\left(\text{d}_3\text{d}_2 \mathcal{T}\right)\left(\text{d}_0\phi\right)\left(\text{d}_2\phi\right)\left(\text{d}_4\phi\right)\\
            &\qquad\qquad\left(\text{d}_1\text{d}_1 \mathcal{T}^{-1}\right)\left(\text{d}_4\text{d}_1 \mathcal{T}^{-1}\right)\left(\text{d}_4\text{d}_3 \mathcal{T}^{-1}\right)\\
            &=\mathcal{T}_{34}\left(\text{d}_3\text{d}_0 \mathcal{T}\right)\left(\text{d}_3\text{d}_2 \mathcal{T}\right)\left(\text{d}_3\phi\right)\left(\text{d}_3\text{d}_1 \mathcal{T}^{-1}\right)\left(\text{d}_3\text{d}_3 \mathcal{T}^{-1}\right)\mathcal{T}_{34}^{-1}\\
            &\qquad \mathcal{T}_{12}\left(\text{d}_1\text{d}_0 \mathcal{T}\right)\left(\text{d}_1\text{d}_2 \mathcal{T}\right)\left(\text{d}_1\phi\right)\left(\text{d}_1\text{d}_1 \mathcal{T}^{-1}\right)\left(\text{d}_1\text{d}_3 \mathcal{T}^{-1}\right)\mathcal{T}_{12}^{-1}\\
            &=\left(\text{d}_3^\mathcal{T}\phi_\mathcal{T}\right)\left(\text{d}_1^\mathcal{T}\phi_\mathcal{T}\right).
        \end{split}
    \end{equation}
    Moreover, the 3-cocycle $\phi_\mathcal{T}$ can also be written in the form of
    \begin{equation}
        \phi_\mathcal{T} = \left(1\otimes \mathcal{T}\right)\left((\textrm{id}\otimes\Delta) \mathcal{T}\right)\phi \left((\Delta \otimes \textrm{id})\mathcal{T}^{-1}\right)\left(\mathcal{T}^{-1}\otimes 1\right),
    \end{equation}
    due to the relation $\Delta_\mathcal{T}(a)=\mathcal{T}\Delta(a)\mathcal{T}^{-1}$. Thus, $\phi_\mathcal{T}$ is an invertible 3-cocyle and counital if $\phi$ and $\mathcal{T}$ are.

\section{Proof of Proposition \ref{prop:gauge transformation}}\label{appendix 2}
We can distinguish the underlying Hopf algebra and the cochain twist in a quasitriangular quasi-Hopf algebra as $\mathscr{H}_{i}=\left(\left[\mathscr{H}_i\right],\mathcal{T}_i\right)$. The 1-arrow $g_{ij}=\left(G_{ij},T_{ij}\right)$ satisfies $\varepsilon_i\circ g_{ij}=\varepsilon_j$, and the transition on any double overlap $U_i \cap U_j$ is given by
\begin{equation}
     \begin{array}{rccc}
G_{ij}: & \left[\mathscr{H}_j\right]& \mathop{\longrightarrow}\limits^{\cong}  & \left[\mathscr{H}_i\right], \\
T_{ij} :  & \mathcal{T}_j & \longrightarrow  & \mathcal{T}_i,
\end{array}
\end{equation}
where $G_{ij}$ is invertible and satisfies the homomorphism algebra, $G_{ij}(1_j)=1_i$, $G_{ij}(ab)=G_{ij}(a)G_{ij}(b)$ for all $a,b\in \left[\mathscr{H}_j\right]$. The twist transformation $T_{ij}=\mathcal{T}_i \left(G_{ij}\otimes G_{ij}\right)\mathcal{T}_j^{-1}$ is also invertible because $\mathcal{T}$ and $G_{ij}$ is invertible. The gauge transformation $G_{ij}$ preserves the untwisted coproduct $\Delta$ (see \cref{fig:gauge trans preserve coproduct}), which implies
\begin{equation}\label{eq:gauge trans preserve coproduct}
    \left(G_{ij}\otimes G_{ij}\right)\Delta(h) = \Delta\left(G_{ij}(h)\right), \qquad \qquad \forall h \in \left[\mathscr{H}_j\right].
\end{equation}

To see how the transition between two quasitriangular quasi-Hopf algebras on a double overlap induces the braided monoidal 2-equivalence, we need to determine the action of $g_{ij}$ on the twisted structures, such as the twisted coproduct, twisted $\mathcal{R}$-matrix, and twisted 3-coboundary. We denote the twisted coproduct, twisted $\mathcal{R}$-matrix, and the twisted 3-coboundary of the quasitriangular quasi-Hopf algebra $\mathscr{H}_{U_i}$ as $\Delta_i$, $\mathcal{R}_i$, and $\phi_i$, respectively. First, for any $h\in \left[\mathscr{H}_i\right]$, we introduce the image of $\Delta_j$ under the gauge transformation, which is given by
\begin{equation}
    \Delta_j^{G_{ij}}(h) = \left(G_{ij}\otimes G_{ij}\right)\Delta_j\left(G_{ij}^{-1}(h)\right) \in \left[\mathscr{H}_i\right] \otimes \left[\mathscr{H}_i\right].
\end{equation}
It is important to notice that $\Delta_j^{G_{ij}}$ does not coincide with $\Delta_i$, in fact, they are different up to an isomorphism. By using identity (\ref{eq:gauge trans preserve coproduct}) and the definition of $T_{ij}$, we have
\begin{equation}
    \begin{split}
        \Delta_j^{G_{ij}}(h)&= \left(G_{ij}\otimes G_{ij}\right)\left(\mathcal{T}_j\Delta\left(G_{ij}^{-1}(h)\right) \mathcal{T}_j^{-1}\right) \\
        &= \left(G_{ij}\otimes G_{ij}\right)(\mathcal{T}_j)\Delta(h)  \left(G_{ij}\otimes G_{ij}\right)(\mathcal{T}_j^{-1})\\
        &=T_{ij}^{-1}\mathcal{T}_i \Delta(h) \mathcal{T}_i^{-1} T_{ij}\\
        &=T_{ij}^{-1} \Delta_i(h)  T_{ij},
    \end{split}
\end{equation}
which implies $\Delta_i(h) = T_{ij}\Delta_j^{G_{ij}}(h)T_{ij}^{-1}$ and the diagram in \cref{fig:transition of twisted coproduct} commute. Thus, one can perform the transition of a twisted operator on a double overlap by applying the gauge transformation, then replacing the twist by the twist transformation. For example, one can compute the 3-coboundary of $\mathscr{H}_{U_i}$
\begin{equation}
     \phi_i =\text{d}\mathcal{T}_i= \left(1\otimes T_{ij}\right)\left[\left(\textrm{id}\otimes\Delta_j^{G_{ij}}\right) T_{ij}\right]\phi_j^{G_{ij}} \left[\left(\Delta_j^{G_{ij}} \otimes \textrm{id}\right)T_{ij}^{-1}\right]\left(T_{ij}^{-1}\otimes 1\right),
\end{equation}
where $ \phi_j^{G_{ij}} = \left(G_{ij}\otimes G_{ij}\otimes G_{ij}\right)\phi_j\in \left[\mathscr{H}_i\right]\otimes \left[\mathscr{H}_i\right]\otimes\left[\mathscr{H}_i\right]$. One can also compute the $\mathcal{R}$-matrix of $\mathscr{H}_{U_i}$, which is given by
\begin{equation}
    \mathcal{R}_i = T_{ij,21}\mathcal{R}_j^{G_{ij}}T_{ij}^{-1}= T_{ij,21}\left[\left( G_{ij}\otimes G_{ij}(\mathcal{T}_j) \right)_{21}\left( G_{ij}\otimes G_{ij}(\mathcal{T}_j) \right)^{-1}\right]T_{ij}^{-1}.
\end{equation}
The 1-arrow $g_{ij}$ is also compatible with the tensor structure. For any objects $V,W$ in $\text{BRep}(\mathscr{H}_{U_j})$, the transformation of tensor structure is
\begin{equation}
    \begin{array}{rccc}
J_{V,W}^{ij}: & g_{ij}(V)\otimes_i g_{ij}(W) & \longrightarrow & g_{ij}(V\otimes_j W) \\
   & v\otimes_i w & \longmapsto  & T_{ij}^{-1} \triangleright(v\otimes_j w).
\end{array}
\end{equation}
The 1-arrow is compatible not only with the tensor structure but also with the normalization condition. Using identity $\varepsilon_i \circ g_{ij}=\varepsilon_j$ and the normalization condition in (\ref{eq1.lemma1}), one obtains the normalization condition of twist transformation
\begin{equation}
    (\varepsilon_i \otimes \text{id})T_{ij} = (\text{id} \otimes \varepsilon_i)T_{ij}= 1.
\end{equation}
Therefore, the pair of gauge transformation and twist transformation induces a braided monoidal equivalence $g_{ij}$ between $\text{BRep}(\mathscr{H}_{U_i}|_{U_{ij}})$ and $\text{BRep}(\mathscr{H}_{U_j}|_{U_{ij}})$.

%%%%%%%%%%%%%%%%%%%%%%%%%%%%%%%%%%%%%%%%%%%%%%%%%%%%%%%%%%%%%%%%%%%%%%%%%%%%%%%%%%%
\section{Proof of Theorem \ref{theorem:stackification}}\label{appendix 3}
We prove the statement by constructing an explicit pseudoinverse to $\text{res}_{\mathcal{U}}$. The proof has four steps:
\begin{enumerate}
    \item Proving $\text{res}_{\mathcal{U}}:\mathscr{P}^+(U)  \longrightarrow \text{Des}_{\mathscr{P}^+}(\mathcal{U})$ is a pseudofunctor,
    \item Proving $\text{flat}_{\mathcal{U}}: \text{Des}_{\mathscr{P}^+}(\mathcal{U}) \longrightarrow \mathscr{P}^+(U) $ is a pseudofunctor,
    \item Proving $ \text{flat}_{\mathcal{U}}\circ \text{res}_{\mathcal{U}}
\cong
\text{id}_{\mathscr{P}^+(U)}$,
\item Proving $\text{res}_{\mathcal{U}}\circ \text{flat}_{\mathcal{U}}
\cong\text{id}_{\text{Des}{\mathscr{P}^+}(\mathcal{U})}$.
\end{enumerate}

\subsection{Proving $\text{res}_{\mathcal{U}}:\mathscr{P}^+(U)  \longrightarrow \text{Des}_{\mathscr{P}^+}(\mathcal{U})$ is a pseudofunctor}
Let $\mathcal A=
\left[ \mathcal V,\mathcal C_a,g_{ab},\alpha_{abc}
\right]_{\mathrm{ref},2\mathrm{eq}} \in \mathscr P^+(U) $ be represented by a $\mathscr P$-descent datum $ (\mathcal C_a,g_{ab},\alpha_{abc}) \in \text{Des}_{\mathscr P}(\mathcal V) $ on an admissible cover $\mathcal V=\{V_a\to U\}_{a\in A}$. For every member $U_i$ of the fixed cover $ \mathcal U=\{U_i\to U\}_{i\in I}, $ we define $x_i:=\mathcal A|_{U_i}=\left[\mathcal V|_{U_i},(\mathcal C_a,g_{ab},\alpha_{abc})|_{U_i}\right]_{\mathrm{ref},2\mathrm{eq}}
\in \mathscr P^+(U_i)$. Here $\mathcal V|_{U_i}=\{V_a\cap U_i\to U_i\}_{a\in A}$ is understood after replacing it by an admissible refinement if necessary. On the double overlap $U_{ij}=U_i\cap U_j$, we have $ x_i|_{U_{ij}} = \left[ \mathcal V|_{U_{ij}}, (\mathcal C_a,g_{ab},\alpha_{abc})|_{U_{ij}} \right]_{\mathrm{ref},2\mathrm{eq}}, $ and similarly $ x_j|_{U_{ij}} = \left[
\mathcal V|_{U_{ij}},
(\mathcal C_a,g_{ab},\alpha_{abc})|_{U_{ij}}
\right]_{\mathrm{ref},2\mathrm{eq}}$. Thus both restrictions are represented by the same $\mathscr P$-descent datum on the same restricted cover. Hence there is a canonical equivalence $ f_{ij}:x_j|_{U_{ij}}\longrightarrow x_i|_{U_{ij}}, $ represented by the identity $1$-morphism of $ (\mathcal C_a,g_{ab},\alpha_{abc})|_{U_{ij}} \in \text{Des}_{\mathscr P}(\mathcal V|_{U_{ij}})$. On the triple overlap $U_{ijk}=U_i\cap U_j\cap U_k$, the two composites $f_{ij}f_{jk}\qquad\text{and}\qquad f_{ik}$ are both represented by the identity $1$-morphism of the same restricted $\mathscr P$-descent datum $(\mathcal C_a,g_{ab},\alpha_{abc})|_{U_{ijk}}$. Therefore there is a canonical invertible $2$-morphism $\beta_{ijk}:f_{ij}f_{jk}\Longrightarrow f_{ik}$. This $2$-morphism is represented by the identity $2$-morphism after restriction to $\mathcal V|_{U_{ijk}}$. The coherence condition on quadruple overlaps is automatic. Indeed, on $U_{ijkl}=U_i\cap U_j\cap U_k\cap U_l$, the two possible composites from $f_{ij}f_{jk}f_{kl}$ to $ f_{il} $ are both represented by the identity $2$-morphism of the same restricted $\mathscr P$-descent datum $ (\mathcal C_a,g_{ab},\alpha_{abc})|_{U_{ijkl}}$. Hence the tetrahedral coherence condition holds. Therefore $
\text{res}_{\mathcal U}(\mathcal A):=(x_i,f_{ij},\beta_{ijk})$ is an object of $\text{Des}_{\mathscr P^+}(\mathcal U)$. The construction is compatible with $1$-morphisms. Let $F:\mathcal A\longrightarrow \mathcal B$ be a $1$-morphism in $\mathscr P^+(U)$. Choose representatives $\mathcal A=[\mathcal V,X]_{\mathrm{ref},2\mathrm{eq}},\mathcal B=[\mathcal V',Y]_{\mathrm{ref},2\mathrm{eq}}$. By the definition of the $2$-colimit, the morphism $F$ is represented, after passing to a common admissible refinement $\mathcal Z$ of $\mathcal V$ and $\mathcal V'$, by a genuine $1$-morphism $\widetilde F: X|_{\mathcal Z} \longrightarrow Y|_{\mathcal Z}$ inside $ \text{Des}_{\mathscr P}(\mathcal Z)$. Restricting this representative to each $U_i$ gives a $1$-morphism $F_i:\mathcal A|_{U_i}\longrightarrow\mathcal B|_{U_i}$ in $\mathscr P^+(U_i)$. On the double overlaps $U_{ij}$, the fact that
$\widetilde F$ is a morphism of $\mathscr P$-descent data gives invertible compatibility $2$-morphisms $\theta_{ij}: s_{ij}F_j \Longrightarrow F_i f_{ij}, $ where $f_{ij}$ and $s_{ij}$ are the canonical transition equivalences of $\text{res}_{\mathcal U}(\mathcal A)$ and $\text{res}_{\mathcal U}(\mathcal B)$, respectively. These compatibility $2$-morphisms satisfy the usual coherence condition on triple overlaps because $\widetilde F$ is a morphism of descent data. Hence $ (F_i,\theta_{ij}) $ is a $1$-morphism in $\text{Des}_{\mathscr P^+}(\mathcal U)$. Similarly, if $\eta:F\Longrightarrow G$ is a $2$-morphism in $\mathscr P^+(U)$, then after passing to a sufficiently fine common admissible refinement it is represented by a genuine $2$-morphism between representatives of $F$ and $G$. Restricting this representative to each $U_i$ gives compatible local $2$-morphisms $\eta_i:F_i\Longrightarrow G_i$. Thus $\eta$ determines a $2$-morphism in $\text{Des}_{\mathscr P^+}(\mathcal U)$. The construction is independent of the chosen representatives and refinements, because any two choices admit a further admissible common refinement, and the definition of $\mathscr P^+$ identifies representatives that agree after such a refinement. Moreover, identities and compositions are preserved up to the canonical coherence isomorphisms coming from restriction and common refinement. Therefore $\text{res}_{\mathcal U}:\mathscr P^+(U)\longrightarrow\text{Des}_{\mathscr P^+}(\mathcal U)$ is a well-defined pseudofunctor.

\subsection{Proving $\text{flat}_{\mathcal{U}}: \text{Des}_{\mathscr{P}^+}(\mathcal{U}) \longrightarrow \mathscr{P}^+(U) $ is a pseudofunctor}
Let $\mathcal X=(x_i,f_{ij},\beta_{ijk})
\in \text{Des}_{\mathscr P^+}(\mathcal U)$ be a $\mathscr P^+$-descent datum with respect to the cover $\mathcal U=\{U_i\to U\}_{i\in I}$. For each $i$, choose a representative of the object $x_i\in \mathscr P^+(U_i)$, say $x_i=\left[
\mathcal V_i,\mathcal C_{i\mu},g_{i\mu\nu}, \alpha_{i\mu\nu\rho}
\right]_{\mathrm{ref},2\mathrm{eq}}$, where $\mathcal V_i=\{V_{i\mu}\to U_i\}_{\mu\in M_i}$ is an admissible cover of $U_i$, and $(\mathcal C_{i\mu},g_{i\mu\nu},\alpha_{i\mu\nu\rho})$ is an object in $\text{Des}_{\mathscr P}(\mathcal V_i)$. Thus, for each $\mu$, $\mathcal C_{i\mu}\in \mathscr P(V_{i\mu})$, for each double overlap $V_{i\mu\nu}=V_{i\mu}\cap V_{i\nu}$, $g_{i\mu\nu}:\mathcal C_{i\nu}|_{V_{i\mu\nu}}\longrightarrow\mathcal C_{i\mu}|_{V_{i\mu\nu}}$, and for each triple overlap there is an invertible $2$-morphism $\alpha_{i\mu\nu\rho}:g_{i\mu\nu}g_{i\nu\rho}\Longrightarrow g_{i\mu\rho}$, satisfying the usual tetrahedral coherence condition on quadruple overlaps. We next represent the transition equivalences of the $\mathscr P^+$-descent datum by genuine morphisms between $\mathscr P$-descent data after refinement. On the double overlap $U_{ij}=U_i\cap U_j$, we have an equivalence $f_{ij}:x_j|_{U_{ij}}\longrightarrow x_i|_{U_{ij}}$ in $\mathscr P^+(U_{ij})$. Since $\mathscr P^+(U_{ij})=\mathop{2\text{-colim}}\limits_{\mathcal T\in\text{ACov}(U_{ij})}\text{Des}_{\mathscr P}(\mathcal T)$, the morphism $f_{ij}$ is represented, after passing to a sufficiently fine common admissible refinement $\mathcal T_{ij}\to \mathcal V_i|_{U_{ij}},\mathcal T_{ij}\to \mathcal V_j|_{U_{ij}}$, by an actual $1$-morphism of $\mathscr P$-descent data $\widetilde f_{ij}:(\mathcal C_{j\nu},g_{j\nu\lambda},\alpha_{j\nu\lambda\kappa})|_{\mathcal T_{ij}}\longrightarrow (\mathcal C_{i\mu},g_{i\mu\nu},\alpha_{i\mu\nu\rho})
|_{\mathcal T_{ij}}$ inside $\text{Des}_{\mathscr P}(\mathcal T_{ij})$. Similarly, on a triple overlap $U_{ijk}=U_i\cap U_j\cap U_k$, the coherence $2$-isomorphism $\beta_{ijk}:f_{ij}f_{jk}\Longrightarrow f_{ik}$ is represented, after passing to a sufficiently fine admissible common refinement $\mathcal T_{ijk}$, by an actual $2$-morphism $\widetilde\beta_{ijk}: \widetilde f_{ij}\widetilde f_{jk} \Longrightarrow \widetilde f_{ik} $ between morphisms of $\mathscr P$-descent data. Since the $\beta_{ijk}$ satisfy the tetrahedral coherence condition in
$\text{Des}_{\mathscr P^+}(\mathcal U)$, we refine once more, if necessary, so that the corresponding equality of $2$-morphisms is represented at the level of $\mathscr P$-descent data. Now choose a single admissible cover $\mathcal{W}=\{W_m\to U\}_{m\in M}$ which is a common refinement of all covers used above. In particular, each
$W_m$ is contained in some $U_{\sigma(m)}$, and $\mathcal{W}$ is chosen so that $\mathcal{W}|_{U_i},\mathcal{W}|_{U_{ij}},\mathcal{W}|_{U_{ijk}}$ are refinements of $\mathcal V_i,\mathcal T_{ij},\mathcal T_{ijk}$, respectively. Here $\sigma(m)\in I$ is a choice of index such that $W_m\subset U_{\sigma(m)}$. We now construct a candidate $\mathscr P$-descent datum $
\mathcal D=(\mathcal D_m,h_{mn},\gamma_{mnl}) $ on the cover $\mathcal{W}$. First, let $W_m\subset U_i$ with $i=\sigma(m)$. Since $\mathcal{W}|_{U_i}$
refines $\mathcal V_i$, there exists some $\mu$ such that
$W_m\subset V_{i\mu}$. Define $\mathcal D_m:=\mathcal C_{i\mu}|_{W_m}\in \mathscr P(W_m)$. Next, let $W_{mn}:=W_m\cap W_n$. Suppose $W_m\subset U_i, W_n\subset U_j$. We define a transition $1$-morphism $h_{mn}:\mathcal D_n|_{W_{mn}}\longrightarrow\mathcal D_m|_{W_{mn}} $ as follows. If $i=j$, then $W_m,W_n\subset U_i$, and $h_{mn}$ is defined as the restriction of the internal transition morphism $g_{i\mu\nu}$ of the local descent datum representing $x_i$. More precisely, if $W_m\subset V_{i\mu}$ and $W_n\subset V_{i\nu}$, set $h_{mn}:=g_{i\mu\nu}|_{W_{mn}}$. If $i\neq j$, then $W_{mn}\subset U_{ij}$, and $h_{mn}$ is defined to be the restriction to $W_{mn}$ of the appropriate local component of the representative $\widetilde f_{ij}:x_j|_{U_{ij}}\longrightarrow x_i|_{U_{ij}}$. Thus $h_{mn}$ is an actual $1$-morphism in $\mathscr P(W_{mn})$ from $\mathcal D_n|_{W_{mn}}$ to $\mathcal D_m|_{W_{mn}}$. Finally, on a triple overlap $W_{mnl}:=W_m\cap W_n\cap W_l$, with $W_m\subset U_i, W_n\subset U_j, W_l\subset U_k$, we define an invertible $2$-morphism $\gamma_{mnl}:h_{mn}h_{nl}\Longrightarrow h_{ml}$. If $i=j=k$, then all three local pieces lie inside the same $U_i$, and $\gamma_{mnl}$ is the restriction of the internal coherence $\alpha_{i\mu\nu\rho}:g_{i\mu\nu}g_{i\nu\rho}\Longrightarrow g_{i\mu\rho}$ of the descent datum representing $x_i$. If the indices are not all equal, then $\gamma_{mnl}$ is obtained from the appropriate local component of the representative $\widetilde\beta_{ijk}:\widetilde f_{ij}\widetilde f_{jk}\Longrightarrow\widetilde f_{ik}$, together with the compatibility $2$-morphisms expressing that each $\widetilde f_{ij}$ is a morphism of $\mathscr P$-descent data. It remains to verify that the candidate data $(\mathcal D_m,h_{mn},\gamma_{mnl})$ satisfy the descent coherence condition. On a quadruple overlap $W_{mnlq}:=W_m\cap W_n\cap W_l\cap W_q$, there are two composites from $h_{mn}h_{nl}h_{lq}$ to $h_{mq}$. If all four pieces lie inside the same $U_i$, then the equality of these two composites is exactly the tetrahedral coherence condition for the $\mathscr P$-descent datum representing $x_i$. In the mixed case, the two composites are the restrictions of the two sides of the tetrahedral coherence condition for the $\mathscr P^+$-descent datum $(x_i,f_{ij},\beta_{ijk})$. Since equality of $2$-morphisms in $\mathscr P^+$ is defined after passing to a sufficiently fine common refinement, and since $\mathcal{W}$ was chosen fine enough to represent this equality, the two composites agree on $W_{mnlq}$. Therefore, $(\mathcal D_m,h_{mn},\gamma_{mnl})\in \text{Obj}(\text{Des}_{\mathscr P}(\mathcal{W}))$. We define
\begin{equation}
\text{flat}_{\mathcal U}(\mathcal X) :=\left[\mathcal{W},\mathcal D_m,h_{mn},\gamma_{mnl}\right]_{\mathrm{ref},2\mathrm{eq}}\in \mathscr P^+(U).
\end{equation}
This definition is independent of the choices of representatives and refinements. Indeed, any two such choices admit a further admissible common refinement, and on that refinement the two flattened $\mathscr P$-descent data are equivalent. Hence they define the same object of $\mathscr P^+(U)$ by the definition of the $2$-colimit. The same construction applies to $1$ morphisms and $2$-morphisms. A $1$-morphism in $\text{Des}_{\mathscr P^+}(\mathcal U)$ is represented, after passing to suitable admissible refinements, by local morphisms between the chosen representatives of the $x_i$, compatible with the transition equivalences $f_{ij}$. Flattening these local representatives gives a $1$-morphism in $\mathscr P^+(U)$. Similarly, compatible local $2$-morphisms flatten to $2$-morphisms in $\mathscr P^+(U)$. The compatibility with identities and compositions follows from the compatibility of restriction with common refinement in the defining $2$-colimit. Therefore $\text{flat}_{\mathcal U}:\text{Des}_{\mathscr P^+}(\mathcal U)\longrightarrow\mathscr P^+(U)$ is a well-defined pseudofunctor.

\subsection{Proving $ \text{flat}_{\mathcal{U}}\circ \text{res}_{\mathcal{U}}
\cong \text{id}_{\mathscr{P}^+(U)}$}
Let $\mathcal{M}$ be an arbitrary object in $\mathscr{P}^+(U)$. Restricting $\mathcal{M}$ to the cover $\mathcal{U}$ gives the descent datum $\text{res}_\mathcal{U}(\mathcal{M})=(\mathcal{M}|_{U_i},f_{ij},\beta_{ijk})$. Flattening this datum amounts to restricting the original presentation of $\mathcal{M}$ to the common refinement of $\mathcal{V}$ and $\mathcal{U}$, for instance, to the cover by the nonempty intersections $V_a\cap U_i$, with further admissible refinement if needed. Thus, $\text{flat}_{\mathcal{U}}\text{res}_{\mathcal{U}}(\mathcal{M})$ is represented by the pullback of the original descent representation of $\mathcal{M}$ to a refinement. Since $\mathscr{P}^+(U)$ is defined by quotienting descent representations by refinement and $2$-equivalence, this pullback represents the same object as $\mathcal{M}$. This equivalence is obtained simply by gluing the identity equivalences on the local restrictions $\mathcal{M}|_{U_i}$ without any additional choice being required. Hence, there is a canonical equivalence
\begin{equation}
\varepsilon_{\mathcal{M}}:
\text{flat}_{\mathcal{U}}\text{res}_{\mathcal{U}}(\mathcal{M})
\mathop{\longrightarrow}\limits^{\cong}
\mathcal{M}.
\end{equation}
The same argument applies to $1$-morphisms and $2$-morphisms, because restriction followed by flattening only replaces representatives by their pullbacks to a common refinement. Therefore, the equivalences $\varepsilon_{\mathcal{M}}$ assemble into a pseudonatural equivalence
\begin{equation}
\text{flat}_{\mathcal{U}}\circ \text{res}_{\mathcal{U}}
\cong
\text{id}_{\mathscr{P}^+(U)}.
\end{equation}

\subsection{Proving $\text{res}_{\mathcal{U}}\circ \text{flat}_{\mathcal{U}}
\cong\text{id}_{\text{Des}{\mathscr{P}^+}(\mathcal{U})}$}
Let $\mathcal{Y}=(x_i,f_{ij},\beta_{ijk})\in\text{Des}_{\mathscr{P}^+}(\mathcal{U})$. By construction, $\text{flat}_{\mathcal{U}}(\mathcal{Y})$ is obtained by gluing representatives of the local objects $x_i$ using the transition equivalences $f_{ij}$ and the coherences $\beta_{ijk}$. Let $\mathcal{N}:=\text{flat}_{\mathcal{U}}(\mathcal{Y})\in \mathscr{P}^+(U)$. Restricting $\mathcal{N}$ back to $U_i$ gives an object $\mathcal{N}|_{U_i}\in \mathscr{P}^+(U_i)$. Since $\mathcal{N}$ was constructed by gluing the representatives of $x_i$, the object $\mathcal{N}|_{U_i}$ is equivalent to $x_i$, possibly after passing to an admissible refinement. Thus, there are 1-equivalences
\begin{equation}
e_i:\mathcal{N}|_{U_i}\longrightarrow x_i, \qquad \alpha_{ij}:e_i\circ g_{ij} \Longrightarrow f_{ij}\circ e_j
\end{equation}
in $\mathscr{P}^+(U_i)$. On double overlaps $U_{ij}$, these equivalences are compatible with the transition equivalences of $\text{res}_{\mathcal{U}}(\mathcal{N})$ and the given transition equivalences $f_{ij}$, precisely because the transitions of $\mathcal{N}$ were constructed from the $f_{ij}$. On triple overlaps, the compatibility with coherence follows from the fact that the coherences of $\mathcal{N}$ were constructed from the $\beta_{ijk}$. Let $\text{res}_\mathcal{U}(\mathcal{N})=\left(\mathcal{N}|_{U_i},g_{ij},\gamma_{ijk}\right)$, the following diagrams commute.

\begin{figure}[H]
\centering
\begin{minipage}{0.30\textwidth}
\centering
\begin{tikzpicture}[    
node distance=2.0cm and 2.4cm,     
obj/.style={font=\large},     
arrowlabel/.style={font=\small,inner sep=1pt},     
arrow/.style={-{To[length=7pt,width=9pt]},line width=0.6pt},twoarrow/.style={-{Implies[length=7pt,width=9pt]},double,double distance=1.2pt,line width=0.4pt}]

\node[obj] (A) at (0,0) {$\mathcal{N}|_{U_j}|_{U_{ij}}$};

\node[obj] (B) at (0,-2.0) {$\mathcal{N}|_{U_i}|_{U_{ij}}$};

\node[obj] (C) at (3.5,0) {$x_j|_{U_{ij}}$};

\node[obj] (D) at (3.5,-2.0) {$x_i|_{U_{ij}}$};

\node[obj] (E) at (2.7,-0.5) {};

\node[obj] (F) at (0.8,-1.5) {};

\draw[arrow] (A) -- node[left, arrowlabel] {$g_{ij}$} (B);

\draw[arrow] (A) -- node[above, arrowlabel] {$e_j$} (C);

\draw[arrow] (B) -- node[below, arrowlabel] {$e_i$} (D);

\draw[arrow] (C) -- node[right, arrowlabel] {$f_{ij}$} (D);

\draw[twoarrow] (F) -- node[below right, arrowlabel] {$\alpha_{ij}$} (E);

\end{tikzpicture}
\captionsetup{
    width=6cm,
    singlelinecheck=false
}
\end{minipage}
\hfill
\begin{minipage}{0.55\textwidth}
\centering
\begin{tikzpicture}[    
node distance=2.0cm and 2.4cm,     
obj/.style={font=\large},     
arrowlabel/.style={font=\small,inner sep=1pt},     
arrow/.style={-{To[length=7pt,width=9pt]},line width=0.6pt} ]

\node[obj](A) at (0,0) {$e_i\circ g_{ij}\circ g_{jk}$};

\node[obj](B) at (2.4,1.6) {$e_i\circ g_{ik}$};

\node[obj](C) at (4.8,0) {$f_{ik}\circ e_k$};

\node[obj](D) at (0,-1.8) {$f_{ij}\circ e_j\circ g_{jk}$};

\node[obj](E) at (4.8,-1.8) {$f_{ij}\circ f_{jk}\circ e_k$};

\draw[arrow] (A) -- node[above left, arrowlabel] {$1*\gamma_{ijk}$} (B);

\draw[arrow] (B) -- node[above right, arrowlabel] {$\alpha_{ik}$} (C);

\draw[arrow] (A) -- node[left, arrowlabel] {$\alpha_{ij}*1$} (D);

\draw[arrow] (D) -- node[below, arrowlabel] {$1*\alpha_{jk}$} (E);

\draw[arrow] (E) -- node[right, arrowlabel] {$\beta_{ijk}*1$} (C);

\end{tikzpicture}

\captionsetup{
    width=7.7cm,
    singlelinecheck=false
}
\end{minipage}
\end{figure}

\noindent Therefore, the family $(e_i)$ defines a $1$-equivalence in the bicategory $\text{Des}_{\mathscr{P}^+}(\mathcal{U})$:
\begin{equation}
\eta_{\mathcal{Y}}:
\text{res}_{\mathcal{U}}\text{flat}_{\mathcal{U}}(\mathcal{Y})
\longrightarrow
\mathcal{Y}.
\end{equation}
Again, the same reasoning applies to $1$-morphisms and $2$-morphisms. Hence the equivalences $\eta_{\mathcal{Y}}$ assemble into a pseudonatural equivalence
\begin{equation}
\text{res}_{\mathcal{U}}\circ \text{flat}_{\mathcal{U}}
\cong
\text{id}_{\text{Des}{\mathscr{P}^+}(\mathcal{U})}.
\end{equation}

In summary, we have constructed pseudofunctors
\begin{equation}
\text{res}_{\mathcal{U}}:
\mathscr{P}^+(U)
\longrightarrow
\text{Des}_{\mathscr{P}^+}(\mathcal{U}), \qquad \text{flat}_{\mathcal{U}}:
\text{Des}_{\mathscr{P}^+}(\mathcal{U})
\longrightarrow
\mathscr{P}^+(U),
\end{equation}
which are inverse to each other up to pseudonatural equivalence
\begin{equation}
    \text{flat}_{\mathcal{U}}\circ \text{res}_{\mathcal{U}}
\cong
\text{id}_{\mathscr{P}^+(U)}, \qquad \qquad \text{res}_{\mathcal{U}}\circ \text{flat}_{\mathcal{U}}
\cong
\text{id}_{\text{Des}{\mathscr{P}^+}(\mathcal{U})}.
\end{equation}
This proves $\text{res}_{\mathcal{U}}$ is a biequivalence
\begin{equation}
\text{res}_{\mathcal{U}}:
\mathscr{P}^+(U)
\mathop{\longrightarrow}\limits^{\cong}
\text{Des}_{\mathscr{P}^+}(\mathcal{U}).
\end{equation}
Since this holds for every cover $\mathcal{U}=\{U_i\to U\}_{i\in I}$, $\mathscr{P}^+$ satisfies effective 2-descent as a stackified prestack. Therefore, 
\begin{equation}
        \begin{array}{rccl}
\mathscr{P}^+: & \mathscr{S}_{\text{all}}^{\text{op}}& \longrightarrow & \textbf{BCatRe}, \\
 & U & \longmapsto  & \left\{\begin{array}{cl}
      \text{BRep}(\mathscr{H}_U)&\text{if } U \in \text{Loc}\mathcal{M},  \\
      \mathop{2\text{-colim}}\limits_{\mathcal{U}\in\text{ACov}(U)}{\text{Des}_\mathscr{P}(\mathcal{U})}&\text{if } U \in \text{Open}\mathcal{M}\backslash\text{Loc}\mathcal{M}
 \end{array}\right.
\end{array}
    \end{equation}
is a bicategory-valued stack.

\vspace{1cm}
\bibliographystyle{alpha-fr}
\bibliography{ref.bib}

 \end{document}